\tikzset{
op/.style = {shape=rectangle, rounded corners, draw=black, align=center,
fill=blue!30},base align tikzpicture/.style={execute at end picture={
\path (current bounding box.north) -- (current bounding box.south)
node[midway](Xphantom){\phantom{X}};},baseline={(Xphantom.base)}}
        }
\newcommand{\Z}{\mathbb{Z}} 
\newcommand{\PP}{\mathcal{P}}
\newcommand{\N}{\mathbb{N}}
\newcommand{\CC}{\mathcal{C}}
\newcommand{\DD}{\mathcal{D}}
\newcommand{\MM}{\mathcal{M}}
\newcommand{\m}{\mathscr{M}}
\newcommand{\OO}{\mathscr{O}_{p, q}(\normalfont\text{SL}_{2}(k))}
\DeclareMathOperator{\Hom}{Hom}
\DeclareMathOperator{\id}{id}
\DeclareMathOperator{\e}{Ext}
\DeclareMathOperator{\pd}{pd}
\DeclareMathOperator{\lgd}{l.gldim}
\DeclareMathOperator{\rgd}{r.gldim}
\DeclareMathOperator{\gd}{gldim}
\DeclareMathOperator{\cd}{cd}
\DeclareMathOperator{\ext}{Ext}
\newtheorem{theorem}{Theorem }[section]
\newcommand{\bt}{\begin{theorem}}
\newcommand{\et}{\end{theorem}}
\definecolor{navajowhite}         {rgb}{1.00,0.87,0.68}
\newtheorem{prop}[theorem]{Proposition}
\newcommand{\bp}{\begin{prop}}
\newcommand{\ep}{\end{prop}}
\newtheorem{dep}[theorem]{Proposition-Definition}
\newcommand{\bdep}{\begin{dep}}
\newcommand{\edep}{\end{dep}}
\newtheorem{cor}[theorem]{Corollary}
\newcommand{\bc}{\begin{cor}}
\newcommand{\ec}{\end{cor}}
\definecolor{babyblueeyes}{rgb}    {0.63, 0.79, 0.95}
\newtheorem{lemma}[theorem]{Lemma}
\newcommand{\bl}{\begin{lemma}}
\newcommand{\el}{\end{lemma}}
\theoremstyle{definition}
\newtheorem{dfn}[theorem]{Definition}
\newcommand{\bd}{\begin{dfn}}
\newcommand{\ed}{\end{dfn}}
\theoremstyle{remark}
\newtheorem{rmq}[theorem]{Remark}
\newcommand{\brq}{\begin{rmq}}
\newcommand{\erq}{\end{rmq}}
\theoremstyle{definition}
\newtheorem{ex}[theorem]{Example}
\newcommand{\bex}{\begin{ex}}
\newcommand{\eex}{\end{ex}}
\newcommand{\bpf}{\begin{proof}}
\newcommand{\epf}{\end{proof}}
\newcommand{\smallbraid}[2]{\draw (#1,#2) ..controls +(0,-0.5) and
+(0,0.5).. (#1-1,#2-1);
\draw[white][line width = 5pt] (#1-1,#2) ..controls +(0,-0.5) and
+(0,0.5).. (#1,#2-1);
\draw (#1-1,#2) ..controls +(0,-0.5) and +(0,0.5).. (#1,#2-1);}
\newcommand{\produit}[3]{\draw (#1,#2) arc (180:360:#3); \draw(#1 + #3, #2 -#3)--(#1 + #3, #2 -#3 -0.5);}
 \newcommand{\coproduit}[3]{\draw (#1,#2) arc (0:180:#3); \draw(#1 - #3, #2 +#3 + 0.5)--(#1 - #3, #2 + #3);}
 \newcommand{\gauche}[2]{\draw (#1,#2)..controls+(0,-1)and+(1,0)..(#1-1,#2-1);
     
         \draw (#1-2,#2)..controls+(0,-1)and+(-1,0)..(#1-1,#2-1); \draw(#1-1.5, #2 - 1)--(#1-1.5, #2 - 1.5);}
   \newcommand{\droite}[2]{\draw (#1,#2)..controls+(0,-1)and+(1,0)..(#1-1,#2-1);
       \draw (#1-2,#2)..controls+(0,-1)and+(-1,0)..(#1-1,#2-1); \draw(#1-0.5, #2 - 1)--(#1-0.5, #2 - 1.5);}
\title{Cohomological dimension of braided Hopf algebras}
\author{Julien Bichon}
\address{Universit\'e Clermont Auvergne, CNRS, LMBP, F-63000 CLERMONT-FERRAND, FRANCE}
\email{julien.bichon@uca.fr}
\author{Thi Hoa Emilie Nguyen}
\email{thi\_hoa.nguyen@uca.fr}
\begin{document}

\begin{abstract}
We show that for a braided Hopf algebra in the category of comodules
over a cosemisimple coquasitriangular Hopf algebra, the Hochschild cohomological
dimension, the left and right global dimensions and the projective
dimensions of the trivial left and right module all coincide. We also provide convenient criteria for smoothness and the twisted Calabi-Yau property for such braided Hopf algebras (without the cosemisimplicity assumption on $H$), in terms of properties of the trivial module.
These generalize a well-known result in the case of ordinary Hopf algebras. 
As an illustration, we study the case of the coordinate algebra on the
two-parameter braided quantum group $\textrm{SL}_{2}$.
\end{abstract}

\maketitle

\section{Introduction}
The global dimension is an important homological invariant of an algebra, which most often serves as a good analogue of the dimension of a smooth affine algebraic variety. However, there are some examples where the global dimension does not match with geometric intuition. Consider for example the first Weyl algebra $A_1(k) =k\langle x, y \ | \ xy-yx=1 \rangle$. If the base field $k$ has characteristic zero, then $\gd(A_1(k))= 1$ \cite{rin62}, while $A_1(k)$, being a filtered deformation of the polynomial algebra $k[x,y]$, should be an  object of dimension $2$. This often leads us to consider the Hochschild cohomological dimension rather than the global dimension.  Recall that for an algebra $A$, the Hochschild  cohomological dimension $\cd(A)$ is defined to be the projective dimension of $A$ in the category of $A$-bimodules. The (left or right) global dimension of $A$ is always smaller than $\cd(A)$, while they coincide in the important case of the coordinate algebra on a smooth affine variety, and for the Weyl algebra one has $\cd(A_1(k)) =2$, as expected \cite{Sri}.

It is thus a natural and important question to determine classes of algebras for which the global dimension and the Hochschild cohomological dimension coincide. Among such classes that are known, let us mention two important ones.
\begin{enumerate}
 \item If $A$ is a graded connected algebra, we have $\cd(A)=\gd(A)$, and these coincide with the projective dimensions of the trivial left and right $A$-modules. See \cite{Ber05}.
\item If $A$ is a Hopf algebra, we have $\cd(A)=\gd(A)$, and these coincide with the projective dimensions of the trivial left and right $A$-modules. This follows from \cite[Proposition 5.6]{GK93}, see the appendix in \cite{WYZ17}.
\end{enumerate}

In this paper we enlarge this list by generalizing the Hopf algebra case to a class of braided Hopf algebras.
Recall \cite{Maj93} that a braided Hopf algebra is a Hopf algebra in a braided category. Braided Hopf algebras  generalize ordinary Hopf algebras, providing a wider theory of quantum symmetries. They are also very useful, through the bosonization construction \cite{Rad85,Maj94}, in studying certain classes of usual Hopf algebras themselves, see \cite{heckenberger2020hopf}.  

The primary objective in  this paper is to extend a range of homological properties observed in ordinary Hopf algebras to the case of braided Hopf algebras. 
In particular, our main result (Theorem \ref{thm:equaldimen}) is that if $A$ is  a Hopf algebra in the braided  category of comodules over a cosemisimple coquasitriangular Hopf algebra $H$, then we have $\cd(A)=\lgd(A)=\rgd(A)$, and these coincide with the projective dimensions of the trivial left and right $A$-modules. Our strategy is to extend \cite[Proposition 5.6]{GK93} to a general braided context (Corollary \ref{cor:equalpd}) and then to use comparison results for various projective dimensions in the setting of separable functors \cite{na1989separable}.  

We then study some more subtle homological properties for braided Hopf algebras, such as smoothness (an adequate analogue of regularity for noncommutative algebras) and the twisted Calabi-Yau property (an analogue of Poincaré duality in Hochschild cohomology). For a Hopf algebra $A$ in the braided  category of comodules over a coquasitriangular Hopf algebra $H$ (no cosemisimplicity assumption on $H$ is needed here), we provide convenient criteria for smoothness and the twisted Calabi-Yau property, in terms of properties of the trivial module, see Theorem \ref{thm:smoothness} and Theorem \ref{thm:tcy} respectively. Again this generalizes known results \cite{BZ08} for ordinary Hopf algebras.

We wish to emphasize that while there exist appropriate (co)homology theories for braided Hopf algebras \cite{Baez94, AMS07, HK09} to which some of our considerations apply, our main aim in this paper is not to study braided Hopf algebras from this internal prespective, but rather to use the additional structure to study the homological properties of the underlying algebras.

We illustrate our results by studying an interesting example of a braided Hopf algebra, the coordinate algebra on the two-parameter braided quantum group ${\rm SL}_2$. 
The corresponding algebra $\OO$, depending on parameters $p,q \in k^*$, coincides when $p=q$ with the usual coordinate Hopf algebra on quantum ${\rm SL}_2$, and in general is a Hopf algebra in the category of $\mathbb Z$-graded vector space endowed with an appropriate braiding. We show that $\cd(\OO)= 3$ and that $\OO$ is a twisted Calabi-Yau algebra.

A summary of this paper is as follows. Section \ref{sec:prelim} consists of preliminaries. In Section \ref{sec:modbimod}, we study the relations between categories of modules and bimodules over a braided Hopf algebra, and then provide the proof of Theorem \ref{thm:equaldimen}. In Section \ref{sec:finiteness} we discuss finiteness conditions for modules in a tensor category and prove our smoothness criterion (Theorem \ref{thm:smoothness}). In Section \ref{sec:homodual} we study the twisted Calabi-Yau property for braided Hopf algebras, and prove Theorem \ref{thm:tcy}.  
Section \ref{sec:twop} provides  illustrations of our results on the  example of the coordinate algebra on braided quantum ${\rm SL}_2$.

\medskip

\noindent
\textbf{Acknowledgements.} The second author thanks Rachel Taillefer for her comments on the proof of Proposition \ref{prop:reso}, and for continuous help and support. She also thanks Julian Le Clainche for his useful suggestions during discussions.

\section{Preliminaries}\label{sec:prelim}

This section, which also aims at fixing some notation, consists of reminders on monoidal categories and braided Hopf algebras, together with some preliminary material to be used in the proof of our comparison of cohomological dimensions for a braided Hopf algebra. Standard references we use are \cite{KSBook97} for ordinary Hopf algebra theory,  \cite{EGNO15, bulacu2019quasi, heckenberger2020hopf}  for monoidal categories and braided Hopf algebras, and \cite{weibel1994introduction} for homological algebra. We work over a fixed base field $k$.

\subsection{Monoidal and braided monoidal categories}
Recall that a \textsl{monoidal category} $(\CC, \otimes, I, a, l, r)$ consists of  a category $\CC$ endowed with the following components:
\begin{itemize}
\item [(1)] a bifunctor $\otimes: \CC \times \CC \to \CC$, called the tensor product;
\item [(2)] an object $I$, called the unit of the monoidal category;
\item[(3)] three natural isomorphisms expressing properties of the tensor product operation:
\begin{itemize}
\item [$\bullet$] a natural isomorphism 
\[a_{X, Y, Z}: X \otimes (Y \otimes Z) \simeq (X \otimes Y) \otimes Z\]
for all objects $X, Y, Z$ in $\CC$, called the associativity constraint; 
\item[$\bullet$]  two natural isomorphisms 
\[l_{X} : I \otimes X \simeq X \quad \text{and} \quad r_{X}: X \otimes I \simeq X.\]
for any object $X$ of $\mathcal C$, called
the left and right unit constraints;
\end{itemize}
\end{itemize}
satisfying the familiar pentagon and triangle axioms, see \cite[Definition 1.1]{bulacu2019quasi}.

The monoidal category  $\mathcal C = (\CC, \otimes, I, a, l, r)$  is said to be \textsl{strict} if the associativity and unit constraints $a, l, r $  all are identities of the category.

In this paper the monoidal categories of interest are all categories of vector spaces endowed with additional structures (most notably categories of comodules over a Hopf algebra) and with the associativity and unit constraints of vector spaces. In this case there is no danger in suppressing the associativity and unit constraints, and we follow this convention, hence considering our monoidal categories as strict monoidal categories. More generally  Mac Lane's coherence theorem (see e.g.  \cite[Section 1.5]{bulacu2019quasi}) states that every monoidal category is monoidally equivalent to a strict monoidal category, and this justifies further that we only consider strict monoidal categories.

Working in strict monoidal categories allows us to use the familiar graphical calculus in monoidal categories: for objects $X, Y$ in $\mathcal C$, the identity morphism $\id_{X}: X \to X$ and a morphism $f: X \to Y$ are denoted by

\[\begin{tikzpicture}[scale=0.5, base align tikzpicture]
\draw[above](0.5,0) node{$X$};
\draw(0.5,0) -- (0.5,-1.2);
\draw (0,0) -- ( 1,0);
\draw(0,-1.2) -- (1,-1.2);
\draw[below](0.5,-1.5) node{$X$};
\end{tikzpicture}
\quad \textrm{and} \quad \begin{tikzpicture}[scale=0.6, base align tikzpicture]
\draw[above](0.5,0) node{$X$};
\draw(0.5,0) -- (0.5,-1.2);
\draw (0,0) -- ( 1,0);
\draw(0,-1.2) -- (1,-1.2);
\draw[below](0.5,-1.5) node{$Y$};
 \draw [fill=white] (0.5,-0.5) circle (0.4);
 \draw (0.5,-0.5) node {\footnotesize{$f$}};
\end{tikzpicture}\]
and morphisms in $\mathcal C$ are equal precisely when the corresponding  graphical diagrams are the same up to isotopy.

A \textsl{braided monoidal category} is a monoidal category endowed with a \textsl{braiding}, i.e a  family of natural isomorphisms
\[c_{X, Y} : X \otimes Y \to Y \otimes X\]
such that for all objects $X, Y, Z$ in $\mathcal C$, we have
\[c_{X, Y\otimes Z} = (\id_{Y} \otimes c_{X,Z}) \circ (c_{X,Y} \otimes \id_{Z}), \quad
c_{X\otimes Y, Z} = (c_{X, Z} \otimes  \id_{Y}) \circ (\id_{X} \otimes  c_{Y, Z}), \quad  
 c_{X,I}=\id_X = c_{I,X}
\] 
A braided monoidal category is said to be \textsl{symmetric} when we have $c_{X,Y}= c^{-1}_{Y, X}$ for any objects $X, Y$ in $\CC$.

For objects $X$ and $Y$ of a braided monoidal category $\CC$ , we denote the braiding isomorphism $c_{X,Y}$ and its inverse $c^{-1}_{X,Y}$ respectively by
\[ \begin{tikzpicture}[scale=0.5, base align tikzpicture]
\draw[above](1,0) node{$X$};
\draw[above](2,0) node{$Y$};
\draw (0.5,0) -- (2.5,0);
\draw (1,0) -- (1, -0.25);
\draw (2,0) -- (2, -0.25);
\smallbraid{2}{-0.25}
\draw (1,-1.25) -- (1, -1.5);
\draw (2,-1.25) -- (2, -1.5);
\draw(0.5,-1.5) -- (2.5,-1.5);
\draw[below](1,-1.5) node{$Y$};
\draw[below](2,-1.5) node{$X$};
\end{tikzpicture} \quad \text{and}\quad
 \begin{tikzpicture}[scale=0.5, base align tikzpicture]
\draw[above](1,0) node{$Y$};
\draw[above](2,0) node{$X$};
\draw (0.5,0) -- (2.5,0);
\draw (1,0) -- (1, -0.25);
\draw (2,0) -- (2, -0.25);
\draw (1,-0.25) ..controls +(0,-0.5) and +(0,0.5).. (2,-1.25);
\draw[white][line width = 5pt] (1.45, -0.5) ..controls +(0,-0.5) and
+(0,0.5).. (1.55,-1);
\draw (2, -0.25) ..controls +(0,-0.5) and +(0,0.5).. (1,-1.25);
\draw (1,-1.25) -- (1, -1.5);
\draw (2,-1.25) -- (2, -1.5);
\draw(0.5,-1.5) -- (2.5,-1.5);
\draw(0.5,-1.5) -- (2.5,-1.5);
\draw[below](1,-1.5) node{$X$};
\draw[below](2,-1.5) node{$Y$};
\end{tikzpicture}\] 
The braiding axioms then are \begin{align}
\begin{tikzpicture}[scale=0.5, base align tikzpicture]
\draw[above](1,0) node{$X$};
\draw[above](2,0) node{$Y$};
\draw[above](3,0) node{$Z$};
\smallbraid{2}{0};
\smallbraid{3}{-1};
\draw (0.5,0) -- ( 3.5,0);
\draw(3, 0) -- (3,-1);
\draw(1, -1) -- (1, -2);
\draw(0.5,-2) -- (3.5,-2);
\draw[below](1,-2.5) node{$Y$};
\draw[below](2,-2.5) node{$Z$};
\draw[below](3,-2.5) node{$X$};
\draw(-1.5,-1.5) node{$c_{X, Y \otimes Z} =$};
\end{tikzpicture} \quad
\text{and} \quad
\begin{tikzpicture}[scale=0.5, base align tikzpicture]
\draw[above](1,0) node{$X$};
\draw[above](2,0) node{$Y$};
\draw[above](3,0) node{$Z$};
\smallbraid{3}{0};
\smallbraid{2}{-1};
\draw (0.5,0) -- ( 3.5,0);
\draw(3, -1) -- (3,-2);
\draw(1, 0) -- (1, -1);
\draw(0.5,-2) -- (3.5,-2);
\draw[below](1,-2.5) node{$Z$};
\draw[below](2,-2.5) node{$X$};
\draw[below](3,-2.5) node{$Y$};
\draw(-1.5,-1.5) node{$c_{X \otimes Y, Z} =$};
\end{tikzpicture} \label{(braid)}
\end{align}

We will use as well the \textsl{reverse category} of a monoidal category: if $\CC$ is a monoidal category, then $\CC^{\rm rev}$ is the monoidal category endowed with tensor product $\otimes^{\rm rev}$ defined by $X\otimes^{\rm rev}Y = Y\otimes X$. If $\CC$ is braided, then so is $\CC^{\rm rev}$, with the braiding defined by $c'_{X,Y}=c_{Y,X}$.

\subsection{Coquasitriangular Hopf algebras} Basic examples of symmetric monoidal categories are the category $_{k}\MM$ of $k$-vector spaces over our base field $k$, with the (symmetric) braiding given by the flip operators, and more generally the category of comodules over a commutative bialgebra. In this subsection, we recall the structure that produces a braiding on the category of comodules over an arbitrary bialgebra.

A \textsl{coquasitriangular bialgebra} is a bialgebra $H$ equipped with a convolution-invertible linear form $\textbf{r}: H \otimes H \to k$ (called a universal $r$-form) such that, for any $x,y,z \in H$,
\begin{equation}
yx = \textbf{r}(x_{(1)},y_{(1)}) x_{(2)}y_{(2)} \textbf{r}^{-1}(x_{(3)},y_{(3)}) \label{r1}
\end{equation}
\begin{equation}
\textbf{r}(xy,z)= \textbf{r}(x,z_{(1)}) \textbf{r}(y,z_{(2)}), \quad  \textbf{r}(x,yz)= \textbf{r}(x_{(1)}, z) \textbf{r}(x_{(2)},y) \label{r2}
\end{equation}
A \textsl{coquasitriangular Hopf algebra} is a Hopf algebra which is a coquasitriangular bialgebra.

Let $H$ be a coquasitriangular bialgebra with universal $r$-form $\textbf{r}$. For right $H$-comodules $V$ and $W$, the  linear map $\textbf{r}_{V, W} : V \otimes W \to W \otimes V$ defined by
\begin{align}
\textbf{r}_{V, W}(v \otimes w) = \textbf{r}(v_{(1)}, w_{(1)})w_{(0)} \otimes v_{(0)} \label{rVW}
\end{align}
is an $H$-colinear isomorphism, and it is an immediate verification that the axioms of an $r$-form ensure that this procedure defines a braiding on $\mathcal M^H$, which thus becomes a braided category, which might be denoted by $\mathcal M^{H,\textbf{r}}$ if we want to remember the braided structure.

\bex\label{ex:gradedbraided}
Let $\Gamma$ be an abelian group. Then the universal $r$-forms on the group algebra $k\Gamma$ correspond to the bicharacters $\Gamma\times \Gamma \to k^*$, i.e the maps $\psi$ such that
\[ \psi(xy, z) = \psi(x, z)\psi(y,z); \,\psi(x, yz) = \psi(x, y)\psi(x, z) \quad \text{for} \quad x, y, z \in \Gamma.\]
Let us explicitly describe  the braiding associated with a such a bicharacter $\psi$. 
For this, recall first that $\mathcal M^{k\Gamma}$ identifies with the category of $\Gamma$-graded vector spaces as follows: if $V = (V, \alpha)$ is a right $k\Gamma$-comodule, put, for $g \in \Gamma$, $V_{g} = \{v \in V \mid \alpha(v) = v \otimes g\}$. Then $V = \bigoplus_{g \in \Gamma} V_{g}$ defines a $\Gamma$-grading on $V$. Conversely, if $V = \bigoplus_{g \in \Gamma} V_{g}$ is $\Gamma$-graded, putting $\alpha(v) = v\otimes g$ for $v \in V_{g}$,  defines a structure of $k\Gamma$-comodule on $V$.

Given a bicharacter $\psi$, the category $\mathcal M^{k\Gamma}$ is braided with braiding: 
\begin{align*}
c_{V, W}:\quad V \otimes W &\longrightarrow W \otimes V\\
v \otimes w \in V_{g} \otimes W_{h} &\mapsto \psi(g, h)w \otimes v
\end{align*}
When $\Gamma = \mathbb Z  =\langle z \rangle$ is the infinite cyclic group with a fixed generator $z$, a bicharacter is uniquely determined by $\xi= \psi(z,z)$. We denote by $\mathcal M^{k\mathbb Z,\xi}$ the resulting braided category. 
\eex

\subsection{Algebras, modules, coalgebras and comodules in monoidal categories}
The familiar notions of algebras,  modules, coalgebras and comodules  in vector spaces categories have direct generalizations in monoidal categories.

Let $\CC$ be a monoidal category. Recall that 
\textsl{an algebra in $\CC$} is a triple $(A, m_{A}, \eta_{A})$, where $A \in \text{ob}(\CC)$, and $m_{A}: A \otimes A \to A$ and $\eta_A : I \to A$ are morphisms such that
\[m_{A} \circ (m_{A}\otimes \id_{A}) =  m_{A} \circ (\id_A \otimes m_{A}), \ m_{A} \circ (\eta_{A}\otimes \id_A)= \id_A = m_{A} \circ (\id_A \otimes \eta_{A}).\]
Denoting the multiplication and the unit by
\[m_A=\begin{tikzpicture}[scale=0.4, base align tikzpicture]
	\draw(0.5,0)--(3.5,0);
	\draw(0.5,-1.5)--(3.5,-1.5);
	\draw[above](1,0) node{$A$};
	\draw[above](3,0) node{$A$};
	\produit{1}{0}{1}
	\draw[below](2,-1.5) node{$A$};
\end{tikzpicture} \quad \textrm{and} \quad 
\eta_A = \begin{tikzpicture}[scale=0.4, base align tikzpicture]
	\draw(8,0)--(9,0);
	\draw(8,-1.5)--(9,-1.5);
	\draw[above](8.5,0) node{$I$};
	\draw(8.5,-0.5) node{$\bullet$};
	\draw(8.5,-0.5) -- (8.5,-1.5);
	\draw[below](8.5,-1.5) node{$A$};
\end{tikzpicture}\]
the above associativity and unit  axioms read
\begin{equation}
	\begin{tikzpicture}[scale=0.35]
		\draw(0,0)--(5,0);
		\draw(0,-4.5)--(5,-4.5);
		\draw(0.5,0)--(0.5,-3);
		\draw[above](4.5, 0) node{$A$};
		\draw[above](0.5,0) node{$A$};
		\draw[above](2.5,0) node{$A$};
		\produit{2.5}{0}{1}
		\produit{0.5}{-3}{1}
		\draw[below](1.5,-4.5) node{$A$};
		\draw(3.5,-1.5) ..controls +(0,-0.7) and +(0,0.7).. (2.5,-3);
		\draw(6.5,-2) node{$=$};
		\draw(8,0)--(13,0);
		\draw(8,-4.5)--(13,-4.5);
		\draw(12.5,0)--(12.5,-3);
		\draw[above](12.5, 0) node{$A$};
		\draw[above](8.5,-0) node{$A$};
		\draw[above](10.5,-0) node{$A$};
		\produit{10.5}{-3}{1}
		\produit{8.5}{0}{1}
		\draw[below](11.5, -4.5) node{$A$};
		\draw(9.5,-1.5) ..controls +(0,-0.7) and +(0,0.7).. (10.5,-3);
		\draw(15,-2) node{and};
		\draw(17,-0.5)--(20,-0.5);
		\draw(17,-3)--(20,-3);
		\draw(19.5,-0.5)--(19.5,-1.5);
		\draw(17.5,-1.25)--(17.5,-1.5);
		\draw[above](19.5, -0.5) node{$A$};
		\draw(17.5,-1.25) node{$\bullet$};
		\produit{17.5}{-1.5}{1}
		\draw[below](18.5, -3) node{$A$};
		\draw(21,-2) node{$=$};
		\draw(22,-1)--(23,-1);
		\draw(22,-2.5)--(23,-2.5);
		\draw[above](22.5, -1) node{$A$};
		\draw[below](22.5, -2.5) node{$A$};
		\draw(22.5,-1)--(22.5,-2.5);
		\draw(24,-2) node{$=$};
		\draw(25,-0.5)--(28,-0.5);
		\draw(25,-3)--(28,-3);
		\draw(25.5,-0.5)--(25.5,-1.5);
		\draw(25.5,-1.25)--(25.5,-1.5);
		\draw[above](27.5, -0.5) node{$A$};
		\draw(27.5,-1.25) node{$\bullet$};
		\draw(27.5,-1.25)--(27.5,-1.5);
		\produit{25.5}{-1.5}{1}
		\draw[below](26.5, -3) node{$A$};
		\draw(29,-2) node{.};
	\end{tikzpicture}\label{associatif}
\end{equation}
If  $A$, $B$ are  algebras in the monoidal category $\CC$, an algebra morphism $f : A \to B$ is a morphism in $\CC$ such that
\[f \circ m_A = m_B\circ (f\otimes f) \quad \textrm{and} \quad f\circ\eta_A = \eta_B\]
Graphically, this means 
\begin{equation*}
	\begin{tikzpicture}[scale=0.55]
		\draw(0, 0)--(2,0);
		\draw(0, -3)--(2,-3);
		\produit{0.5}{0}{0.5}
		\draw(0.5, -2)--(0.5,-3);
		\draw(1,-1) ..controls +(0,-0.7) and +(0,0.7).. (0.5,-2);
		\draw [fill=white] (0.5,-2.25) circle (0.4);
		\draw (0.5,-2.25) node {\footnotesize{$f$}};
		\draw[above](0.5,0) node{$A$};
		\draw[above](1.5,0) node{$A$};
		\draw[below](0.5,-3) node{$B$};
		\draw (2.5,-1.5) node{$=$};
		\draw[above](4.5,-.5) node{$A$};
		\draw[above](5.5,-.5) node{$A$};
		\draw (4,-.5) -- ( 6,-0.5);
		\draw (5.5,-0.5) -- (5.5,-1.2);
		\draw (4.5,-.5) -- (4.5,-1.2);
		\draw (5.5,-1.1) -- (5.5,-2.1);
		\draw (4.5,-1.1) -- (4.5,-2.1);
		\draw [fill=white] (4.5,-1.73) circle (0.4);
		\draw (4.5,-1.73) node {\footnotesize{$f$}};
		\draw [fill=white] (5.5,-1.73) circle (0.4);
		\draw (5.5,-1.73) node {\footnotesize{$f$}};
		\draw(4, -3.1) -- (6,-3.1);
		\produit{4.5}{-2.1}{0.5}
		\draw[below](5,-3.1) node{$B$};
		\draw (7,-1.5) node{and};
		\draw(8, -0.5)--(9.5,-0.5);
		\draw(8, -2.5)--(9.5,-2.5);
		\draw(8.75, -1) node {$\bullet$};
		\draw(8.75, -1)--(8.75,-2.5);
		\draw [fill=white] (8.75,-1.75) circle (0.4);
		\draw (8.75,-1.75) node {\footnotesize{$f$}};
		\draw[above](8.75,-0.5) node{$I$};
		\draw[below](8.75,-2.5) node{$A$};
		\draw (10.5,-1.5) node{$=$};
		\draw(11.5, -0.75)--(13,-0.75);
		\draw(11.5, -2.25)--(13,-2.25);
		\draw(12.25, -1.35) node {$\bullet$};
		\draw(12.25, -1.35)--(12.25,-2.25);
		\draw[above](12.25,-0.75) node{$I$};
		\draw[below](12.25,-2.25) node{$A$};
		\draw (15,-1.5) node{$.$};
	\end{tikzpicture}\label{morphism}\\
\end{equation*}

Let $A$ be an algebra in $\CC$. 
A \textsl{left $A$-module} $M$ (in $\CC$) is an object $M$ in $\CC$ together with a morphism $\mu_{M}^{l}: A \otimes M \to M$, denoted by 
\[\begin{tikzpicture}[scale=0.35, base align tikzpicture]
\draw(0.5,0)--(3.5,0);
\draw(0.5,-1.5)--(3.5,-1.5);
\draw[above](3, 0) node{$M$};
\draw[above](1,-0) node{$A$};
\gauche{3}{0}{1}
\draw[below](1.5,-1.5) node{$M$};
\end{tikzpicture}\]
 such that
\begin{align}
\begin{tikzpicture}[scale=0.35]
\draw(0,0)--(5,0);
\draw(0,-4)--(5,-4);
\draw(0.5,0)--(0.5,-2.5);
\draw[above](4.5, 0) node{$M$};
\draw[above](0.5,0) node{$A$};
\draw[above](2.5,0) node{$A$};
\gauche{4.5}{0}{1}
\gauche{2.5}{-2.5}{1}
\draw[below](1,-4) node{$M$};
\draw(3,-1.5) ..controls +(0,-0.7) and +(0,0.7).. (2.5,-2.5);
\draw(6.5,-2) node{$=$};
\draw(8,0)--(13,0);
\draw(8,-4)--(13,-4);
\draw(12.5,0)--(12.5,-2.5);
\draw[above](12.5, 0) node{$M$};
\draw[above](8.5,-0) node{$A$};
\draw[above](10.5,-0) node{$A$};
\gauche{12.5}{-2.5}{1}
\produit{8.5}{0}{1}
\draw[below](11, -4) node{$M$};
\draw(9.5,-1.5) ..controls +(0,-0.7) and +(0,0.7).. (10.5,-2.5);
\draw(15,-2) node{and};
\draw(17,-0.5)--(20,-0.5);
\draw(17,-3)--(20,-3);
\draw(19.5,-0.5)--(19.5,-1.5);
\draw(17.5,-1.25)--(17.5,-1.5);
\draw[above](19.5, -0.5) node{$M$};
\draw(17.5,-1.25) node{$\bullet$};
\gauche{19.5}{-1.5}{1}
\draw[below](18, -3) node{$M$};
\draw(21,-2) node{$=$};
\draw(22,-1)--(23,-1);
\draw(22,-2.5)--(23,-2.5);
\draw[above](22.5, -1) node{$M$};
\draw[below](22.5, -2.5) node{$M$};
\draw(24,-2) node{.};
\draw(22.5,-1)--(22.5,-2.5);
\end{tikzpicture}\label{l}
\end{align}
 The category of left $A$-modules (in $\CC$) is denoted  $_{A}\CC$, with morphisms the left $A$-linear morphisms, defined just as in the classical case. 
The  category  $\CC_{A}$  of right $A$-modules is defined similarly.

An \textsl{$A$-bimodule in} $\CC$ is an object 
$M$ in $\CC$ which is simultaneously a left and right $A$-module and such that
\begin{align}
\begin{tikzpicture}[scale=0.35]
\draw(0.5,0)--(5.5,0);
\draw(0.5,-4)--(5,-4);
\draw(5,0)--(5,-2.5);
\draw[above](5, 0) node{$A$};
\draw[above](1,-0) node{$A$};
\draw[above](3,-0) node{$M$};
\gauche{3}{0}{1}
\droite{5}{-2.5}{1}
\draw[below](4,-4) node{$M$};
\draw(1.5,-1.5) ..controls +(0,-0.7) and +(0,0.7).. (3,-2.5);
\draw(6.5,-2) node{$=$};
\draw(8,0)--(13,0);
\draw(8,-4)--(13,-4);
\draw(8.5,0)--(8.5,-2.5);
\draw[above](12.5, 0) node{$A$};
\draw[above](8.5,-0) node{$A$};
\draw[above](10.5,-0) node{$M$};
\gauche{10.5}{-2.5}{1}
\droite{12.5}{0}
\draw[below](9.5, -4) node{$M$};
\draw(12,-1.5) ..controls +(0,-0.7) and +(0,0.7).. (10.5,-2.5);
\end{tikzpicture}\label{bimodule}.
\end{align}
 The category of $A$-bimodules in $\CC$ is denoted $_{A}\CC_{A}$. 

\bex \label{ex:aMH} Let $H$ be a bialgebra. An algebra in the category of right $H$-comodules  $\MM^H$ is an $H$-comodule algebra, that is, an ordinary $k$-algebra $A$ endowed with an $H$-comodule structure such that the coaction map $A \to A\otimes H$ is an algebra map in the usual sense. The category $_A(\mathcal M^H)$ of left $A$-modules in $\mathcal M^H$ is the usual category of relative Hopf modules $_A\mathcal M^H$, whose objects are vector spaces $V$ endowed simultaneously with a right $H$-comodule and a left $A$-module structure and such that for any $a\in A$ and $v\in V$, we have
\[ (a.v)_{(0)} \otimes (a.v)_{(1)} = a_{(0)}.v_{(0)} \otimes a_{(1)}v_{(1)}\]
where we have used Sweedler's notation in the standard way. Similarly, the categories $(\mathcal M^H)_A$ and $_A(\mathcal M^H)_A$ are the familiar categories $\mathcal M^H_A$ and $_A\mathcal M^H_A$ respectively.
\eex

The following result is the  straightforward adaptation to monoidal categories of the familiar free module construction, see \cite[Proposition 1.6]{AMS07} for example.

\bp \label{prop:adjmod}
Let $\CC$ be a monoidal category, let $A$ be an algebra in $\CC$, and let $V$ be an object in $\CC$.
\begin{enumerate}
 \item Left multiplication endows $A\otimes V$ with a structure of left $A$-module. This construction defines a functor $\CC \to {_A\CC}$ which is left adjoint to the forgetful functor $_A\CC \to \CC$. A left $A$-module isomorphic to $A\otimes V$ as above is said to be free.
\item Right multiplication endows $V\otimes A$ with a structure of right $A$-module. This construction defines a functor $\CC \to \CC_A$ which is left adjoint to the forgetful functor $\CC_A \to \CC$. A right $A$-module isomorphic to $V\otimes A$ as above is said to be free.
\item Left and right multiplications endow $A\otimes V\otimes A$ with a structure of $A$-bimodule. This construction defines a functor $\CC \to \,_A\CC_A$ which is left adjoint to the forgetful functor $_A\CC_A \to \CC$. An $A$-bimodule isomorphic to $A\otimes V\otimes A$ as above is said to be free.
\end{enumerate}
\ep

\bpf
The proof is similar to the usual one in vector spaces categories. For example, if $X$ is a left $A$-module, the map
\begin{align*}
\Phi: \Hom_{_{A}\CC}(A \otimes V, X) &\longrightarrow \Hom_{\CC}(V, X)\\
f &\longmapsto f \circ (\eta_{A} \otimes \id_{V})
\end{align*}
is an isomorphism  with inverse
\begin{align*} 
\Psi: \Hom_{\CC}(V, X)&\longrightarrow \Hom_{_{A}\CC}(A \otimes V, X)\\
g &\longmapsto \mu^{l}_{X} \circ (\id_{A} \otimes g).\qedhere
\end{align*}
\epf

As in the ordinary case of vector spaces, the definition of a coalgebra in a monoidal category is dual that of an algebra. More precisely, a \textsl{coalgebra in the monoidal category} $\CC$ is a triple $(C, \Delta_{C}, \varepsilon_{C})$, where $\Delta_{C}: C \to C \otimes C$ and  $\varepsilon_{C}: C \to I$ are morphisms, denoted by
\begin{equation*}
 \begin{tikzpicture}[scale=0.35, base align tikzpicture]
\draw(0.5,0)--(3.5,0);
\draw(0.5,-1.5)--(3.5,-1.5);
\draw[below](3,-1.5) node{$C$};
\draw[below](1,-1.5) node{$C$};
\coproduit{3}{-1.5}{1}
\draw[above](2,-0) node{$C$};
\end{tikzpicture}
\quad  \textrm{and} \quad  \begin{tikzpicture}[scale=0.35, base align tikzpicture] 
\draw(8,0)--(9,0);
\draw(8,-1.5)--(9,-1.5);
\draw[above](8.5,0) node{$C$};
\draw(8.5,-1) node{$\bullet$};
\draw(8.5,0) -- (8.5,-1);
\draw[below](8.5,-1.5) node{$I$};
\end{tikzpicture}, 
\end{equation*}
satisfying the coassociativity and counit conditions:
\begin{equation}
\begin{tikzpicture}[scale=0.35]
\draw(0,0)--(5,0);
\draw(0,-4.5)--(5,-4.5);
\draw(4.5,-1.5)--(4.5,-4.5);
\draw[below](4.5, -4.5) node{$C$};
\draw[below](0.5,-4.5) node{$C$};
\draw[below](2.5,-4.5) node{$C$};
\coproduit{4.5}{-1.5}{1}
\coproduit{2.5}{-4.5}{1}
\draw[above](3.5,0) node{$C$};
\draw(2.5,-1.5) ..controls +(0,-0.7) and +(0,0.7).. (1.5,-3);
\draw(6.5,-2) node{$=$};
\draw(8,0)--(13,0);
\draw(8,-4.5)--(13,-4.5);
\draw(8.5,-1.5)--(8.5,-4.5);
\draw[below](12.5, -4.5) node{$C$};
\draw[below](8.5,-4.5) node{$C$};
\draw[below](10.5,-4.5) node{$C$};
\coproduit{10.5}{-1.5}{1}
\coproduit{12.5}{-4.5}{1}
\draw[above](9.5,0) node{$C$};
\draw(10.5,-1.5) ..controls +(0,-0.7) and +(0,0.7).. (11.5,-3);
\draw(15,-2) node{and};
\draw(17,-0.5)--(20,-0.5);
\draw(17,-3)--(20,-3);
\draw(19.5,-2)--(19.5,-3);
\draw(17.5,-2)--(17.5,-2.5);
\draw[below](19.5, -3) node{$C$};
\draw(17.5,-2.5) node{$\bullet$};
\coproduit{19.5}{-2}{1}
\draw[above](18.5, -0.5) node{$C$};
\draw(21,-2) node{$=$};
\draw(22,-1)--(23,-1);
\draw(22,-2.5)--(23,-2.5);
\draw[above](22.5, -1) node{$C$};
\draw[below](22.5, -2.5) node{$C$};
\draw(22.5,-1)--(22.5,-2.5);
\draw(24,-2) node{$=$};
\draw(25,-0.5)--(28,-0.5);
\draw(25,-3)--(28,-3);
\draw(27.5,-2)--(27.5,-3);
\draw(25.5,-2)--(25.5,-2.5);
\draw[below](25.5, -3) node{$C$};
\draw(25.5,-2.5) node{$\bullet$};
\coproduit{27.5}{-2}{1}
\draw[above](26.5, -0.5) node{$C$};
\draw(29,-2) node{.};
\end{tikzpicture}\label{cogebra}
\end{equation}

The definition of a coalgebra morphism and of the categories of right or  left comodules over a coalgebra in $\CC$ (denoted $\mathcal C^C$ and $^C\mathcal C$ respectively) are straightforward adaptations of the ordinary ones, and we omit them.

Algebras or coalgebras in a monoidal category are algebras or coalgebras in the reverse category $\mathcal C^{\rm rev}$ as well, and there are obvious category isomorphisms
\[\CC_A \simeq {_A\CC^{\rm rev}}, \quad _A\CC \simeq \CC^{\rm rev}_A, \quad _A\CC_A \simeq {_A\CC^{\rm rev}_A}.\]

\subsection{Hopf algebras in braided monoidal categories (braided Hopf algebras)}\label{sub:braidedHopf}
There is no natural way to formulate the definition of a bialgebra in an arbitrary monoidal category, but this becomes possible in the presence of a braiding, thanks to the following construction.

Let $\CC=(\mathcal C,c)$ be a braided monoidal category, and let $A$, $B$ be algebras in $\CC$. 
The braiding  of $\CC$ gives rise to an algebra structure on the object $A \otimes B$ with multiplication given by
 \begin{align}
\begin{tikzpicture}[scale=0.5]
\draw(3.5,0)--(3.5,-1.5);
\draw[above](1.5, 0) node{$B$};
\draw[above](0.5, 0) node{$A$};
\draw[above](2.5, 0) node{$A$};
\draw[above](3.5, 0) node{$B$};
\smallbraid{2.5}{0}
\draw(0,0)--(4,0);
\draw(0,-2.5)--(3.5,-2.5);
\produit{.5}{-1.5}{0.5}
\produit{2.5}{-1.5}{0.5}
\draw(0.5,0)--(0.5,-1.5);
\draw(1.5,-1)--(1.5,-1.5);
\draw(2.5,-1)--(2.5,-1.5);
\draw(-1.5,-1.5) node{$m_{A \otimes_{c} B} = $};
\draw[below](1, -2.5) node{$A$};
\draw[below](3, -2.5) node{$B$};
\end{tikzpicture}.\label{cross}
 \end{align}
and unit $\eta_A\otimes \eta_B$.
The resulting algebra in $\mathcal C$ is denoted by $A \otimes_{c} B$ and is called the \textsl{braided tensor product algebra of $A$ and $B$}.

The notion of bialgebra in a braided category  is then defined as follows: a \textsl{bialgebra $H = (H, m_{H}, \eta_{H}, \Delta_{H}, \varepsilon_{H})$ in a braided category $\CC$} is an algebra $(H, m_{H}, \eta_{H})$ and a coalgebra $(H, \Delta_{H}, \varepsilon_{H})$ in $\CC$ such that $\Delta_{H}: H \to H \otimes_{c} H$ and $\varepsilon_{H}: H \to I$ are algebra morphisms; that is
 \begin{align}
 \begin{tikzpicture}[scale=0.5]
 \draw(0, 0)--(2,0);
 \draw(0, -1.5)--(2,-1.5);
 \coproduit{1.5}{-1.5}{0.5}
 \produit{0.5}{0}{0.5}
 \draw[above](0.5,0) node{$H$};
 \draw[above](1.5,0) node{$H$};
 \draw[below](0.5,-1.5) node{$H$};
 \draw[below](1.5,-1.5) node{$H$};
 \draw (2.5,-0.75) node{$=$};
 \draw(3, 0.5)--(7,0.5);
 \draw(3, -2.5)--(7,-2.5);
 \draw(3.5, -0.5)--(3.5,-1.5);
 \draw(6.5, -0.5)--(6.5,-1.5);
 \coproduit{4.5}{-0.5}{0.5}
 \coproduit{6.5}{-0.5}{0.5}
 \produit{5.5}{-1.5}{0.5}
 \produit{3.5}{-1.5}{0.5}
 \smallbraid{5.5}{-0.5}
 \draw[above](4,0.5) node{$H$};
 \draw[above](6,0.5) node{$H$};
 \draw[below](4,-2.5) node{$H$};
 \draw[below](6,-2.5) node{$H$};
 \draw (7.5,-0.75) node{$,$};
 \draw(8, 0.5)--(10,0.5);
 \draw(8, -2.5)--(10,-2.5);
 \coproduit{9.5}{-2.5}{0.5}
 \draw[above](8.5,0.5) node{$I$};
 \draw(8.5, -0.25) node{$\bullet$};
 \draw(8.5,-0.25) ..controls +(0,-0.7) and +(0,0.7).. (9,-1.5);
 \draw[below](8.5,-2.5) node{$H$};
 \draw[below](9.5,-2.5) node{$H$};
 \draw (10.5,-0.75) node{$=$};
 \draw(11, 0)--(13, 0);
 \draw(11, -1.5)--(13,-1.5);
 \draw(11.5, -0.75) node{$\bullet$};
 \draw(12.5, -0.75) node{$\bullet$};
 \draw(11.5, -0.75)--(11.5, -1.5);
 \draw(12.5, -0.75)--(12.5, -1.5);
 \draw[above](12, 0) node{$I$};
 \draw[below](11.5, -1.5) node{$H$};
 \draw[below](12.5, -1.5) node{$H$};
 \draw (14,-0.75) node{ and};
 \draw(15, 0.5)--(17,0.5);
 \draw(15, -2.5)--(17,-2.5);
 \produit{15.5}{0.5}{0.5}
 \draw[above](15.5,0.5) node{$H$};
 \draw[above](16.5,0.5) node{$H$};
 \draw(16.5, -1.75) node{$\bullet$};
 \draw(16,-0.5) ..controls +(0,-0.7) and +(0,0.7).. (16.5,-1.6);
 \draw[below](16.5,-2.5) node{$I$};
 \draw (17.5,-0.75) node{$=$};
 \draw(18, 0)--(20, 0);
 \draw(18, -1.5)--(20,-1.5);
 \draw(18.5, -0.75) node{$\bullet$};
 \draw(19.5, -0.75) node{$\bullet$};
 \draw(18.5, 0)--(18.5, -0.75);
 \draw(19.5, 0)--(19.5, -0.75);
 \draw[below](19, -1.5) node{$I$};
 \draw[above](18.5, 0) node{$H$};
 \draw[above](19.5, 0) node{$H$};
 \draw (21,-0.75) node{,};
 \draw(25, 0)--(26, 0);
 \draw(25,-1.5)--(26,-1.5);
 \draw(25.5, 0)--(25.5, -1.5);
 \draw[below](25.5, -1.5) node{$I$};
 \draw[above](25.5, 0) node{$I$};
 \draw (27,-0.75) node{.};
 \draw (24,-0.75) node{=};
 \draw(22, 0.25)--(23, 0.25);
 \draw(22,-1.75)--(23,-1.75);
 \draw(22.5, -0.25) node{$\bullet$};
 \draw(22.5, -1.25) node{$\bullet$};
 \draw(22.5, -0.25)--(22.5, -1.25);
 \draw[below](22.5, -1.75) node{$I$};
 \draw[above](22.5, 0.25) node{$I$};
 \end{tikzpicture}\label{Hopf}
 \end{align}
A \textsl{Hopf algebra in a braided category $\CC$}  is a braided biagebra $H$ in $\CC$ such that there exists a morphism $S : H \to H$ in $\CC$ (called the \textsl{antipode} of $H$) with $S * \id_{H} = \eta_{H} \circ \varepsilon_{H} = \id_{H} * S$, where $*$ is the convolution product (see e.g. \cite[Lemma $2.57$]{bulacu2019quasi}), which, in diagrammatic notation, means that $S$ satisfies 
\begin{align}
\begin{tikzpicture}[scale=0.5]
\draw(0, 0)--(2,0);
\draw(0, -3)--(2,-3);
\coproduit{1.5}{-1}{0.5}
\draw(0.5, -1)--(0.5,-2);
\draw(1.5, -1)--(1.5,-2);
 \draw [fill=white] (0.5,-1.5) circle (0.4);
 \draw (0.5,-1.5) node {$S$};
\produit{0.5}{-2}{0.5}
\draw[above](1,0) node{$H$};
\draw[below](1,-3) node{$H$};
\draw (2.5,-1.5) node{$=$};
\draw(4, -0.5)--(5, -0.5);
\draw(4,-2.5)--(5,-2.5);
\draw(4.5, -1) node{$\bullet$};
\draw(4.5, -2) node{$\bullet$};
\draw(4.5, -0.5)--(4.5, -1);
\draw(4.5, -2)--(4.5, -2.5);
\draw[below](4.5, -2.5) node{$H$};
\draw[above](4.5, -0.5) node{$H$};
\draw (6,-1.5) node{$=$};
\draw(7, 0)--(9,0);
\draw(7, -3)--(9,-3);
\coproduit{8.5}{-1}{0.5}
\draw(7.5, -1)--(7.5,-2);
\draw(8.5, -1)--(8.5,-2);
 \draw [fill=white] (8.5,-1.5) circle (0.4);
 \draw (8.5,-1.5) node {$S$};
\produit{7.5}{-2}{0.5}
\draw[above](8,0) node{$H$};
\draw[below](8,-3) node{$H$};
\draw (10.5,-1.5) node{$.$};
\end{tikzpicture}\label{S}
\end{align}

A \textsl{braided Hopf algebra} is a Hopf algebra in an appropriate braided category.

Given an algebra $A$ in a braided category $\CC$, the opposite algebra $A^{\rm op}$ is the algebra having $A$ as underlying object, multiplication defined by $m_{A^{\rm op}} = m_A \circ c_{A,A}$ and the same unit as $A$. In case $\CC$ is a category of ordinary vector spaces, the opposite algebra $A^{\rm op}$ above should not be confused with the usual opposite algebra, and in that case $A^{\rm op}$ might be denoted $A^{{\rm op},c}$ to highlight the dependency on  the braiding $c$. One defines similarly the co-opposite coalgebra $C^{\rm cop}$ of a coalgebra $C$ in $\mathcal C$. The antipode of a Hopf algebra $H$ in $\CC$ is then an algebra map $H\to H^{\rm op}$ and a coalgebra map $H \to H^{\rm cop}$ (see e.g. \cite[Proposition 2.65]{bulacu2019quasi}), which, in diagrammatic notation, means
\begin{align}
\begin{tikzpicture}[scale=0.5]
\draw(-6.5, -1.5) node{\text{i.}};
\draw(0, 0)--(2,0);
\draw(0, -3)--(2,-3);
\produit{0.5}{0}{0.5}
\draw(0.5, -2)--(0.5,-3);
\draw(1,-1) ..controls +(0,-0.7) and +(0,0.7).. (0.5,-2);
 \draw [fill=white] (0.5,-2.25) circle (0.4);
 \draw (0.5,-2.25) node {$S$};
\draw[above](0.5,0) node{$H$};
\draw[above](1.5,0) node{$H$};
\draw[below](0.5,-3) node{$H$};
\draw (2.5,-1.5) node{$=$};
\draw[above](4.5,0) node{$H$};
\draw[above](5.5,0) node{$H$};
\draw (4,0) -- ( 6,0);
\draw (5.5,0) -- (5.5,-0.1);
\draw (4.5,0) -- (4.5,-0.1);
\draw (5.5,-1.1) -- (5.5,-2.1);
\draw (4.5,-1.1) -- (4.5,-2.1);
\draw [fill=white] (4.5,-1.73) circle (0.4);
 \draw (4.5,-1.73) node {$S$};
 \draw [fill=white] (5.5,-1.73) circle (0.4);
  \draw (5.5,-1.73) node {$S$};
\smallbraid{5.5}{-0.1}
\draw(4, -3.1) -- (6,-3.1);
\produit{4.5}{-2.1}{0.5}
\draw[below](5,-3.1) node{$H$};
\draw (6.5,-1.5) node{$,$};
\draw(8, -0.5)--(9.5,-0.5);
\draw(8, -2.5)--(9.5,-2.5);
\draw(8.75, -1) node {$\bullet$};
\draw(8.75, -1)--(8.75,-2.5);
 \draw [fill=white] (8.75,-1.75) circle (0.4);
 \draw (8.75,-1.75) node {$S$};
\draw[above](8.75,-0.5) node{$I$};
\draw[below](8.75,-2.5) node{$H$};
\draw (10.5,-1.5) node{$=$};
\draw(11.5, -0.75)--(13,-0.75);
\draw(11.5, -2.25)--(13,-2.25);
\draw(12.25, -1.35) node {$\bullet$};
\draw(12.25, -1.35)--(12.25,-2.25);
\draw[above](12.25,-0.75) node{$I$};
\draw[below](12.25,-2.25) node{$H$};
\draw (15,-1.5) node{$,$};
\end{tikzpicture}\label{i}\\
\begin{tikzpicture}[scale=0.5]
\draw(-6.5, -1.5) node{\text{ii.}};
\draw(0, 0)--(2,0);
\draw(0, -3)--(2,-3);
\coproduit{1.5}{-3}{0.5}
\draw(0.5, 0)--(0.5,-1.25);
\draw(0.5,-1.25) ..controls +(0,-0.5) and +(0,0.5).. (1,-2);
 \draw [fill=white] (0.5,-0.75) circle (0.4);
 \draw (0.5,-0.75) node {$S$};
\draw[above](0.5,0) node{$H$};
\draw[below](1.5,-3) node{$H$};
\draw[below](0.5,-3) node{$H$};
\draw (2.5,-1.5) node{$=$};
\draw (4,0) -- (6,0);
\draw (4,-3.1) -- (6,-3.1);
\coproduit{5.5}{-1}{0.5}
 \draw (4.5,-1) -- (4.5,-2);
  \draw (5.5,-1) -- (5.5,-2);
\draw [fill=white] (4.5,-1.5) circle (0.4);
 \draw (4.5,-1.5) node {$S$};
 \draw [fill=white] (5.5,-1.5) circle (0.4);
  \draw (5.5,-1.5) node {$S$};
  \smallbraid{5.5}{-2}
\draw[above](5,0) node{$H$};
\draw[below](4.5,-3.1) node{$H$};
\draw[below](5.5,-3.1) node{$H$};
 \draw (4.5,-3) -- (4.5,-3.1);
  \draw (5.5,-3) -- (5.5,-3.1);
\draw (6.5,-3.1) node{$,$};
\draw(8, -0.5)--(9.5,-0.5);
\draw(8, -2.5)--(9.5,-2.5);
\draw(8.75, -2) node {$\bullet$};
\draw(8.75, -0.5)--(8.75,-2);
 \draw [fill=white] (8.75,-1.15) circle (0.4);
 \draw (8.75,-1.15) node {$S$};
\draw[above](8.75,-0.5) node{$H$};
\draw[below](8.75,-2.5) node{$I$};
\draw (10.5,-1.5) node{$=$};
\draw(11.5, -0.75)--(13,-0.75);
\draw(11.5, -2.25)--(13,-2.25);
\draw(12.25, -1.75) node {$\bullet$};
\draw(12.25, -0.75)--(12.25,-1.75);
\draw[above](12.25,-0.75) node{$H$};
\draw[below](12.25,-2.25) node{$I$};
\draw (15,-1.5) node{$.$};
\end{tikzpicture}\label{ii}
\end{align}

If $A$ is a Hopf algebra in a braided category $\CC$ and $M$ is an object in $\CC$, then $ \varepsilon\otimes {\rm id}_M : A\otimes M \to M$ defines a left $A$-module structure on $M$, and we denote by $_\varepsilon M$ the resulting left $A$-module. If $M$ is a right $A$-module, then $_\varepsilon M$, with the left $A$-module structure above, becomes an $A$-bimodule. Similarly, we construct the right $A$-module $M_\varepsilon$ and the $A$-bimodule $M_\varepsilon$ if $M$ is a left $A$ module. For the unit object $I$ of $\CC$, we obtain in this way the trivial left and right $A$-modules $_\varepsilon I$ and $I_\varepsilon$.

\subsection{Abelian categories and projective dimensions}
In this paper, the abelian categories we consider are \textsl{abelian $k$-linear categories}, which means that our categories are abelian categories in the usual sense and moreover each $\Hom$ set is endowed with a structure of vector space over $k$, and the composition operation is $k$-bilinear. The basic examples of course are module or bimodule categories $_A\mathcal M$, $\mathcal M_A$ or $_A\mathcal M_A$  over a $k$-algebra $A$.

Let $\CC$ be an abelian $k$-linear category. If $\mathcal C$ has enough projectives, which as usual  means that for every object $X$ of $\CC$ there is an epimorphism $P \twoheadrightarrow X$ with $P$ projective, then every object $X$ in $\CC$ has a projective resolution and the \textsl{projective dimension of $X$}, denoted by $\pd_{\CC}(X)$, is defined to be the smallest possible length of a projective resolution of $X$. An alternative description of $\pd_{\CC}(X)$ is given by the formula
\begin{align*}\text{pd}_{\CC}(X) &= \text{sup} \{n \in \mathbb{N} \mid \exists Y \in \textrm{ob}(\CC), \e_{\CC}^{n}(X, Y) \neq \{0\}\} \in \mathbb N \cup \{\infty\} \\
& = \textrm{inf}\{n \in \mathbb{N} \mid \e_{\CC}^{n+1}(X, Y) ={0}, \ \forall  Y \in \textrm{ob}(\CC)  \}
\end{align*}
where $\e^*_{\CC}(-,-)$ are the usual Yoneda $\e$-spaces of the abelian category $\CC$, which can be computed using projective resolutions of the first factor when $\CC$ has enough projectives, and using injective resolutions of the second factor when $\CC$ has enough injectives.

The \textsl{projective dimension of $\mathcal C$} (still assuming that $\mathcal C$ has enough projectives) is then defined by
\[\pd(\CC)= \text{sup} \{\pd_{\CC}(M), \ M \in {\rm ob}(\CC)\}.\]
For a $k$-algebra $A$, the projective dimension of a left (resp. right) $A$-module $M$ is $\pd_A(M) = \pd_{_A\mathcal M}(M)$ (resp. $\pd_{A^{\rm op}}(M)=\pd_{\mathcal M_A}(M)$), and the \textsl{left and right global dimensions} of $A$ are respectively defined by
\[\lgd(A)= \pd(_A\mathcal M), \ \rgd(A)= \pd(\mathcal M_A).\]

When $\lgd(A)$ and $\rgd(A)$ coincide, the common quantity is denoted $\gd(A)$, and is called the \textsl{global dimension of $A$}. Finally, the \textsl{Hochschild cohomological dimension} of $A$ is defined by
\[\cd(A) = \pd_{_A\mathcal M_A}(A).\]
It is well known (see e.g. \cite[Proposition IX.7.6]{CE56}) that 
\[\lgd(A)\leq \cd(A), \ \rgd(A) \leq \cd(A).\]

We record, for future use, a well-known useful result (see e.g. \cite{adams1967adjoint}).

\bp\label{prop:abelian}
Let $\CC$ and $\DD$ be $k$-linear abelian categories, and let $F: \CC \to \DD$ and $G: \DD \to \CC$ be some $k$-linear exact functors  with $ G$  right adjoint to $F$. 
\begin{enumerate}
\item  If $P \in \normalfont\text{ob}(\CC)$ is projective, then $F(P) \in \normalfont \text{ob}(\DD)$ is also projective. If $\CC$ has enough projectives and furthermore $G$ is faithful, then 
$\DD$ also has enough projectives.
\item  If $P \in \normalfont \text{ob}(\DD)$ is injective, then $G(P) \in  \normalfont\text{ob}(\CC)$ is also injective. If $\DD$ has enough injectives and furthermore $F$ is faithful, then $\CC$ also has enough injectives.
\item Suppose that $\CC$ and $\DD$ have enough projectives or injectives. Then we have natural isomorphisms
\[\normalfont\e^{*}_{\DD}(F(X), V) \simeq \e^{*}_{\CC}(X, G(V))\]
for any $X \in \normalfont\text{ob}(\CC)$ and $V \in {\rm ob}(\DD)$. In particular, if $\CC$ and $\DD$ have enough projectives, then we have $\normalfont\text{pd}_{\DD}(F(X)) \leq \pd_{\CC}(X)$.
\end{enumerate}
\ep

In order that the inequality of projective dimensions in the above result becomes an equality, we need one more assumption on the functor $F$.  Let $\CC, \DD$ be categories and let $F: \CC \to \DD$ be a functor. Then $F$ induces a natural transformation
\[\PP_{-, -}: \Hom_{\CC}(-, -) \longrightarrow \Hom_{\DD}\big(F(-), F(-)\big).\]
We say that $F$ is a \textsl{separable functor} \cite{na1989separable} if there is natural transformation \[\textbf{M}_{-, -}: \Hom_{\DD}\big(F(-), F(-)\big) \longrightarrow \Hom_{\CC}(-, -)\]
such that $\textbf{M}_{-, -} \circ \PP_{-, -} = \textbf{1}_{\Hom_{\CC}(-, -)}$.

The following  result is certainly well known, for a proof we refer the reader to the obvious adaptation of \cite[Proposition 14]{bichon2022monoidal}.

\bp\label{prop:pdseparable}
Let $\CC$ and $\DD$ be $k$-linear abelian categories having enough projective objects, and let $F: \CC \to \DD$ be a $k$-linear functor. Assume that $F$ is exact, preserves projective objects and  is separable. Then for any object $X$ in $\CC$, we have $\normalfont \text{pd}_{\CC}(X) = \text{pd}_{\DD}(F(X)).$
\ep

The main examples of separable functors we consider in this paper are provided by the following result from \cite{CMIZ99}.

\bp\label{prop:sepHopfModule}
Let $H$ be a cosemisimple Hopf algebra and let $A$ be a right $H$-comodule algebra. The forgetful functors
$ _A\mathcal M^H \to {_A \mathcal M}$ and $\mathcal M^H_A\to \mathcal M_A$
are separable.
\ep

\bpf
These are left-right variations on \cite[Corollary 3.5]{CMIZ99} or \cite[Corollary 24]{caenepeel2004frobenius}, based on Rafael's separability criterion for adjoint functors, see also the direct approach using the Haar integral in \cite[Lemma 20]{bichon2022monoidal}.
\epf

\subsection{Abelian monoidal categories} \label{sub:abelianmonoidal}

An \textsl{abelian $k$-linear monoidal category} is a $k$-linear abelian category $\CC$ endowed with a monoidal category structure such that the bifunctor $-\otimes -: \CC \times \CC \to \CC$ is $k$-bilinear and such that for any object $X$ in $\CC$, the functors $X\otimes -: \CC \to \CC$ and $- \otimes X : \CC \to \CC$ are exact.  

An  \textsl{abelian $k$-linear braided category} is an abelian $k$-linear monoidal category endowed with a braiding (and hence is in particular a braided monoidal category). 

Notice that exactness of the above tensor product functors in the definition of an abelian $k$-linear monoidal category is not always assumed in the literature, but it is convenient, in order to simplify the terminology, to include these conditions as part of our axioms.

\bp \label{prop:modabelian}
Let $\CC$ be an abelian $k$-linear monoidal category, and let $A$ be an algebra in $\CC$. The categories $_A\CC$, $\CC_A$ and $_A\CC_A$ are all abelian $k$-linear, and have enough projective objects if $\CC$ has.
\ep

\bpf
That $_A\CC$, $\CC_A$ and $_A\CC_A$ are all abelian is proved for example in \cite{ardizzoni2004category}, and that these categories have enough projective objects if $\CC$ has follows from the combination of Proposition \ref{prop:adjmod} and of Proposition \ref{prop:abelian}.
\epf

We now specialize to the abelian $k$-linear monoidal category $\mathcal M^H$ with $H$ a bialgebra.
Let $A$ be a right $H$-comodule algebra. Then (see \cite{caenepeel2004frobenius} or \cite{bichon2022monoidal})
the forgetful functor $\Omega_{H}: \,_{A}\mathcal M^{H}_{A} \longrightarrow \,_{A}\mathcal M_{A}$ has a right adjoint
\begin{align*}
R: \,_{A}\mathcal M_{A} &\longrightarrow \,_{A}\mathcal M^{H}_{A}\\
V &\longmapsto V \odot H
\end{align*}
where $V \odot H$ is $V \otimes H$ as vector space, its $H$-bimodule structure is given by
\begin{align*}
x \cdot (v \otimes a) = x_{(0)} \cdot v \otimes x_{(1)}h, \hspace*{2cm}(v \otimes a)\cdot x = v \cdot x_{(0)} \otimes hx_{(1)}
\end{align*}
and its $H$-comodule structure is induced by the comultiplication of $H$. Similarly, if $V$ is a left (resp. right) $A$-module, when endowing $V \otimes H$ with only the above left (resp. right) $A$-module structure, we denote it by $V \boxdot H$ (resp. $V \diamonddot$ H), and obtain an object in $\,_{A}\MM^{H}$ (resp. in $\mathcal M_A^H$), and this defines a functor $\,_{A}\mathcal M \longrightarrow \,_{A}\mathcal M^{H}$ (resp.  $\mathcal M_A \longrightarrow \mathcal M_A^{H}$) which is right adjoint to the forgetful functor $\Omega_{H}: \,_{A}\mathcal M^{H} \longrightarrow \,_{A}\mathcal M$ (resp. $\Omega_{H}: \mathcal M_A^{H} \longrightarrow \mathcal M_A$).

\bp\label{prop:pdHopfmod}
Let $H$ be a bialgebra and let $A$ be a right $H$-comodule algebra. Then the categories  $_A\mathcal M^H$, $\mathcal M^H_A$ and $_A\mathcal M^H_A$ are all abelian $k$-linear categories having enough injectives, and have enough projectives if $\mathcal M^H$ has. We have, for any object $V$ in $_{A}\MM^{H}_A$ (resp. in $_A\MM^H$, resp. in $\mathcal M_A^H$) and any $A$-bimodule (resp. any left $A$-module, resp. any right $A$-module) $W$, natural isomorphisms
\begin{align}
\e^{*}_{_{A}\MM_{A}}(\Omega_{H}(V), W) &\simeq \e^{*}_{_{A}\MM^{H}_{A}}(V, W \odot  H) \label{forget-H}\\
 (\text{resp.} \ \e^{*}_{_A\MM}(\Omega_{H}(V), W) &\simeq \e^{*}_{_{A}\MM^{H}}(V, W \boxdot  H) \label{forget-H2} ) \\
(\text{resp.} \ \e^{*}_{\MM_{A}}(\Omega_{H}(V), W) &\simeq \e^{*}_{\MM^{H}_{A}}(V, W \diamonddot  H)).
\end{align}
In particular, if $H$ is a cosemisimple Hopf algebra,  the categories  $_A\mathcal M^H$, $\mathcal M^H_A$ and $_A\mathcal M^H_A$ are abelian $k$-linear categories having enough projectives, and we have for any object $V$ in  $_A\mathcal M^H$ (resp. in $\mathcal M^H_A$), 
\[\pd_{_A\mathcal M^H}(V) = \pd_{_A\mathcal M}(V), \ (\textrm{resp.} \ \pd_{\mathcal M^H_A}(V) = \pd_{\mathcal M_A}(V) ).\] 
\ep

\bpf
Our categories are abelian by Proposition \ref{prop:modabelian}, and the remaining statements follow,  by Proposition \ref{prop:abelian}, from the existence of previous adjoint functors and the fact that the categories $_{A}\MM_{A}^{H}$, $_{A}\MM^{H}$ and $\mathcal M_A^H$ have enough injectives, and from  Proposition \ref{prop:pdseparable}.
\epf

\section{Modules and bimodules over a braided Hopf algebra and projective dimensions}\label{sec:modbimod}

In this section we prove our result on the comparison of the global dimension and the Hochschild cohomological dimension for some braided Hopf algebras.
We begin by examining the relations between modules and bimodules over a braided Hopf algebra. 

\bp\label{prop:lefttobim}
Let $\CC$ be a braided category and let $A$ be a bialgebra in $\CC$. Let $V$ be a left $A$-module in $\CC$. Endow $V \otimes A$ with the right $A$-module structure defined by right multiplication. Then the morphism
\[\begin{tikzpicture}[scale=0.45]
\draw(-1,0)--(4,0);
\draw(-1,-3.5)--(4,-3.5);
\draw(3,0)--(3,-2.5);
\draw[above](0.5,0) node{$A$};
\draw[above](2,0) node{$V$};
\draw (2,0)--(2,-1);
\draw (2,-2)--(2,-2.5);
\draw[above](3,0) node{$A$};
\coproduit{1}{-1}{0.5}
\smallbraid{2}{-1}
\produit{2}{-2.5}{0.5}
\gauche{1}{-2}
\draw(0,-1) ..controls +(0,-0.7) and +(0,0.7).. (-1,-2);
\draw[below](-0.5,-3.5) node{$V$};
\draw[below](2.5,-3.5) node{$A$};
\draw(-3.5, -2) node{$\mu^{l}_{V \otimes A} =$};
\end{tikzpicture}\]
provides $V \otimes A$ with a left $A$-module structure, hence with an $A$-bimodule structure in $\CC$. Denoting the resulting $A$-bimodule by $V \boxtimes A$, this construction yields a functor
\begin{align*}
L = - \boxtimes A: \, _{A}\CC &\longrightarrow \,_{A}\CC_{A}\\
V& \longmapsto V \boxtimes A.
\end{align*}
\ep

\bpf
We verify that $\mu^{l}_{V \otimes A}$ is indeed a left $A$-module structure on $V\otimes A$, which means, \[\mu^{l}_{V \otimes A} \circ (m_{A} \otimes \id_{V \otimes A}) = \mu^{l}_{V \otimes A} \circ (\id_{A} \otimes \mu^{l}_{V \otimes A}):\]
\begin{center}
\begin{tikzpicture}[scale=0.45]
\draw(0,0)--(5,0);
\draw[above](1,0) node{$A$};
\draw[above](2,0) node{$A$};
\draw[above](3,0) node{$V$};
\draw[above](4,0) node{$A$};
\produit{1}{0}{0.5}
\coproduit{2}{-1.5}{0.5}
\draw(3,0)--(3,-1.5);
\draw(1,-1.5) ..controls +(0,-0.7) and +(0,0.7).. (0,-2.5);
\draw(4,0)--(4,-2.5);
\smallbraid{3}{-1.5}
\produit{3}{-2.5}{0.5}
\gauche{2}{-2.5}
\draw(3.5,-3.5)--(3.5,-4);
\draw(0,-4)--(5,-4);
\draw[below](0.5,-4) node{$V$};
\draw[below](3.5,-4) node{$A$};
\draw(6,-2) node{$=$};
\draw[above](6,-2) node{$\eqref{Hopf}$};
\draw(7,0)--(13,0);
\draw[above](8,0) node{$A$};
\draw[above](10,0) node{$A$};
\draw[above](11.5,0) node{$V$};
\draw[above](12.5,0) node{$A$};
\produit{1}{0}{0.5}
\coproduit{8.5}{-1}{0.5}
\coproduit{10.5}{-1}{0.5}
\draw(11.5,0)--(11.5,-2);
\draw(11.5,-2) ..controls +(0,-0.7) and +(0,0.7).. (11,-3);
\draw(12.5,0)--(12.5,-3);
\smallbraid{9.5}{-1}
\produit{9.5}{-2}{0.5}
\produit{7.5}{-2}{0.5}
\draw(7.5,-1)--(7.5,-2);
\draw(10.5,-1)--(10.5,-2);
\smallbraid{11}{-3}
\produit{11}{-4}{0.5}
\draw(12.5,-3) ..controls +(0,-0.7) and +(0,0.7).. (12,-4);
\gauche{10}{-4}
\draw(8,-3)--(8,-4);
\draw(7,-5.5)--(13,-5.5);
\draw(11.5,-5)--(11.5,-5.5);
\draw[below](8.5,-5.5) node{$V$};
\draw[below](11.5,-5.5) node{$A$};
\draw(14,-3) node{$=$};
\draw(15,0)--(21,0);
\draw[above](16,0) node{$A$};
\draw[above](18,0) node{$A$};
\draw[above](19.5,0) node{$V$};
\draw[above](20.5,0) node{$A$};
\coproduit{16.5}{-1}{0.5}
\coproduit{18.5}{-1}{0.5}
\draw(19.5,0)--(19.5,-1);
\draw(17.5,-3) ..controls +(0,-0.7) and +(0,0.7).. (18,-4);
\draw(20.5,0)--(20.5,-3);
\smallbraid{17.5}{-1}
\produit{18.5}{-3}{0.5}
\draw(19.5,-2)--(19.5,-3);
\produit{15.5}{-2}{0.5}
\draw(15.5,-1)--(15.5,-2);
\smallbraid{19.5}{-1}
\smallbraid{18.5}{-2}
\produit{19}{-4}{0.5}
\draw(20.5,-3) ..controls +(0,-0.7) and +(0,0.7).. (20,-4);
\gauche{18}{-4}
\draw(16,-3)--(16,-4);
\draw(15,-5.5)--(21,-5.5);
\draw(19.5,-5)--(19.5,-5.5);
\draw[below](16.5,-5.5) node{$V$};
\draw[below](19.5,-5.5) node{$A$};
%
\draw(-2,-12) node{$=$};
\draw[above](-2,-12) node{$\eqref{associatif}$};
\draw(-0.5,-8)--(6,-8);
\draw[above](1,-8) node{$A$} ;
\draw[above](3,-8) node{$A$};
\draw[above](4.5,-8) node{$V$};
\draw[above](5.5,-8) node{$A$};
\coproduit{1.5}{-9}{0.5}
\coproduit{3.5}{-9}{0.5}
\draw(4.5,-8)--(4.5,-9);
\draw(3.5,-11) ..controls +(0,-0.3) and +(0,0.3).. (4.,-11.5);
\draw(2.5,-11) ..controls +(0,-0.2) and +(0,0.2).. (3.,-11.5);
\draw(1.5,-10) ..controls +(0,-1) and +(0,1).. (1.,-11.5);
\draw(5.5,-8)--(5.5,-10);
\smallbraid{2.5}{-9}
\produit{4.5}{-10}{0.5}
\draw(5,-11)--(5,-11.5);
\produit{4}{-11.5}{0.5}
\draw(0.5,-9)--(0.5,-11);
\smallbraid{4.5}{-9}
\smallbraid{3.5}{-10}
\draw(0.5,-11) ..controls +(0,-1) and +(0,1).. (-0.5,-12.5);
\gauche{3}{-11.5}
\gauche{1.5}{-12.5}
\draw(-0.5,-14)--(6,-14);
\draw(4.5,-12.5)--(4.5,-14);
\draw[below](0,-14) node{$V$};
\draw[below](4.5,-14) node{$A$};
\draw(7,-12) node{$=$};
\draw(8,-8)--(15,-8);
\draw[above](9.5,-8) node{$A$} ;
\draw[above](12,-8) node{$A$};
\draw[above](13.5,-8) node{$V$};
\draw[above](14.5,-8) node{$A$};
\coproduit{12.5}{-9}{0.5}
\draw(13.5,-8)--(13.5,-9);
\smallbraid{13.5}{-9}
\produit{13.5}{-10}{0.5}
\draw(14.5,-8)--(14.5,-10);
\gauche{12.5}{-10}
\draw(11.5,-9) ..controls +(0,-1) and +(0,1).. (10.5,-10);
\draw(9.5,-8)--(9.5,-10.5);
\coproduit{10}{-11.5}{0.5}
\smallbraid{11}{-11.5}
\gauche{10}{-12.5}
\draw(9,-11.5) ..controls +(0,-1) and +(0,1).. (8,-12.5);
\draw(11,-12.5) ..controls +(0,-0.5) and +(0,0.5).. (12,-13);
\draw(14,-11) ..controls +(0,-1) and +(0,1).. (13,-13);
\produit{12}{-13}{0.5}
\draw(8,-14)--(15,-14);
\draw[below](8.5,-14)node {$V$};
\draw[below](12.5,-14)node {$A$};
\draw(16,-12) node{$.$};
\end{tikzpicture}
\end{center}

and $\mu^{l}_{V \otimes A} \circ (\eta_{A} \otimes \id_{V \otimes A}) = \id_{V \otimes A}:$

\begin{center}
\begin{tikzpicture}[scale=0.45]
\draw(0.5,0)--(5,0);
\draw(2,0)[above] node{$A$};
\draw(3.5,0)[above] node{$V$};
\draw(4.5,0)[above] node{$A$};
\draw(2,-0.5) node{$\bullet$};
\coproduit{2.5}{-1.5}{0.5}
\draw(3.5, 0)--(3.5,-1.5);
\smallbraid{3.5}{-1.5}
\gauche{2.5}{-2.5}
\produit{3.5}{-3}{0.5}
\draw(3.5, -2.5)--(3.5,-3);
\draw(4.5, 0)--(4.5,-3);
\draw(0.5,-4)--(5,-4);
\draw(1.5,-1.5) ..controls +(0,-0.7) and +(0,0.7).. (0.5,-2.5);
\draw(1,-4)[below] node{$V$};
\draw(4,-4)[below] node{$A$};
\draw(6,-2.5) node{$=$};
\draw(6,-2.5)[above] node{$\eqref{Hopf}$};
\draw(7.5,0)--(12,0);
\draw(9,0)[above] node{$A$};
\draw(10.5,0)[above] node{$V$};
\draw(11.5,0)[above] node{$A$};
\draw(8.5,-0.5) node{$\bullet$};
\draw(9.5,-0.5) node{$\bullet$};
\draw(10.5, 0)--(10.5,-1.5);
\draw(8.5, -0.5)--(8.5,-1.5);
\draw(9.5, -0.5)--(9.5,-1.5);
\smallbraid{10.5}{-1.5}
\gauche{9.5}{-2.5}
\produit{10.5}{-3}{0.5}
\draw(10.5, -2.5)--(10.5,-3);
\draw(11.5, 0)--(11.5,-3);
\draw(7.5,-4)--(12,-4);
\draw(8.5,-1.5) ..controls +(0,-0.7) and +(0,0.7).. (7.5,-2.5);
\draw(8,-4)[below] node{$V$};
\draw(11,-4)[below] node{$A$};
\draw(13,-2.5) node{$=$};
%
\draw(14.5,-1)--(16.5,-1);
\draw(15,-1)[above] node{$V$};
\draw(16,-1)[above] node{$A$};
\draw(15,-2.5)[below] node{$V$};
\draw(16,-2.5)[below] node{$A$};
\draw(14.5,-2.5)--(16.5,-2.5);
\draw(15,-1)--(15,-2.5);
\draw(16,-1)--(16,-2.5);
\draw(17,-2.5) node{$.$};
\end{tikzpicture}
\end{center}
Thus, we need only check the compatility of the two structures in order to conclude that $V\boxtimes A$ is a well-defined $A$-bimodule and we leave this verification to the reader. Now, let $f \in \Hom_{_{A}\CC}(V, W)$, we see that $f \otimes \id_{A} \in \Hom_{_{A}\CC}(V \boxtimes A, W \boxtimes A)$:
\begin{center}
\begin{tikzpicture}[scale=0.5]
\draw(-1,0)--(4,0);
\draw(-1,-4.5)--(4,-4.5);
\draw(3,0)--(3,-2.5);
\draw[above](0.5,0) node{$A$};
\draw[above](2,0) node{$V$};
\draw (2,0)--(2,-1);
\draw (2,-2)--(2,-2.5);
\draw[above](3,0) node{$A$};
\coproduit{1}{-1}{0.5}
\smallbraid{2}{-1}
\produit{2}{-2.5}{0.5}
\gauche{1}{-2}
\draw (-.5,-3)-- (-.5,-4.5);
\draw (2.5,-3)-- (2.5,-4.5);
\draw(0,-1) ..controls +(0,-0.7) and +(0,0.7).. (-1,-2);
\draw [fill=white] (-.5,-3.7) circle (0.4);
\draw (-0.5,-3.7) node {\footnotesize{$f$}};
\draw[below](-0.5,-4.5) node{$V$};
\draw[below](2.5,-4.5) node{$A$};
\draw[below](5,-2) node{$=$};
\draw(6,0)--(11,0);
\draw(6,-4.5)--(11,-4.5);
\draw(10,0)--(10,-2.5);
\draw[above](7.5,0) node{$A$};
\draw[above](9,0) node{$V$};
\draw (9,0)--(9,-1);
\draw (9,-2)--(9,-2.5);
\draw[above](10,0) node{$A$};
\coproduit{8}{-1}{0.5}
\smallbraid{9}{-1}
\produit{9}{-2.5}{0.5}
\gauche{8}{-3}
\draw (6,-2.5)-- (6,-3);
\draw (8,-2)-- (8,-3);
\draw (9.5,-3)-- (9.5,-4.5);
\draw(7,-1) ..controls +(0,-0.7) and +(0,0.7).. (6,-2.5);
\draw [fill=white] (8,-3) circle (0.4);
\draw[below](6.5,-4.5) node{$V$};
\draw[below](9.5,-4.5) node{$A$};
\draw[below](12,-2) node{$=$};
\draw (8,-3) node {\footnotesize{$f$}};
\draw(13,0)--(18,0);
\draw(13,-4.5)--(18,-4.5);
\draw(14.5,0)--(14.5,-1);
\produit{16}{-3}{0.5}
\draw(17,0)--(17,-3);
\draw[above](14.5,0) node{$A$};
\draw[above](16,0) node{$V$};
\draw[above](17,0) node{$A$};
\draw(16,0)--(16,-2);
\smallbraid{16}{-2}
\coproduit{15}{-2}{0.5}
\draw [fill=white] (16,-1) circle (0.4);
\draw (16,-1) node {\footnotesize{$f$}};
\gauche{15}{-3}
\draw(14,-2) ..controls +(0,-0.7) and +(0,0.7).. (13,-3);
\draw(16.5,-4)--(16.5,-4.5);
\draw[below](16.5,-4.5) node{$A$};
\draw[below](13.5,-4.5) node{$V$};
\draw[below](19,-2) node{$.$};
\end{tikzpicture}
\end{center}
Similarly, we also observe that $f \otimes \text{id}_{A}$ is a morphism in $\mathcal{C}_{A}$. Consequently, it is a morphism in $_{A}\CC_{A}$, implying that $L$ defines a functor.
\epf

The following is \cite[Proposition 3.7.1]{heckenberger2020hopf}, we include the proof for the sake of completeness.

\bp \label{prop:bimtoleft}
Let $\CC$ be a braided category and $A$ be a Hopf algebra in $\CC$. Let $M$ be an $A$-bimodule in $\CC$, the morphism
\[
\begin{tikzpicture}[scale=0.45, base align tikzpicture]
\draw(-1.5,-3) node{$\mu^{l}_{\widetilde{M}} =$};
\draw(0,0)--(3.5,0);
\draw(0,-6.2)--(3.5,-6.2);
\coproduit{1}{-1}{0.5}
\draw[above](0.5, 0) node{$A$};
\draw[above](3,0) node{$M$};
\draw[below](2,-6.2) node{$M$};
\draw(1, -1)--(1, -2);
\draw(3, 0)--(3, -2.7);
\draw(0, -1)--(0, -3.7);
 \draw [fill=white] (1,-1.5) circle (0.4);
 \draw (1,-1.5) node {\footnotesize{$S$}};
 \smallbraid{3}{-2.7}
 \gauche{2}{-3.7}
\draw(1,-2) ..controls +(0,-0.4) and +(0,0.4).. (2,-2.7);
\droite{2.5}{-4.7}
\draw(3,-3.7) ..controls +(0,-0.7) and +(0,0.7).. (2.5,-4.7);
\end{tikzpicture}
\]
endows $M$ with a left $A$-module structure in $\CC$. We then denote by $\widetilde{M}$ the resulting left $A$-module. This construction defines a functor
\begin{align*}
R:{}_{A}\CC_{A} &\longrightarrow {}_{A}\CC\\
M &\mapsto \widetilde{M}
\end{align*}
\ep

\bpf
We begin by showing that $\mu^{l}_{\widetilde{M}} \circ (m_{A} \otimes \id_{\widetilde{M}}) = \mu^{l}_{\widetilde{M}} \circ (\id_{A} \otimes \mu^{l}_{\widetilde{M}}):$
\[
\begin{tikzpicture}[scale=0.45]
\draw(-0.5,0)--(3.5,0);
\draw(-0.5,-5.5)--(3.5,-5.5);
\produit{.5}{0}{0.5}
\draw[above](0.5, 0) node{$A$};
\draw[above](2.7,0) node{$M$};
\draw[above](1.5,0) node{$A$};
\coproduit{1.5}{-1.5}{.5}
\draw(2.5,0)--(2.5,-2.1);
\smallbraid{2.5}{-2.1}
\draw [fill=white] (1.5,-1.8) circle (0.35);
\draw (1.5,-1.8) node {\footnotesize{$S$}};
\draw(0.5,-1.5) ..controls +(0,-1) and +(0,1).. (-0.5,-3);
\draw(2.5,-3) ..controls +(0,-0.7) and +(0,0.7).. (2,-4);
\gauche{1.5}{-3}
\droite{2}{-4}
\draw[below](1.5,-5.5) node{$M$};
\draw[below](5,-3) node{\footnotesize{$=$}};
\draw[above](5,-3) node{{$\eqref{Hopf}$}};
%
\draw(7,0)--(12,0);
\draw(7,-7.5)--(12,-7.5);
\draw[above](8, 0) node{$A$};
\draw[above](11.2,0) node{$M$};
\draw[above](10,0) node{$A$};
\coproduit{8.5}{-1}{0.5}
\coproduit{10.5}{-1}{0.5}
\smallbraid{9.5}{-1}
\produit{9.5}{-2}{0.5}
\produit{7.5}{-2}{0.5}
\draw(7.5, -1)--(7.5,-2);
\draw(10.5, -1)--(10.5,-2);
\smallbraid{11}{-3.5}
\draw [fill=white] (10,-3.2) circle (0.35);
\draw (10,-3.2) node {\footnotesize{$S$}};
\draw(11,0)--(11, -3.5);
\gauche{10}{-4.5}
\droite{10.5}{-6}
\draw(8, -2.5)--(8,-4.5);
\draw(11,-4.5) ..controls +(0,-0.7) and +(0,0.7).. (10.5,-6);
\draw[below](10,-7.5) node{$M$};
\draw[below](14,-3) node{\footnotesize{$=$}};
\draw[above](14,-3) node{{$\eqref{i}$}};
\draw(16,0)--(21,0);
\draw(16,-8)--(21,-8);
\draw[above](17, 0) node{$A$};
\draw[above](20.5,0) node{$M$};
\draw[above](19,0) node{$A$};
\coproduit{17.5}{-1}{0.5}
\coproduit{19.5}{-1}{0.5}
\smallbraid{18.5}{-1}
\produit{16.5}{-2}{0.5}
\draw(16.5, -1)--(16.5,-2);
\draw(19.5, -1)--(19.5,-2);
\smallbraid{19.5}{-2.7}
\draw [fill=white] (18.5,-2.4) circle (0.35);
\draw (18.5,-2.4) node {\footnotesize{$S$}};
\draw [fill=white] (19.5,-2.4) circle (0.35);
\draw (19.5,-2.4) node {\footnotesize{$S$}};
\produit{18.5}{-3.7}{0.5}
\smallbraid{20}{-4.5}
\draw(20.5, 0)--(20.5,-3.5);
\gauche{19}{-5.5}
\droite{19.5}{-6.5}
\draw(17, -2.5)--(17,-5.5);
\draw(20,-5.5) ..controls +(0,-0.7) and +(0,0.7).. (19.5,-6.5);
\draw(20.5,-3.5) ..controls +(0,-0.7) and +(0,0.7).. (20,-4.5);
\draw[below](19,-8) node{$M$};
\draw(-3,-13) node{$=$};
\draw[above](-3,-13) node{\eqref{l}};
\draw(-2,-10)--(4,-10);
\draw(-2,-20)--(4,-20);
\draw[above](0, -10) node{$A$};
\draw[above](3.5,-10) node{$M$};
\draw[above](2,-10) node{$A$};
\coproduit{0.5}{-11}{0.5}
\coproduit{2.5}{-11}{0.5}
\smallbraid{1.5}{-11}
\draw(-0.5, -11)--(-0.5,-12);
\draw(2.5, -11)--(2.5,-12);
\smallbraid{3.5}{-13.7}
\smallbraid{2.5}{-12.7}
\draw [fill=white] (1.5,-12.4) circle (0.35);
\draw (1.5,-12.4) node {\footnotesize{$S$}};
\draw [fill=white] (2.5,-12.4) circle (0.35);
\draw (2.5,-12.4) node {\footnotesize{$S$}};
\draw(3.5, -10)--(3.5,-13.7);
\produit{2.5}{-15.7}{0.5}
\smallbraid{2.5}{-14.7}
\draw(3.5, -14.7)--(3.5,-15.7);
\draw(1.5, -13.7)--(1.5,-14.7);
\draw(0.5, -12)--(0.5,-14.7);
\gauche{1.5}{-15.7}
\draw(0.5,-14.7) ..controls +(0,-0.7) and +(0,0.7).. (-0.5,-15.7);
\gauche{0}{-16.7}
\draw(-0.5, -12)--(-0.5,-13.5);
\draw(-0.5,-13.5) ..controls +(0,-1) and +(0,1).. (-2,-15);
\draw(-2, -15)--(-2,-16.7);
\draw(3,-16.5) ..controls +(0,-1) and +(0,1).. (2,-18.5);
\draw(-1.5,-18.2) ..controls +(0,-0.3) and +(0,0.3).. (0,-18.5);
\droite{2}{-18.5}
\draw[below](1.5,-20) node{$M$};
\draw[below](5.5,-13) node{\footnotesize{$=$}};
\draw(7,-10)--(13,-10);
\draw(7,-20)--(13,-20);
\draw[above](9, -10) node{$A$};
\draw[above](12.5,-10) node{$M$};
\draw[above](11,-10) node{$A$};
\coproduit{9.5}{-11}{0.5}
\coproduit{11.5}{-11}{0.5}
\smallbraid{10.5}{-11}
\draw(8.5, -11)--(8.5,-12);
\draw(11.5, -11)--(11.5,-12);
\smallbraid{12.5}{-12.7}
\smallbraid{11.5}{-13.7}
\draw [fill=white] (10.5,-12.4) circle (0.35);
\draw (10.5,-12.4) node {\footnotesize{$S$}};
\draw [fill=white] (11.5,-12.4) circle (0.35);
\draw (11.5,-12.4) node {\footnotesize{$S$}};
\draw(10.5, -12.7)--(10.5,-13.7);
\draw(12.5, -10)--(12.5,-12.7);
\smallbraid{12.5}{-14.7}
\produit{11.5}{-15.7}{0.5}
\draw(12.5, -13.7)--(12.5,-14.7);
\gauche{10.5}{-14.7}
\draw(9.5, -12)--(9.5,-13.7);
\draw(9.5,-13.7) ..controls +(0,-0.7) and +(0,0.7).. (8.5,-14.7);
\gauche{9}{-15.7}
\draw(8.5, -11)--(8.5,-12.5);
\draw(8.5,-13.5) ..controls +(0,-1) and +(0,1).. (7,-15);
\draw(7, -15)--(7,-15.7);
\draw(8.5, -12.5)--(8.5,-13.5);
\draw(12,-16.5) ..controls +(0,-1) and +(0,1).. (11,-18.5);
\draw(7.5,-17) ..controls +(0,-1) and +(0,1).. (9,-18.5);
\droite{11}{-18.5}
\draw[below](10.5,-20) node{$M$};
\draw[below](14,-13) node{\footnotesize{$=$}};
\draw(16,-10)--(22,-10);
\draw(16,-20)--(22,-20);
\draw[above](18, -10) node{$A$};
\draw[above](21.5,-10) node{$M$};
\draw[above](20,-10) node{$A$};
\coproduit{18.5}{-11}{0.5}
\coproduit{20.5}{-11}{0.5}
\smallbraid{19.5}{-11.7}
\draw(17.5, -11)--(17.5,-12);
\draw(20.5, -11)--(20.5,-12);
\smallbraid{21.5}{-11.7}
\smallbraid{20.5}{-12.7}
\draw [fill=white] (18.5,-11.4) circle (0.35);
\draw (18.5,-11.4) node {\footnotesize{$S$}};
\draw [fill=white] (20.5,-11.4) circle (0.35);
\draw (20.5,-11.4) node {\footnotesize{$S$}};
\draw(21.5, -10)--(21.5,-11.7);
\draw(21.5, -12.7)--(21.5,-13.7);
\produit{20.5}{-15.7}{0.5}
\smallbraid{21.5}{-13.7}
\draw(21.5, -14.7)--(21.5,-15.7);
\draw(19.5, -13.7)--(19.5,-14.7);
\draw(18.5, -12.7)--(18.5,-13.7);
\draw(19.5, -11)--(19.5,-11.7);
\draw(20.5, -14.7)--(20.5,-15.7);
\gauche{19.5}{-14.7}
\draw(18.5,-13.7) ..controls +(0,-0.7) and +(0,0.7).. (17.5,-14.7);
\gauche{18}{-15.7}
\draw(17.5, -12)--(17.5,-13);
\draw(17.5,-13) ..controls +(0,-1) and +(0,1).. (16,-15);
\draw(16, -15)--(16,-16);
\draw(21,-16.7) ..controls +(0,-1) and +(0,1).. (20,-18.5);
\draw(16.5,-17.2) ..controls +(0,-1) and +(0,1).. (18,-18.5);
\droite{20}{-18.5}
\draw[below](19.5,-20) node{$M$};
\end{tikzpicture}
\]
\[
\begin{tikzpicture}[scale=0.4]
\draw (6,-34.5) node {$= $};
\draw[above] (6,-34.5) node {$\eqref{l}$};
\draw(9, -29)--(16,-29);
\draw (9,-39.5)--(16,-39.5);
                   \draw[above] (10,-29) node {$A$};
                    \draw[above] (12.5,-29) node {$A$};
                    \draw[above] (14.5,-29) node {$M$};
                    \draw[below] (12.5,-39.5) node {$M$};
        \draw(14.5,-29)--(14.5, -32);
        \coproduit{10.5}{-30}{0.5}
        \coproduit{13}{-30}{0.5}
        \draw(10.5,-30)--(10.5, -33.5);
        \draw(12,-30)--(12, -31);
        \draw(13,-30)--(13, -31);
\draw(12, -31) ..controls +(0,-1.5) and +(0,1.5).. (11.5, -33);
        \smallbraid{14.5}{-32}
        \draw(13, -31) ..controls +(0,-0.7) and +(0,0.6).. (13.5, -32);
        \gauche{13.5}{-33}
        \smallbraid{12}{-34.5}
         \draw(10.5, -33.5) ..controls +(0,-0.7) and +(0,0.6).. (11, -34.5);
        \gauche{11}{-35.5}
        \draw(9.5, -30) ..controls +(0,-1.5) and +(0,1.5).. (9, -32);
        \draw(9,-32)--(9,-35.5);
        
        \draw [fill=white] (13,-30.8) circle (0.4);
         \draw [fill=white] (10.5,-31.3) circle (0.4);
                      \draw (13,-30.8) node {\footnotesize{$S$}};
                     \draw (10.5,-31.3) node {\footnotesize{$S$}};
            \draw (17,-34.5) node {$= $};
            \draw(14.5,-33)--(14.5,-34.5);
       \draw(14.5, -34.5) ..controls +(0,-0.7) and +(0,0.6).. (13, -35.5);
       \draw(12, -36.5) ..controls +(0,-0.4) and +(0,0.4).. (11.5, -37);
       \smallbraid{13}{-35.5}
      \droite{11.5}{-37}
      \droite{13}{-38}
      \draw(13, -36.5)--(13, -38);
      
\draw(19, -29)--(26,-29);
\draw (19,-40.5)--(26,-40.5);
                   \draw[above] (20,-29) node {$A$};
                    \draw[above] (22.5,-29) node {$A$};
                    \draw[above] (24.5,-29) node {$M$};
                    \draw[below] (21.5,-40.5) node {$M$};
        \draw(20,-29)--(20, -34);
        \coproduit{20.5}{-35}{0.5}
        \coproduit{23}{-30}{0.5}
        \draw(20.5,-35)--(20.5, -36);
        \draw(22,-30)--(22, -31);
        \draw(23,-30)--(23, -31);
\draw(22, -31) ..controls +(0,-1) and +(0,1).. (21.5, -32.5);
        \smallbraid{24.5}{-31.5}
\draw(23, -31) ..controls +(0,-0.6) and +(0,0.3).. (23.5, -31.5);
        \gauche{23.5}{-32.5}
        \droite{24}{-33.5}
        \smallbraid{22.5}{-36.5}
        \draw(24.5, -32.5) ..controls +(0,-0.7) and +(0,0.6).. (24, -33.5);
        \gauche{21.5}{-37.5}
       \draw(23.5, -35) ..controls +(0,-1) and +(0,1).. (22.5, -36.5);
       \draw(24.5,-29)--(24.5,-31.5);
        \draw(19.5,-35)--(19.5,-37.5);
        \draw [fill=white] (23,-30.8) circle (0.4);
         \draw [fill=white] (20.5,-35.5) circle (0.4);
                      \draw (23,-30.8) node {\footnotesize{$S$}};
                     \draw (20.5,-35.5) node {\footnotesize{$S$}};
\droite{22}{-39}
\draw(22.5, -37.5) ..controls +(0,-1) and +(0,1).. (22, -39);
\draw(20.5, -36) ..controls +(0,-0.6) and +(0,0.3).. (21.5, -36.5);
\draw (27,-39) node {.};
\end{tikzpicture}
\]
Then, \quad
\begin{tikzpicture}[scale=0.45, base align tikzpicture]
\draw(0,1.2)--(3.5,1.2);
\draw(0,-6.2)--(3.5,-6.2);
\draw(0.5, 0.2)--(0.5, 0);
\coproduit{1}{-1}{0.5}
\draw(0.5, 0.2) node{$\bullet$};
\draw[above](3,1.2) node{$M$};
\draw[below](2,-6.2) node{$M$};
\draw(1, -1)--(1, -2);
\draw(3, 1.2)--(3, -2.7);
\draw(0, -1)--(0, -3.7);
 \draw [fill=white] (1,-1.5) circle (0.4);
 \draw (1,-1.5) node {\footnotesize{$S$}};
 \smallbraid{3}{-2.7}
 \gauche{2}{-3.7}
\draw(1,-2) ..controls +(0,-0.4) and +(0,0.4).. (2,-2.7);
\droite{2.5}{-4.7}
\draw(3,-3.7) ..controls +(0,-0.7) and +(0,0.7).. (2.5,-4.7);
\draw(4.5, -3) node{$=$};
\draw[above](4.5, -3) node{$\eqref{Hopf}$};
\draw(5.5,1.2)--(9.5,1.2);
\draw(5.5,-6.2)--(9.5,-6.2);
\draw(6, 0.2)--(6, -1);
\draw(7, 0.2)--(7, -1);
\draw(6, 0.2) node{$\bullet$};
\draw(7, 0.2) node{$\bullet$};
\draw[above](9,1.2) node{$M$};
\draw[below](8,-6.2) node{$M$};
\draw(7, -1)--(7, -2);
\draw(9, 1.2)--(9, -2.7);
\draw(6, -1)--(6, -3.7);
 \draw [fill=white] (7,-1.3) circle (0.4);
 \draw (7,-1.3) node {\footnotesize{$S$}};
 \smallbraid{9}{-2.7}
 \gauche{8}{-3.7}
\draw(7,-2) ..controls +(0,-0.4) and +(0,0.4).. (8,-2.7);
\droite{8.5}{-4.7}
\draw(9,-3.7) ..controls +(0,-0.7) and +(0,0.7).. (8.5,-4.7);
\draw(10.5, -3) node{$=$};
\draw[above](10.5, -3) node{$\eqref{i}$};
\draw(11.5,0)--(15.5,0);
\draw(11.5,-5)--(15.5,-5);
\draw(12, -0.7) node{$\bullet$};
\draw(15, -0.7) node{$\bullet$};
\draw[above](13.5,0) node{$M$};
\draw[below](12.5,-5) node{$M$};
\draw(13.5, 0)--(13.5, -2);
\draw(15, -0.7)--(15, -2);
\gauche{13.5}{-2}
\draw(12,-0.7) ..controls +(0,-1) and +(0,1).. (11.5,-2);
\gauche{14}{-3.5}
\draw(15,-2) ..controls +(0,-1) and +(0,1).. (14,-3.5);
\draw(16.5, -3) node{$=$};
\draw(17.5,-1.7)--(19,-1.7);
\draw(17.5,-3.7)--(19,-3.7);
\draw(18.25,-1.7)--(18.25,-3.7);
\draw[above](18.25, -1.7) node{$M$};
\draw[below](18.25,-3.7) node{$M$};
\end{tikzpicture}, that is, $\mu^{l}_{\widetilde{M}} \circ (\eta_{A} \otimes \id_{M}) = \id_{M}$. This finishes our proof.
\epf

\bp\label{prop:adjointmodulebimodule}
Let $\CC$ be a braided category and $A$ be a Hopf algebra in $\CC$. Then the  functor $R:{}_{A}\CC_{A} \longrightarrow {}_{A}\CC$ is right adjoint to the functor $L = - \boxtimes A : \, _{A}\CC \longrightarrow \,_{A}\CC_{A}$.
\ep

\bpf Let $V \in {}_{A}\CC$ and $M \in {}_{A}\CC_{A}$. Consider
\begin{align*}
\Phi_{V,M} : \text{Hom}_{_{A}\CC_{A}}(V \boxtimes A, M) &\longrightarrow \text{Hom}_{_{A}\CC}(V, \widetilde{M})\\
f &\longmapsto \tilde{f} = f\circ (\id_{V} \otimes \eta_{A}).
\end{align*}
We verify that $\tilde{f}$ is well defined as a morphism in $_{A}\CC$, which means $\mu^{l}_{\widetilde{M}} \circ (\id_{A} \otimes \tilde{f}) = \tilde{f} \circ \mu^{l}_{V}:$
\[
\begin{tikzpicture}[scale=0.45]
\draw(0,0)--(4.5,0);
\draw(0,-6)--(4.5,-6);
\draw(3.9,-0.7)--(3.9,-1);
\draw(1, 0)--(1,-1);
\draw(3.9, -0.7) node{$\bullet$};
\draw[above](1, 0) node{$A$};
\draw[above](2.5,0) node{$V$};
\draw(2.5,0)--(2.5,-1);
\produit{2.5}{-1}{0.7}
\draw [fill=white] (3.2,-1.5) circle (0.4);
\draw(3.2,-1.5) node {\footnotesize{$f$}};
\coproduit{1.5}{-2}{.5}

\smallbraid{2.5}{-2.6}
\draw [fill=white] (1.5,-2.3) circle (0.35);
\draw (1.5,-2.3) node {\footnotesize{$S$}};
\draw(0.5,-2) ..controls +(0,-1) and +(0,1).. (-0.5,-3.5);
\draw(2.5,-3.5) ..controls +(0,-0.7) and +(0,0.7).. (2,-4.5);
\draw(3.2,-2.1) ..controls +(0,-0.5) and +(0,0.5).. (2.5,-2.7);
\gauche{1.5}{-3.5}
\droite{2}{-4.5}
\draw[below](1.5,-6) node{$M$};
\draw[below](5,-3) node{\footnotesize{$=$}};
%
\draw(7,0)--(11.5,0);
\draw(7,-7)--(11.5,-7);
\draw(8, 0)--(8,-1);
\draw(10.9, -0.7) node{$\bullet$};
\draw[above](8, 0) node{$A$};
\draw[above](9.5,0) node{$V$};
\draw(9.5,0)--(9.5,-1.5);
\draw(10.9,-.7)--(10.9,-1.7);
\produit{8.1}{-3.5}{0.7}
\draw [fill=white] (8.8,-4) circle (0.4);
\draw(8.8,-4) node {\footnotesize{$f$}};
\coproduit{8.5}{-1.5}{.5}
\draw(8.5,-2.5) ..controls +(0,-0.7) and +(0,0.7).. (8.1,-3.5);

\smallbraid{9.5}{-1.5}
\smallbraid{10.5}{-2.5}
\draw(10.5, -3.5)--(10.5,-4.5);
\draw [fill=white] (10.5,-4.2) circle (0.35);
\draw (10.5,-4.2) node {\footnotesize{$S$}};
\draw(7.5,-3) ..controls +(0,-1) and +(0,1).. (6.8,-4.5);
\draw(7.5,-1.5)--(7.5,-3);
\draw(10.5,-4.5) ..controls +(0,-0.7) and +(0,0.7).. (9.3,-5.5);
\draw(10.9,-1.7) ..controls +(0,-0.5) and +(0,0.5).. (10.5, -2.5);
\gauche{8.8}{-4.5}
\droite{9.3}{-5.5}
\draw[below](8.5,-7) node{$M$};
\draw[below](13,-3) node{\footnotesize{$=$}};
\draw[above](13,-3) node{\footnotesize{$(*)$}};
\draw(15,0)--(19.5,0);
\draw(15,-7.2)--(19.5,-7.2);
\draw(17.5, -3)--(17.5,-4);
\draw(18.9, -0.7) node{$\bullet$};
\draw[above](16, 0) node{$A$};
\draw[above](17.5,0) node{$V$};
\draw(17.5,0)--(17.5,-1);
\draw(18.9,-.7)--(18.9,-1.2);
\produit{17.5}{-4}{0.5}
\coproduit{16.5}{-1}{.5}
\draw(15.5,-1) ..controls +(0,-0.5) and +(0,0.5).. (15,-1.7);
\coproduit{15.5}{-2.6}{0.5}
\smallbraid{16.5}{-2.6}
\smallbraid{17.5}{-1}
\smallbraid{18.5}{-2}
\draw(18.5, -3)--(18.5,-4);
\draw [fill=white] (18.5,-3.4) circle (0.35);
\draw (18.5,-3.4) node {\footnotesize{$S$}};
\draw(14.5,-2.5) ..controls +(0,-1) and +(0,1).. (14,-4);
\draw(16.5,-3.6) ..controls +(0,-1) and +(0,1).. (17,-5);
\draw(15.5,-3.6) ..controls +(0,-0.3) and +(0,0.3).. (16,-4);
\produit{17}{-5}{0.5}
\draw(18.9,-1.2) ..controls +(0,-0.5) and +(0,0.5).. (18.5, -2);
\draw(16.5,-2)--(16.5,-2.6);
\gauche{16}{-4}
\produit{16.1}{-6}{0.7}
\draw [fill=white] (16.8,-6.5) circle (0.4);
\draw(16.8,-6.5) node {\footnotesize{$f$}};
\draw[below](17,-7.1) node{$M$};
\draw(14.5,-5.5) ..controls +(0,-0.5) and +(0,0.5).. (16.1, -6);
\draw[below](21,-3) node{\footnotesize{$=$}};
\draw(23,0)--(27,0);
\draw(23,-7.2)--(27,-7.2);
\draw[above](25, 0) node{$A$};
\draw[above](26.5,0) node{$V$};

\coproduit{25.5}{-1}{0.5}
\smallbraid{26.5}{-1}
\draw(26.5,-0)--(26.5,-1);
\smallbraid{25.5}{-2.6}
\draw(26.5,-2)--(26.5,-3.5);

\draw(24.5,-1) ..controls +(0,-0.5) and +(0,0.5).. (24,-1.7);
\coproduit{24.5}{-2.6}{0.5}
\smallbraid{17.5}{-1}
\smallbraid{18.5}{-2}
\draw [fill=white] (26.5,-3) circle (0.35);
\draw (26.5,-3) node {\footnotesize{$S$}};
\produit{25.5}{-3.5}{0.5}
\gauche{25}{-4}
\produit{24.2}{-6}{0.7}

\draw(23.5,-2.5) ..controls +(0,-1) and +(0,1).. (23,-4);
\draw(26,-4.5) ..controls +(0,-1) and +(0,1).. (25.6,-6);
\draw(24.5,-3.6) ..controls +(0,-0.3) and +(0,0.3).. (25,-4);
\draw(23.5,-5.5) ..controls +(0,-0.3) and +(0,0.3).. (24.2,-6);
\draw(25.5,-2)--(25.5,-2.6);
\draw [fill=white] (24.9,-6.5) circle (0.4);
\draw(24.9,-6.5) node {\footnotesize{$f$}};
\draw[below](25,-7.1) node{$M$};
\draw(0,-9.5)--(4,-9.5);
\draw(0,-16.8)--(4,-16.8);
\draw[above](2, -9.5) node{$A$};
\draw[above](3.5,-9.5) node{$V$};

\coproduit{2.5}{-10.5}{0.5}
\draw(3.5,-9.5)--(3.5,-11.5);
\draw(2.5,-10.5)--(2.5,-12.2);

\draw(1.5,-10.5) ..controls +(0,-0.5) and +(0,0.5).. (1,-11.2);
\coproduit{1.5}{-12.2}{0.5}
\produit{1.5}{-12.2}{0.5}

\draw [fill=white] (2.5,-11) circle (0.35);
\draw (2.5,-11) node {\footnotesize{$S$}};
\smallbraid{3}{-13}

\gauche{2}{-14}
\produit{0.5}{-15.5}{0.8}

\draw(0.5,-12.2) ..controls +(0,-1) and +(0,1).. (0,-14);
\draw(3.5,-11.5) ..controls +(0,-1) and +(0,1).. (3,-13);
\draw(3,-14) ..controls +(0,-1) and +(0,1).. (2.1,-15.5);
\draw [fill=white] (1.3,-16.2) circle (0.4);
\draw(1.3,-16.2) node {\footnotesize{$f$}};
\draw[below](1.5,-16.8) node{$M$};
\draw[below](-1,-13) node{\footnotesize{$=$}};
\draw[below](5.5,-13) node{\footnotesize{$=$}};
\draw[above](5.5,-13) node{$\eqref{cogebra}$};
\draw(8,-9.5)--(12,-9.5);
\draw(8,-17.3)--(12,-17.3);
\draw[above](9, -9.5) node{$A$};
\draw[above](11.5,-9.5) node{$V$};
\coproduit{9.5}{-10.5}{0.5}
\coproduit{10}{-11.5}{0.5}
\draw(11.5, -9.5)--(11.5,-12.5);

\draw(10,-11.5)--(10,-12.5);
\draw(9,-11.5)--(9,-12.5);
\produit{9}{-12.5}{0.5}
\draw [fill=white] (10,-12) circle (0.35);
\draw (10,-12) node {\footnotesize{$S$}};
\smallbraid{10.5}{-13.5}
\gauche{9.5}{-14.5}
\draw(8.5,-10.5) ..controls +(0,-1) and +(0,1).. (7.5,-12.5);
\draw(11.5,-12.5) ..controls +(0,-1) and +(0,1).. (10.5,-13.5);
\draw(7.5,-12.5)--(7.5,-14.5);
\produit{8}{-16}{0.8}
\draw(10.5,-14.5) ..controls +(0,-1) and +(0,1).. (9.6,-16);
\draw[below](13.5,-13) node{\footnotesize{$=$}};
\draw[above](13.5,-13) node{\footnotesize{$\eqref{S}$}};
\draw[below](9,-17.2) node{$M$};
\draw [fill=white] (8.8,-16.7) circle (0.4);
\draw(8.8,-16.7) node {\footnotesize{$f$}};
\draw(16,-9.5)--(20,-9.5);
\draw(16,-16.8)--(20,-16.8);
\draw[above](17, -9.5) node{$A$};
\draw[above](19,-9.5) node{$V$};
\draw(19,-9.5)--(19,-13);
\coproduit{17.5}{-10.5}{0.5}
\draw(17.5,-11) node{$\bullet$};
\draw(17.5,-10.5)--(17.5,-11);
\draw(17.5,-12) node{$\bullet$};
\smallbraid{19}{-13}
\gauche{18}{-14}
\produit{16.5}{-15.5}{0.8}
\draw(16,-12)--(16,-14);
\draw(17.5,-12) ..controls +(0,-0.7) and +(0,0.7).. (18,-13);
\draw(19,-14) ..controls +(0,-1) and +(0,1).. (18.1,-15.5);
\draw(16.5,-10.5) ..controls +(0,-1) and +(0,1).. (16,-12);
\draw [fill=white] (17.3,-16.2) circle (0.4);
\draw(17.3,-16.2) node {\footnotesize{$f$}};
\draw[below](17.2,-16.8) node{$M$};
\draw[below](21.5,-13) node{\footnotesize{$=$}};
\draw(24,-11)--(28,-11);
\draw[above](24.5, -11) node{$A$};
\draw[above](26.5,-11) node{$V$};
\gauche{26.5}{-11}
\produit{25.5}{-13.2}{0.8}
\draw(27.1,-12.5) node{$\bullet$};
\draw(27.1,-12.5)--(27.1,-13.2);
\draw [fill=white] (26.3,-14) circle (0.4);
\draw(26.3,-14) node {\footnotesize{$f$}};
\draw(26.3,-14.5)--(26.3,-15);
\draw(24,-15)--(28,-15);
\draw(25,-12.5) ..controls +(0,-0.4) and +(0,0.5).. (25.5,-13.2);
\draw[below](26.3,-15) node{$M$};
\draw[below](29,-13) node{\footnotesize{$.$}};
\end{tikzpicture}
\]
In the above computation, the equality $(*)$ arises from the fact that $f$ is a morphism in $_{A}\CC_{A}$. We also have
the map
\begin{align*}
\Psi_{V,M} : \text{Hom}_{_{A}\CC}(V, \widetilde{M}) &\longrightarrow \text{Hom}_{_{A}\CC_{A}}(V \boxtimes A, M)\\
g &\longmapsto \tilde{g} = \mu^{r}_{M}\circ (g \otimes \id_{A}).
\end{align*}
where we check similarly that $\tilde{g}$ is a morphism in $_{A}\CC_{A}$. It is then straightforward to check that $\Phi_{V,M}$ and $\Psi_{V,M}$ are inverse natural isomorphims, and we conclude that the functor $R$ is right adjoint to $L = - \boxtimes A$.
\epf

We obtain the following generalization of \cite[Proposition 5.6]{GK93}.

\bc \label{cor:equalpd}
Let $\CC$ be an abelian $k$-linear braided category with enough projectives and let $A$ be a Hopf algebra in $\CC$.  There exists a natural  isomorphism
\[\normalfont \text{Ext}^{*}_{{}_{A}\CC_{A}}(A, M) \simeq \text{Ext}^{*}_{{}_{A}\CC}(_\varepsilon I, \widetilde{M}).\]
and we have $\normalfont \text{pd}_{_{A}\CC_{A}}(A) = \text{pd}_{_{A}\CC}(_\varepsilon I) = \pd_{\CC_A}(I_\varepsilon).$
\ec

\bpf
The categories $_A\CC_A$ and $_A \CC$  are abelian $k$-linear by Proposition \ref{prop:modabelian},  the adjoint functors in Proposition \ref{prop:adjointmodulebimodule} are exact, and we have clearly $_\varepsilon I \boxtimes A=A$. Hence, the announced natural isomorphisms are obtained from Proposition \ref{prop:abelian}, and we get $\pd_{_{A}\CC_{A}}(A)\leq \pd_{_A\CC}(_\varepsilon I)$ as well.

For a left $A$-module $M$, consider the $A$-bimodule $M_\varepsilon$ as in the end of Subsection \ref{sub:braidedHopf}. Then the morphism $M \to \widetilde{M_{\varepsilon}}$ which is the identity in $\CC$ is an isomorphism in $_A\CC$, because
\begin{center}
\begin{tikzpicture}[scale=0.4]
\draw(-1, 0)--(3.5, 0);
\draw(1, 0)[above]node{$A$};
\draw(2.5, 0)[above]node{$M$};
\draw(2.5, 0)--(2.5, -2.5);
\draw(1, 0)--(1, -1);
\coproduit{1.5}{-1.5}{0.5}
\draw(1.5, -1.5)--(1.5, -2.5);
\draw[fill=white] (1.5,-2) circle (0.4);
\draw (1.5,-2) node {\footnotesize{$S$}};
\smallbraid{2.5}{-2.5}
\gauche{1.5}{-3.5}
\draw(0.5,-2) ..controls +(0,-1) and +(0,1).. (-0.5,-3.5);
\draw(0.5, -1.5)--(0.5, -2);
\draw(2.5, -3.5)--(2.5, -4.5);
\draw(2.5, -4.5) node{$\bullet$};
\draw(-1, -5)--(3.5, -5);
\draw(4.5, -2.5) node{$=$};
\draw(7, 0)--(10.5, 0);
\draw(8, 0)[above]node{$A$};
\draw(9.5, 0)[above]node{$M$};
\draw(9.5, 0)--(9.5, -3.5);
\draw(8, 0)--(8, -1);
\coproduit{8.5}{-1.5}{0.5}
\draw(8.5, -1.5)--(8.5, -2.5);
\draw[fill=white] (8.5,-2) circle (0.4);
\draw (8.5,-2) node {\footnotesize{$S$}};
\gauche{9.5}{-3.5}
\draw(2.5, -4.5) node{$\bullet$};
\draw(7, -5)--(10.5, -5);
\draw(7.5, -1.5)--(7.5, -3.5);
\draw(8.5, -2.5)--(8.5, -3.5);
\draw (8.5, -3.5) node{$\bullet$};
\draw(11.5, -2.5) node{$=$};
\draw(13, -1.5)--(17, -1.5);
\draw(14, -1.5)[above]node{$A$};
\draw(16, -1.5)[above]node{$M$};
\gauche{16}{-1.5}
\draw(13, -3)--(17, -3);
\draw(18, -2.5) node{$.$};
\end{tikzpicture}
\end{center}
Thus $\widetilde{M_{\varepsilon}} \simeq M$ in $_A\CC$ and we obtain
$$\text{Ext}^{*}_{{}_{A}\CC_{A}}(A, M_{\varepsilon}) \simeq \text{Ext}^{*}_{{}_{A}\CC}(_{\varepsilon}I, \widetilde{M_{\varepsilon}}) \simeq \text{Ext}^{*}_{{}_{A}\CC}(_{\varepsilon}I, M).$$
Hence $\text{pd}_{_{A}\CC}(I) \leq \text{pd}_{_{A}\CC_{A}}(A)$, and it follows that
 $\text{pd}_{_{A}\CC_{A}}(A) = \text{pd}_{_{A}\CC}(I)$. The equality $\pd_{\CC_A}(I_\varepsilon)= \text{pd}_{_{A}\CC_{A}}(A)$ is obtained by applying the left case to the reverse category $\CC^{\rm rev}$.
\epf

\bt \label{thm:equaldimen}
Let $A$ be a Hopf algebra in the braided category $\MM^{H}$ of comodules over a coquasitriangular cosemisimple Hopf algebra $H$. Then we have
\[\cd(A)= \lgd(A)=\rgd(A)=\pd_A(_\varepsilon k) = \pd_{A^{\rm op}}(k_\varepsilon).\]
\et

\bpf We have
\begin{enumerate}
 \item $\pd_{_A\mathcal M_A^H}(A) = \pd_{_A\mathcal M^H}(_\varepsilon k)$ by Corollary \ref{cor:equalpd};
\item $\pd_{_A\mathcal M^H}(_\varepsilon k) = \pd_{_A\mathcal M}(_\varepsilon k)$ by Proposition \ref{prop:pdHopfmod};
\item   $\pd_{_A\mathcal M_A}(A) \leq \pd_{_A\mathcal M_A^H}(A)$ by \cite[Corollary 11]{bichon2022monoidal} (which follows from Proposition \ref{prop:abelian}, because the (exact) forgetful functor $_A\mathcal M_A^H \to \,_A\mathcal M_A$ has an exact right adjoint).
\end{enumerate}
Hence we obtain
\[\lgd(A) \leq \cd(A) = \pd_{_A\mathcal M_A}(A) \leq \pd_{_A\mathcal M_A^H}(A)  =\pd_{_A\mathcal M^H}(_\varepsilon k) =  \pd_{_A\mathcal M}(_\varepsilon k)\leq \lgd(A)
\]
which gives the announced equality for left global dimension, and the one for right global dimension is obtained similarly.
\epf

\begin{rmq}\label{rem:ext}
 Let $A$ be a Hopf algebra in the braided category $\MM^{H}$ of comodules over a coquasitriangular  Hopf algebra $H$. Then for an object $M \in {_A\MM_A^H}$, the isomorphisms 
\[\e^{*}_{_{A}\MM_{A}^H}(A, M) \simeq \text{Ext}^{*}_{{}_{A}\MM^H}(_\varepsilon k, \widetilde{M})\]
in Corollary \ref{cor:equalpd} are valid without assuming that $\MM^H$ has enough projectives. This follows from Proposition \ref{prop:adjointmodulebimodule}, Proposition \ref{prop:pdHopfmod} (the categories ${_A \MM_A^H}$ and ${_A\MM^H}$ have enough injectives) and Proposition \ref{prop:abelian}. In particular, combining this with Proposition \ref{prop:pdHopfmod} gives, for any $A$-bimodule $M$, the following description for Hochshild cohomology:
\begin{align*}
H^{*}(A, M) &= \ext^{*}_{\,_{A}\MM_{A}}(A, M) \simeq \ext^{*}_{\,_{A}\MM_{A}^{H}}(A, M \odot H)  \\&\simeq \ext^{*}_{\,_{A}\MM^{H}}(_{\varepsilon}k, \stackon[-8pt]{$M \odot H$}{\vstretch{1.5}{\hstretch{1.8}{\widetilde{\phantom{\;\;\;\;\;\;\;\;}}}}}).  \\
\end{align*}
\end{rmq}

\section{Finiteness conditions and smoothness}\label{sec:finiteness}

In this section we use the previous constructions to obtain a convenient smoothness criterion for a Hopf algebra in the braided category of comodules over a coquasitriangular Hopf algebra.

Let us first recall the following standard finiteness condition \cite{brown} on a module over an ordinary $k$-algebra $A$: a left $A$-module $M$ is said to be of \textsl{type} ${\rm FP}_\infty$ if it admits a projective resolution by  finitely generated and projective $A$-modules, and is said to be of \textsl{type} FP if it admits a finite projective resolution by finitely generated $A$-modules, which means that there is an exact sequence of $A$-modules
\[0 \to P_n \to P_{n-1} \to \cdots P_2\to P_1 \to P_0 \to M\to 0\]
{where each $P_i$ is a finitely generated and projective $A$-module. A similar definition holds for right modules and for bimodules, and an algebra $A$ is said to be \textsl{smooth} if $A$ is of type FP as an $A$-bimodule.

To adapt the definition of a module of type FP to a more general monoidal category $\CC$, recall first that an object $V$ in $\CC$ is said to have a left dual if there exists an object $V^*$ together with morphisms $e : V^*\otimes V \to I$ and $\delta : I \to V\otimes V^*$ such that
\[ (\id_V\otimes \, e ) \circ (\delta \otimes \id_V ) = \id_V, \quad (e\otimes \id_{V^*}) \circ (\id_{V^*}\otimes \, \delta ) = \id_{V^*}\]
When $\CC = {_k\mathcal M}$, a vector space $V$ has a left dual if and only if it is finite dimensional.

\bd\label{def:fp}
Let $\CC$ be an abelian $k$-linear monoidal category and let $A$ be an algebra in $\CC$. 
\begin{enumerate}
 \item A left $A$-module $M$ is said to be relative projective if $M$ is isomorphic, as an $A$-module, to a direct summand of a free $A$-module $A \otimes V$.
 \item A left $A$-module $M$ is said to be finite relative projective if $M$ is isomorphic, as an $A$-module, to a direct summand of a free $A$-module $A \otimes V$, with $V$ an object of $\CC$ having a left dual.
\item A left $A$-module $M$ is said to be of type ${\rm FP}_\infty$ if it has a resolution by finite relative projective $A$-modules. 
\item A left $A$-module $M$ is said to be of type FP if it has a finite resolution by finite relative projectives, in the sense that there exists an exact sequence of $A$-modules
\[0 \to P_n \to P_{n-1} \to \cdots P_2\to P_1 \to P_0 \to M\to 0\]
where for each $i$, the $A$-module $P_i$ is finite relative projective. 
\end{enumerate}
\ed

Of course similar definitions hold for right $A$-modules and for $A$-bimodules.

\bp\label{prop:free2free}
Let $\CC$ be a braided category and let $A$ be an algebra in $\CC$.
The functor $L = - \boxtimes A: \, _{A}\CC \longrightarrow \,_{A}\CC_{A}$ transforms free $A$-modules into free $A$-bimodules. If moreover $\CC$ is an abelian $k$-linear braided category, then the functor $L$ transforms objects that are of type ${\rm FP}$ (resp of type ${\rm FP}_\infty$)  in $_A\CC$ into objects that are of type ${\rm FP}$ (resp. of type ${\rm FP}_\infty$) in $_{A}\CC_{A}$.
\ep

\bpf
One has to prove that for an object $V$ of $\CC$, the $A$-bimodule $(A\otimes V)\boxtimes A$ is isomorphic to the free $A$-bimodule $A\otimes V \otimes A$. Consider the linear map $f : A\otimes V \otimes A \longrightarrow (A\otimes V)\boxtimes A$ defined by
\[\begin{tikzpicture}[scale=0.45]
\draw(-0.5,0)--(4,0);
\draw(-0.5,-3)--(4,-3);
\draw[above](0.5,0) node{$A$};
\draw[above](2,0) node{$V$};
\draw (0,-1)--(0,-3);
\draw (1,-2)--(1,-3);
\draw (2,0)--(2,-1);
\draw (3,0)--(3,-2);
\draw[above](3,0) node{$A$};
\coproduit{1}{-1}{0.5}
\smallbraid{2}{-1}
\produit{2}{-2}{0.5}
\draw[below](-0.2,-3) node{$A$};
\draw[below](1,-3) node{$V$};
\draw[below](2.5,-3) node{$A$};
\end{tikzpicture}\]

We first check that $f \circ (m_{A} \otimes \id_{V \otimes A}) = \mu^{l}_{(A\otimes V)\otimes A} \circ (\id_{A} \otimes f)$, 
\[\begin{tikzpicture}[scale=0.45]
\draw(0,0)--(5,0);
\draw(0,-4.2)--(5,-4.2);
\draw[above](0.5,0) node{$A$};
\draw[above](1.8,0) node{$A$};
\draw[above](3,0) node{$V$};
\draw[above](4.3,0) node{$A$};
\produit{0.5}{0}{0.7}
\coproduit{1.9}{-2}{0.7}
\smallbraid{2.9}{-2}
\draw(2.9,0)--(2.9,-2);
\draw(4.3,0)--(4.3,-3.2);
\produit{2.9}{-3}{0.7}
\draw(0.5,-2)--(0.5,-4.2);
\draw(1.9,-3)--(1.9,-4.2);
\draw[below](0.5,-4.2) node{$A$};
\draw[below](1.8,-4.2) node{$V$};
\draw[below](3.75,-4.2) node{$A$};
\draw(6.5,-3) node{$=$};
\draw[above](6.5,-3) node{$\eqref{Hopf}$};
\draw(8,0)--(14.5,0);
\draw(8,-5)--(14.5,-5);
\draw[above](9,0) node{$A$};
\draw[above](11,0) node{$A$};
\draw[above](12.5,0) node{$V$};
\draw[above](14,0) node{$A$};
\coproduit{9.5}{-1}{0.5}
\coproduit{11.5}{-1}{0.5}
\smallbraid{10.5}{-1}
\produit{8.5}{-2}{0.5}
\produit{10.5}{-2}{0.5}
\smallbraid{12}{-3}
\produit{12}{-4}{0.5}
\draw(8.5,-1)--(8.5,-2);
\draw(11.5,-1)--(11.5,-2);
\draw(12.5,0)--(12.5,-1.5);
\draw(13.5,0)--(13.5,-2.5);
\draw(12.5,-1.5) ..controls +(0,-1) and +(0,1).. (12,-3);
\draw(13.5,-2.5) ..controls +(0,-1) and +(0,1).. (13,-4);
\draw(11,-4)--(11,-5);
\draw(9,-3)--(9,-5);
\draw[below](11,-5) node{$V$};
\draw[below](9,-5) node{$A$};
\draw[below](12.5,-5) node{$A$};
\draw(15.5,-3) node{$=$};
\draw[above](15.5,-3) node{$\eqref{associatif}$};
\draw(17,0)--(24,0);
\draw(17,-6)--(24,-6);
\draw[above](18,0) node{$A$};
\draw[above](20,0) node{$A$};
\draw[above](21.5,0) node{$V$};
\draw[above](22.5,0) node{$A$};
\coproduit{20.5}{-1}{0.5}
\smallbraid{21.5}{-1}
\produit{21.5}{-2}{0.5}
\coproduit{18.5}{-3}{0.5}
\smallbraid{19.5}{-3}
\smallbraid{20.5}{-4}
\draw(18,0)--(18,-2);
\draw(19.5,-1)--(19.5,-3);
\draw(20.5,-2)--(20.5,-4);
\produit{20.5}{-5}{0.5}
\produit{17.5}{-5}{0.5}
\draw(17.5,-3)--(17.5,-5);
\draw(18.5,-4)--(18.5,-5);
\draw(19.5,-5)--(19.5,-6);
\draw(21.5,0)--(21.5,-1);
\draw(22.5,0)--(22.5,-2);
\draw(22,-3) ..controls +(0,-1) and +(0,1).. (21.5,-5);
\draw(11,-4)--(11,-5);
\draw(25,-3) node{$.$};
\end{tikzpicture}\]
So $f$ is a morphism in $_{A}\CC$ and it is not difficult to verify that $f$ is a morphism in $\CC_{A}$. Thus $f$ is indeed a morphism in $_A\CC_{A}$. Moreover $f$ is an isomorphism with  inverse
\[\begin{tikzpicture}[scale=0.45]
\draw(-0.5,0)--(4,0);
\draw(-0.5,-4)--(4,-4);
\draw[above](0.5,0) node{$A$};
\draw[above](2,0) node{$V$};
\draw (0,-1)--(0,-4);
\draw (1,-3)--(1,-4);
\draw (2,0)--(2,-2);
\draw (3,0)--(3,-3);
\draw[above](3,0) node{$A$};
\coproduit{1}{-1}{0.5}
\draw[fill=white] (1,-1.5) circle (0.5);
\draw (1,-1.5) node {\footnotesize{$S$}};
\smallbraid{2}{-2}
\produit{2}{-3}{0.5}
\draw[below](-0.2,-4) node{$A$};
\draw[below](1,-4) node{$V$};
\draw[below](2.5,-4) node{$A$};
\draw(5,-3.5) node{$.$};
\end{tikzpicture}\]
This means the functor $L$ transforms free $A$-modules into free $A$-bimodules, and the remaining statements follow from the exactness of $L$.
\epf

We get the announced smoothness criterion.

\bt\label{thm:smoothness}
Let $A$ be a Hopf algebra in the braided category $\MM^{H}$ of comodules over a coquasitriangular Hopf algebra $H$. If $_\varepsilon k$ is of type FP in $_A\mathcal M^H$, then $A$ is a smooth algebra.
\et

\bpf
We have $L(_\varepsilon k) \simeq A$, hence $A$ is of type FP in  $_A\mathcal M^H_A$ if $_\varepsilon k$ is of type FP in $_A\mathcal M^H$, by  Proposition \ref{prop:free2free}. It follows that $A$ is of type FP in $_A\mathcal M_A$, since finite relative projective objects in $_A\mathcal M^H_A$ clearly are finitely generated projective $A$-bimodules.
\epf 

\begin{rmq}
 Let $H$ be a Hopf algebra and let $A$ be a right $H$-comodule algebra. If $A$ is (left) Noetherian, then every object in $_A\mathcal M^H$ that is finitely generated as an $A$-module is of type ${\rm FP}_\infty$ in $_A\mathcal M^H$.
\end{rmq}

\begin{proof}
 This is similar to the usual argument with ordinary modules: let $M$ be an object in $_A\mathcal M^H$ and let $V\subset M$ be a finite-dimensional subspace that generates $M$ as an $A$-module. Then there is a finite-dimensional $H$-subcomodule $W$ of $M$ that contains $V$, and a surjective $A$-linear and $H$-colinear map $A\otimes W \to M$. The kernel is an object of $_A\mathcal M^H$ and is finitely generated by Noetherianity of $A$, so we can repeat the process to get the desired resolution of $M$.
\end{proof}

We now use the ${\rm FP}_\infty$ condition to construct a comodule structure on certain $\ext$ spaces. This will be used in the next section and relies on the following observation.

\bl\label{Lem:Cohom}
Let $H$ be a  Hopf algebra with bijective antipode and let $A$ be an $H$-comodule algebra.
 Let $P$ be a finite relative projective object in $\,_{A}\MM^{H}$. Then there is a map
\begin{align*}
\delta : \Hom_{A}(P, A) &\longrightarrow \Hom_{A}(P, A) \otimes H\\
f &\longmapsto f_{(0)} \otimes f_{(1)}
\end{align*}
such that for all $x \in P$, 
\begin{align}
 f_{(0)}(x) \otimes f_{(1)} = f(x_{(0)})_{(0)} \otimes S_{H}^{-1}(x_{(1)})f(x_{(0)})_{(1)}\label{CoAc:Hom}
\end{align}
that endows $\Hom_{A}(P, A)$ with an $H$-comodule structure, and makes it into an object in $\mathcal M_A^H$ ($\Hom_{A}(P, A)$ being endowed with its natural right $A$-module structure).
\el 

\bpf
Start with the the map 
\begin{align*}
\delta_0 : \Hom_{A}(P, A) &\longrightarrow \Hom_{A}(P, {_AA}\otimes H)\\
f &\longmapsto \delta_0(f), \ \delta_0(f)(x)=  f(x_{(0)})_{(0)} \otimes S_{H}^{-1}(x_{(1)})f(x_{(0)})_{(1)}
\end{align*}
Let us check that $\delta_0(f)$ is indeed $A$-linear. 
For $a \in A$ and $x\in P$, we have
\begin{align*}
 \delta_0(f)(a.x) &=  f((a.x)_{(0)})_{(0)} \otimes S_{H}^{-1}((a.x)_{(1)})f((a.x)_{(0)})_{(1)} \\
&= f(a_{(0)}.x_{(0)})_{(0)} \otimes S_{H}^{-1}(a_{(1)}x_{(1)})f(a_{(0)}.x_{(0)})_{(1)} \\
&= \big(a_{(0)}f(x_{(0)})\big)_{(0)} \otimes S_{H}^{-1}(a_{(1)}x_{(1)})\big(a_{(0)}f(x_{(0)})\big)_{(1)} \\
 &= a_{(0)}f(x_{(0)})_{(0)} \otimes S_{H}^{-1}(a_{(2)}x_{(1)})a_{(1)}f(x_{(0)})_{(1)} \\
 &= a_{(0)}f(x_{(0)})_{(0)} \otimes S_H^{-1}(x_{(1)})S^{-1}_{H}(a_{(2)})a_{(1)}f(x_{(0)})_{(1)} \\
& =  af(x_{(0)})_{(0)} \otimes S_H^{-1}(x_{(1)})f(x_{(0)})_{(1)}.
\end{align*}
Since $P$ is finite relative projective in  $\,_{A}\MM^{H}$, it is in particular finitely generated projective as an $A$-module and the map 
\begin{align*}
 \Hom_{A}(P, A) \otimes H &\longrightarrow \Hom_{A}(P, {_AA}\otimes H) \\
\sum_i \varphi_i\otimes h_i &\longmapsto (x \mapsto \sum_i \varphi_i(x)\otimes h_i) 
\end{align*}
is an isomorphim. The map $\delta$ is then obtained from the composition of the inverse of this map and $\delta_0$.
Now we check that $\delta$ is coassociative.
Applying $(\id \otimes \Delta_{H})$ to  \eqref{CoAc:Hom}, we have, for all $x \in P$,
\begin{align*}
f_{(0)}(x) \otimes \Delta_{H}(f_{(1)})&= f(x_{(0)})_{(0)} \otimes S_{H}^{-1}(x_{(1)})_{(1)}f(x_{(0)})_{(1)} \otimes S_{H}^{-1}(x_{(1)})_{(2)}f(x_{(0)})_{(2)}\\
&=f(x_{(0)})_{(0)} \otimes S_{H}^{-1}(x_{(2)})f(x_{(0)})_{(1)} \otimes S_{H}^{-1}(x_{(1)})f(x_{(0)})_{(2)}.
\end{align*}
On the other hand we have
\begin{align*}
 f_{(0)(0)}(x)\otimes f_{(0)(1)} \otimes f_{(1)} & = f_{(0)}(x_{(0)})_{(0)} \otimes S_{H}^{-1}(x_{(1)})f_{(0)}(x_{(0)})_{(1)} \otimes f_{(1)}\\
&= f(x_{(0)})_{(0)} \otimes S_{H}^{-1}(x_{(2)})f(x_{(0)})_{(1)} \otimes S_H^{-1}(x_{(1)})f(x_{(0)})_{(2)}
\end{align*}
and this proves the coassociativity. The counit property is an easy verification, and we have indeed defined the anounced comodule structure on $\Hom_{A}(P, A)$.

For $a \in A$, we have
\begin{align*}
(f\cdot a)_{(0)}(x) \otimes (f\cdot a)_{(1)} & = (f\cdot a)(x_{(0)})_{(0)} \otimes S_{H}^{-1}(x_{(1)})(f\cdot a)(x_{(0)})_{(1)}\\
& = (f(x_{(0)})a)_{(0)} \otimes S_{H}^{-1}(x_{(1)})(f(x_{(0)})a)_{(1)}\\
& = f(x_{(0)})_{(0)}a_{(0)} \otimes S_{H}^{-1}(x_{(1)})f(x_{(0)})_{(1)}a_{(1)}\\
&= f_{(0)}\cdot a_{(0)}(x)\otimes f_{(1)}a_{(1)} 
\end{align*}
and this shows that $\Hom_{A}(P, A)$ is indeed an object in $\mathcal M_A^H$.
\epf 

\bl \label{Lem:CoExt}
Let $H$ be a  Hopf algebra with bijective antipode, and let $A$ be an $H$-comodule algebra.
 Let $M$ be an $A$-module of type ${\rm FP}_\infty$  in $\,_{A}\MM^{H}$. For $n \in \N$, the comodule structure of Lemma \ref{Lem:Cohom} induces a map
\begin{align*}
\bar{\delta} : \ext^{n}_{A}(M, A) &\longrightarrow \ext^{n}_{A}(M, A) \otimes H\\
[f] &\longmapsto [f]_{(0)} \otimes [f]_{(1)} = [f_{(0)}] \otimes f_{(1)}
\end{align*}
making $\ext^{n}_{A}(M, A)$ into an $H$-comodule, and an object in $\mathcal M_A^H$.
\el

\bpf
Let $(P_{*}, d_{*}) \longrightarrow M$ be a projective resolution of $M$ in $\,_{A}\MM^H$ such that for each $n$, $P_{n}$ is finite relative projective. Each  $\Hom_{A}(P_{n},A)$ inherits the $H$-comodule structure of Lemma \ref{Lem:Cohom}, and 
to prove our assertion, it is enough to check that the following diagram commutes:
\[
\begin{tikzcd}
\Hom_{A}(P_{n},A)\arrow[rr, "-\circ d_{n+1}"] \arrow[d, "\delta"]&& 
\Hom_{A}(P_{n+1},A) \arrow[d, "\delta"] \\
\Hom_{A}(P_{n},A)\otimes H \arrow[rr, "(-\circ d_{n+1})\otimes {\rm id}_H"]&&  \Hom_{A}(P_{n+1},A)\otimes H
\end{tikzcd}\]
For $f \in \Hom_{A}(P_{n}, A)$ and $x \in P_{n+1}$ we have, using the colinearity of $d_{n+1}$,
\begin{align*}
f_{(0)}\circ d_{n+1}(x) \otimes f_{(1)} &= f(d_{n+1}(x)_{(0)})_{(0)} \otimes S_{H}^{-1}(d_{n+1}(x)_{(1)}) f(d_{n+1}(x)_{(0)})_{(1)}\\
&= f(d_{n+1}(x_{(0)}))_{(0)} \otimes S_{H}^{-1}(x_{(1)}) f(d_{n+1}(x_{(0)}))_{(1)}\\
& =(f\circ d_{n+1})_{(0)}(x) \otimes (f\circ d_{n+1})_{(1)}
\end{align*}
and this concludes the proof of the lemma.
\epf

We conclude this section by a last lemma, that we will use in Section \ref{sec:twop}.

\begin{lemma}\label{lem:gp-trivial}
 Let $H$ be a  Hopf algebra with bijective antipode, and let $A$ be an $H$-comodule algebra.
 Let $M$ be a $A$-module of type ${\rm FP}$  in $\,_{A}\MM^{H}$ having a  resolution 
\[0 \to P_n \to P_{n-1} \to \cdots P_2\to P_1 \to P_0 \to M\to 0\]
with each $A$-module $P_i$ finite relative projective and $P_n=A$. Assume moreover that  $\ext^{n}_{A}(M, A)$ is one dimensional. Then the group-like corresponding to the $H$-comodule  structure on  $\ext^{n}_{A}(M, A)$ in Lemma \ref{Lem:CoExt} is trivial.
\end{lemma}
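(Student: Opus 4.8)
The plan is to realize $\ext^n_A(M,A)$ explicitly as a quotient of $A$ inside $\mathcal M_A^H$ and then read off the associated group-like directly. First I would apply $\Hom_A(-,A)$ to the given resolution $0 \to P_n \to \cdots \to P_0 \to M \to 0$, obtaining a cochain complex $\Hom_A(P_\bullet, A)$ which, by Lemma \ref{Lem:CoExt}, is a complex in $\mathcal M_A^H$ (the differentials $-\circ d_i$ are $H$-colinear by the argument of that lemma). Since the resolution has length $n$, there is no term in degree $n+1$, so the top cohomology is simply the cokernel
\[\ext^n_A(M,A) = \operatorname{coker}\big(\Hom_A(P_{n-1},A) \xrightarrow{-\circ d_n} \Hom_A(P_n,A)\big),\]
computed in $\mathcal M_A^H$. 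Using the hypothesis $P_n = A$, I would identify $\Hom_A(A,A) \cong A$ via $f \mapsto f(1)$, which is an isomorphism of right $A$-modules.

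The key step is to check that under this identification the comodule structure of Lemma \ref{Lem:Cohom} on $\Hom_A(A,A)$ is exactly the original coaction of $A$. Evaluating the defining formula \eqref{CoAc:Hom} at $x = 1$ and using that the coaction of the comodule algebra $A$ is unital, i.e. $1_{(0)} \otimes 1_{(1)} = 1_A \otimes 1_H$, gives $f_{(0)}(1) \otimes f_{(1)} = f(1)_{(0)} \otimes f(1)_{(1)}$; consistency at a general $x$ follows from left $A$-linearity $f(x_{(0)}) = x_{(0)} f(1)$ together with comodule coassociativity and the antipode identity $S_H^{-1}(h_{(2)}) h_{(1)} = \varepsilon_H(h) 1_H$, which collapse the extra $H$-factors via the counit. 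Hence $\Hom_A(A,A) \cong A$ as objects of $\mathcal M_A^H$, carrying the tautological right $A$-action and $H$-coaction.

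Consequently $\ext^n_A(M,A) \cong A/I$ in $\mathcal M_A^H$, where $I = \im(-\circ d_n)$ is a right ideal of $A$ that is simultaneously an $H$-subcomodule, and the quotient coaction is $\bar a \mapsto \overline{a_{(0)}} \otimes a_{(1)}$. By hypothesis this quotient is one-dimensional, so $I$ has codimension one; since $I$ is a right ideal, $1 \notin I$ (otherwise $I = A$ and the quotient would vanish), and therefore $\bar 1$ is a basis of $\ext^n_A(M,A)$. Finally the coaction on this basis vector is $\overline{1_{(0)}} \otimes 1_{(1)} = \bar 1 \otimes 1_H$, so the group-like attached to this one-dimensional comodule is $1_H$, i.e. trivial.

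I expect the main obstacle to be the second paragraph: verifying that the somewhat elaborate formula of Lemma \ref{Lem:Cohom} really specializes to the identity coaction on $A$. The bookkeeping with $S_H^{-1}$ and comodule coassociativity must be carried out carefully, but once this identification is secured the remaining steps are immediate.
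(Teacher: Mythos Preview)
Your proof is correct and follows essentially the same route as the paper: identify $\Hom_A(A,A)\cong A$ in $\mathcal M_A^H$ via $f\mapsto f(1)$, realize $\ext^n_A(M,A)$ as the quotient of $A$ by the image of $-\circ d_n$, and observe that the class of $1_A$ spans this one-dimensional quotient with trivial coaction. Your second paragraph is more detailed than strictly necessary---once Lemma \ref{Lem:Cohom} guarantees the coaction is well-defined, the evaluation at $x=1$ alone suffices to identify it---but the extra verification does no harm.
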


\sloppy
\bpf
It is a straightforward verification that under the identification $A\simeq {\rm Hom}_A(A, A)$, the $H$-coaction of Lemma \ref{Lem:Cohom} on ${\rm Hom}_A(A, A)$ is given by $a \mapsto a_{(0)} \otimes a_{(1)}$. The right $A$-module $\ext^{n}_{A}(M, A)$ is the cokernel of the right $A$-linear map ${\rm Hom}_{A}(P_{n-1},A) \to  {\rm Hom}_A(A, A) \simeq A$, which is surjective if and only if $1_A$ belongs to its image. The assumption $\dim_k(\ext^{n}_{A}(M, A))=1$ then ensures that the class of $1_A$ generates the vector space $\ext^{n}_{A}(M, A)$ and our first observation in the proof thus ensures that the $H$-coaction is trivial. 
\epf

\section{Homological duality} \label{sec:homodual}

In this section we provide a convenient criterion that ensures that a braided Hopf algebra in a comodule category is twisted Calabi-Yau.

\subsection{Twisted Calabi-Yau algebras} We begin the section by recalling the concept of twisted Calabi-Yau algebra.
Recall that if $R$ is an algebra and $M, N$ are left $R$-modules with $N$ is a right $S$-module for another algebra $S$ such that $N$ is a $R$-$S$-bimodule, then the space of right $R$-linear maps $\Hom_{R}(M, N)$ carries a natural right $S$-module structure defined by \[(f \cdot s)(x) = f(x)\cdot s.\]
This induces   a natural right $S$-module structure on $\ext^{*}_{R}(M, N)$. In particular the Hochschild cohomology spaces 
$H^*(A, {_AA\otimes A_A})= \ext_{A^e}(A,A^e)$ are naturally right $A^e$-modules, hence $A$-bimodules. The following condition appears in  \cite{BZ08} under the name \textsl{rigid Gorenstein}, see for example \cite{LWW14} for the present terminology.

\bd
An algebra $A$ is said to be twisted Calabi-Yau of dimension $n\geq 0$ if A is smooth and 
 \[H^{i}(A, \,_{A}A \otimes A_{A}) \simeq 
\begin{cases}
	\{0\} \quad \text{if} \quad i \neq n\\
	A_{\mu} \quad \text{if} \quad i = n
\end{cases}
\]
as $A$-bimodules, for an algebra automorphism $\mu \in {\rm Aut}(A)$, called the Nakayama automorphism of $A$. 
\ed

The interest in the twisted Calabi-Yau condition comes from the fact that it induces a duality between the Hochschild homologies and cohomologies \cite{VDB}: if $A$ is a twisted Calabi-Yau algebra of dimension $n$ with Nakayama automorphism $\mu$, then necessarily $n=\cd(A)$, and if $M$ is an $A$-bimodule, then we have for any $i \geq 0$
\[H^i(A,M) \simeq H_{n-i}(A, {\,_{\mu^{-1}}  M})\]

We will prove the following result, which generalizes the usual result \cite[Corollary 5.2]{BZ08} for ordinary Hopf algebras.

\bt \label{thm:tcy} Let $A$ be a Hopf algebra with bijective antipode in the braided category $\MM^{H}$ of comodules over a coquasitriangular Hopf algebra $H$. 
Assume that the $A$-module $_\varepsilon k$ is of type FP  in $_A\mathcal M^H$ and that there is an integer $n\geq 0$ such that $\ext_A^i(_\varepsilon k, A)=\{0\}$ for $i\not =n$ and $\ext_A^n(_\varepsilon k, A)$ is one-dimensional. Then $A$ is twisted Calabi-Yau of dimension $n$, with Nakayama automorphism defined by 
 \[\mu(a) = \psi(a_{[1]})\, \normalfont \textbf{r}\big(a_{[2](1)}, S_{H}(a_{[2](2)})g^{-1}\big)  S_{A}^{2}(a_{[2](0)})\] 
where $\psi : A\to k$ is the algebra map corresponding to the $A$-module structure on $\ext_A^n(_\varepsilon k, A)$ and  satisfies $\psi(a_{(0)}) a_{(1)} = \psi(a)1$ for any $a\in A$,  
and $g \in H$ is the group-like element corresponding to the $H$-comodule structure on $\ext_A^n(_\varepsilon k, A)$ from Lemma \ref{Lem:CoExt}.
\et

The rest of the section is devoted to the proof of Theorem \ref{thm:tcy}.

\subsection{The structure of \texorpdfstring {$H^*(A, \,_AA\otimes A_A)$}{H(A, AA x AA)}} Theorem \ref{thm:tcy} will be a consequence of the following result.

\bt\label{Thm:Hoch}
Let $A$ be a Hopf algebra in the braided categry $\MM^H$ of comodules over a coquasitriangular  Hopf algebra $H$. If $\,_{\varepsilon} k$ is of type $\text{FP}_\infty$ in $\,_A\MM^H$, then there is an isomorphism of right $A^{e}$-modules
\[H^{*}(A, \,_{A}A \otimes A_{A}) \simeq \ext^{*}_{A}(_{\varepsilon}k, \,_{A}A) \otimes A\]
where the right $A^{e}$-action on $\ext^{*}_{A}(_{\varepsilon}k, \,_{A}A) \otimes A$ is defined by
\[([f] \otimes a') \cdot (a \otimes b) = ([f ]\cdot a_{[1]})_{(0)} \otimes ba'S^{2}_{A}(a_{[2](0)}) \normalfont\textbf{r}\big[a_{[2](1)}, S_{H}\big(a_{[2](2)}\big)S_{H}\big(([f] \cdot a_{[1]})_{(1)} \big)\big] \]
with the right $A$-structure on $\ext^{*}_{A}(_{\varepsilon}k, \,_{A}A)$  induced by right multiplication in $A$ and the right $H$-comodule structure being the one of Lemma \ref{Lem:CoExt}.
\et

Taking Theorem \ref{Thm:Hoch} for granted, the proof of Theorem \ref{thm:tcy} follows easily:

\begin{proof}[Proof of Theorem \ref{thm:tcy}]
Let $A$ be a Hopf algebra in the braided categry $\MM^H$ of comodules over a coquasitriangular  Hopf algebra $H$.   If $\,_{\varepsilon} k$ is of type FP in $\,_A\MM^H$, we know from Theorem \ref{thm:smoothness} that $A$ is smooth. Assuming moreover that $\ext_A^i(_\varepsilon k, A)=\{0\}$ for $i\neq n$, we obtain from Theorem \ref{Thm:Hoch} that  $H^{i}(A, \,_{A}A \otimes A_{A}) = \{0\}$ for $i\neq n$. Assume finally that   $\ext_A^n(_\varepsilon k, A)$ is one dimensional and let $\psi : A\to k$ be the algebra map corresponding to the $A$-module structure on $\ext_A^n(_\varepsilon k, A)$ 
and $g \in H$ be the group-like element corresponding to the $H$-comodule structure on $\ext_A^n(_\varepsilon k, A)$ given by Lemma \ref{Lem:CoExt}. It is easily seen from the fact that $\ext_A^n(_\varepsilon k, A)$ is an object in $\mathcal M^H_A$ (Lemma \ref{Lem:CoExt}) that $\psi$ satisfies $\psi(a_{(0)}) a_{(1)} = \psi(a)1$ for any $a\in A$ (while there is no such condition on $g$). Then the right $A^e$-module on the right term of Theorem \ref{Thm:Hoch} is
\begin{align*}([f] \otimes a') \cdot (a \otimes b) &= ([f ]\cdot a_{[1]})_{(0)} \otimes ba'S^{2}_{A}(a_{[2](0)}) \textbf{r}\big[a_{[2](1)}, S_{H}\big(a_{[2](2)}\big)S_{H}\big(([f] \cdot a_{[1]})_{(1)} \big)\big] \\
&  = \psi(a_{[1]}) [f]_{(0)} \otimes ba'S^{2}_{A}(a_{[2](0)}) \textbf{r}\big[a_{[2](1)}, S_{H}\big(a_{[2](2)}\big)S_{H}\big([f]_{(1)} \big)\big] \\
&  =  [f] \otimes b a'\psi(a_{[1]})S^{2}_{A}(a_{[2](0)}) \textbf{r}\big[a_{[2](1)}, S_{H}\big(a_{[2](2)}\big)g^{-1}\big]
\end{align*}
and therefore Theorem \ref{thm:tcy} follows from Theorem \ref{Thm:Hoch}.
\end{proof}

\subsection{Proof of Theorem \ref{Thm:Hoch}}

We now fix a coquasitriangular Hopf algebra $H$ and a Hopf algebra $A$ in the braided category $\mathcal M^H$, 
for which we use the following Sweedler notation for the respective comultiplications and coaction of  $H$ on $A$: 
\[\Delta_A(a) = a_{[1]} \otimes a_{[2]}, \quad \Delta_H(x) = x_{(1)}\otimes x_{(2)}, \quad  \alpha(a) = a_{(0)} \otimes a_{(1)}.\]
The $H$-colinearity of $\Delta_A$ reads, for $a \in A$,
\begin{align}
a_{[1](0)} \otimes a_{[2](0)} \otimes a_{[1](1)}a_{[2](0)} &= a_{(0)[1]} \otimes a_{(0)[2]} \otimes a_{(1)} \label{morcoproduct}
\end{align}
and, if we apply $\id_{A} \otimes \, \alpha \otimes \id_{H}$ to both sides of this equality, we obtain:
\begin{align}
a_{[1](0)} \otimes a_{[2](0)} \otimes a_{[2](1)} \otimes a_{[1](1)}a_{[2](2)} &= a_{(0)[1]} \otimes a_{(0)[2](0)} \otimes a_{(0)[2](1)} \otimes a_{(1)} \label{morcoproduct1}
\end{align}
Since $S_{A}$ is also $H$-colinear, we have as well
\begin{align}
S_{A}(a)_{(0)} \otimes S_{A}(a)_{(1)} &= S_{A}(a_{(0)}) \otimes a_{(1)} \label{morantipode}.
\end{align}

The following result, which provides an isomorphism between two natural objects in $_A\mathcal M^H$, generalizes a known result for ordinary Hopf algebras \cite[Lemma 2.2]{BZ08}.

\bp \label{Thm:Iso}
Let $A$ be a Hopf algebra in the braided category $\MM^{H}$ of comodules over a coquasitriangular Hopf algebra $H$. We have an isomorphism in $\,_{A}\MM^{H}$: 
\[F : (\,_{A}A \otimes A) \boxdot H \longrightarrow \stackon[-8pt]{$(\,_{A}A \otimes A_{A}) \odot H$}{\vstretch{1.5}{\hstretch{3.1}{\widetilde{\phantom{\;\;\;\;\;\;\;\;}}}}} \]
defined by
\begin{align*}
F(a \otimes b \otimes h) &= a_{(0)}\cdot \big( 1 \otimes b \otimes S_{H}(a_{(1)})h\big)\\
&= a_{[1](0)} \otimes bS_{A}(a_{[2](0)}) \otimes h_{(2)} \, \normalfont\textbf{r}\left(a_{[2](1)}, S_{H}(a_{[1](1)}a_{[2](2)})h_{(1)}\right),
\end{align*}
with inverse 
\[G : \stackon[-8pt]{$\,_{A}A \otimes A_{A} \odot H$}{\vstretch{1.5}{\hstretch{3.1}{\widetilde{\phantom{\;\;\;\;\;\;\;\;}}}}} \longrightarrow \,_{A}A \otimes A \boxdot H\]
given by
\[G(a \otimes b \otimes h) =  a_{[1](0)} \otimes bS^{2}_{A}(a_{[2](0)}) \otimes h_{(2)} \, \normalfont \textbf{r}\left(a_{[2](1)}, S_{H}(a_{[1](1)}a_{[2](2)})h_{(1)}\right).\]
\ep

\bpf
Using the left $A$-module structure from Proposition \ref{prop:bimtoleft}, and  $\eqref{r1}$ and $\eqref{morcoproduct1}$, we have  
\begin{align*}
&F(a \otimes b \otimes h) = a_{(0)}\cdot \big( 1 \otimes b \otimes S_{H}(a_{(1)})h\big)\\
&= a_{(0)[1](0)} \otimes b S_{A}(a_{(0)[2](0)}) \otimes a_{(0)[1](1)}\left(S_{H}(a_{(1)})h\right)_{(1)}a_{(0)[2](1)} \textbf{r} \left[a_{(0)[2](2)}, \left(S_{H}(a_{(1)})h\right)_{(2)}\right]\\
&=  a_{(0)[1](0)} \otimes b S_{A}(a_{(0)[2](0)}) \otimes a_{(0)[1](1)}a_{(0)[2](2)}\left(S_{H}(a_{(1)})h\right)_{(2)} \textbf{r} \left[a_{(0)[2](1)}, \left(S_{H}(a_{(1)})h\right)_{(1)}\right]\\
&=  a_{(0)[1]} \otimes b S_{A}(a_{(0)[2](0)}) \otimes a_{(1)}S_{H}(a_{(2)})h_{(2)} \textbf{r} \left[a_{(0)[2](1)}, S_{H}(a_{(3)})h_{(1)}\right]\\
&= a_{(0)[1]} \otimes b S_{A}(a_{(0)[2](0)}) \otimes h_{(2)} \textbf{r} \left[a_{(0)[2](1)}, S_{H}(a_{(1)})h_{(1)}\right]\\
&= a_{[1](0)} \otimes b S_{A}(a_{[2](0)}) \otimes h_{(2)} \textbf{r} \left[a_{[2](1)}, S_{H}(a_{[1](1)}a_{[2](2)})h_{(1)}\right].
\end{align*}
We obtain the expression of $F$ as stated.
For $x \in A$, we have
\begin{align*}
F\left(x \cdot (a \otimes b \otimes h)\right) & = F \left(x_{(0)}a \otimes b \otimes x_{(1)}h\right)\\
&= (x_{(0)}a)_{(0)} \cdot \left(1 \otimes b \otimes S_{H}\big((x_{(0)}a)_{(1)}))x_{(1)}h\right)\\
&= (x_{(0)}a_{(0)}) \cdot \left(1 \otimes b \otimes S_{H}(x_{(1)}a_{(1)})x_{(2)}\right)\\
&= x \cdot \left(a_{(0)} \cdot \big(1 \otimes b \otimes S_{H}(a_{(1)})\big)\right)\\
&= x \cdot F(a \otimes b \otimes h).
\end{align*}
Hence $F$ is $A$-linear, and it is immediate to check that $F$ is also $H$-colinear.
It remains to prove that $F$ is indeed an isomorphism with inverse $G$. We have 
\begin{align*}
&F \circ G (a \otimes b \otimes h) = F\left(a_{[1](0)} \otimes bS^{2}_{A}(a_{[2](0)}) \otimes h_{(2)}\right) \, \textbf{r}\left(a_{[2](1)}, S_{H}(a_{[1](1)}a_{[2](2)})h_{(1)}\right)\\
&= a_{[1](0)[1](0)} \otimes bS^{2}_{A}(a_{[2](0)})S_{A}(a_{[1](0)[2](0)}) \otimes h_{(3)}\\ &\hspace*{1cm} \textbf{r}\left(a_{[1](0)[2](1)}, S_{H}(a_{[1](0)[1](1)}a_{[1](0)[2](2)})h_{(2)}\right)
\textbf{r}\left(a_{[2](1)}, S_{H}(a_{[1](1)}a_{[2](2)})h_{(1)}\right)\\ 
&= a_{[1](0)[1]} \otimes bS^{2}_{A}(a_{[2](0)})S_{A}(a_{[1](0)[2](0)}) \otimes h_{(3)} \, \textbf{r}\left(a_{[1](0)[2](1)}, S_{H}(a_{[1](1)})h_{(2)}\right)\\
&\hspace*{1cm}\textbf{r}\left(a_{[2](1)}, S_{H}(a_{[1](2)}a_{[2](2)})h_{(1)}\right) \hspace*{3cm}(\text{by using $\eqref{morcoproduct1}$ for $a_{[1](0)}$})\\
&= a_{[1](0)} \otimes bS^{2}_{A}(a_{[3](0)})S_{A}(a_{[2](0)}) \otimes h_{(3)} \, \textbf{r}\left(a_{[2](1)}, S_{H}(a_{[1](1)}a_{[2](2)})h_{(2)}\right)\\
&\hspace*{1cm} \textbf{r}\left(a_{[3](1)}, S_{H}(a_{[1](2)}a_{[2](3)}a_{[3](2)})h_{(1)}\right) \hspace*{1.5cm}(\text{applying again $\eqref{morcoproduct1}$ to $a_{[1]}$}) \\
&=a_{[1](0)} \otimes bS_{A}\left(a_{[2](0)} S_{A}(a_{[3](0)})\right)\otimes h_{(3)} \, \textbf{r}^{-1}\left(a_{[2](1)}, a_{[3](1)}\right) \, \textbf{r}\left(a_{[2](2)},h_{(2)}\right)\\
&\hspace*{1cm}\textbf{r}\left(a_{[2](3)},S_{H}(a_{[1](1)}a_{[2](4)})\right)\, \textbf{r}\left(a_{[3](2)},h_{(1)}\right) \textbf{r}\left(a_{[3](3)}, S_{H}(a_{[1](2)}a_{[2](5)}a_{[3](4)})\right)
\end{align*}
where we have used the fact that $S_A$ is an algebra anti-morphism in $\MM^H$ for the last equality. Using successively the properties of $\textbf{r}$, we get
\begin{align*}
&F \circ G (a \otimes b \otimes h) \\
&= a_{[1](0)} \otimes bS_{A}\left(a_{[2](0)} S_{A}(a_{[3](0)})\right)\otimes h_{(2)} \, \textbf{r}^{-1}\left(a_{[2](1)}, a_{[3](1)}\right) \, \textbf{r}\left(a_{[3](2)}a_{[2](2)},h_{(1)}\right)\\
&\hspace*{1cm}\textbf{r}\left(a_{[2](3)},S_{H}(a_{[1](1)}a_{[2](4)})\right)\, \textbf{r}\left(a_{[3](3)}, S_{H}(a_{[1](2)}a_{[2](5)}a_{[3](4)})\right)\\
&=a_{[1](0)} \otimes bS_{A}\left(a_{[2](0)} S_{A}(a_{[3](0)})\right)\otimes h_{(2)} \textbf{r}^{-1}\left(a_{[2](2)}, a_{[3](2)}\right)\textbf{r}\left(a_{[2](1)}a_{[3](1)},h_{(1)}\right)\\
&\hspace*{1cm}\textbf{r}\left(a_{[2](3)},S_{H}(a_{[1](1)}a_{[2](4)})\right)\, \textbf{r}\left(a_{[3](3)}, S_{H}(a_{[1](2)}a_{[2](5)})\right) \textbf{r}\left(a_{[3](4)}, S_{H}(a_{[3](5)})\right)\\
&= a_{[1](0)} \otimes bS_{A}\left(a_{[2](0)} S_{A}(a_{[3](0)})\right)\otimes h_{(2)} \, \textbf{r}^{-1}\left(a_{[2](2)}, a_{[3](2)}\right) \, \textbf{r}\left(a_{[2](1)}a_{[3](1)},h_{(1)}\right)\\
&\hspace*{1cm}\textbf{r}\left(a_{[3](3)}a_{[2](3)}, S_{H}(a_{[1](1)}a_{[2](4)})\right) \textbf{r}\left(a_{[3](4)}, S_{H}(a_{[3](5)})\right)\\
&= a_{[1](0)} \otimes bS_{A}\left(a_{[2](0)} S_{A}(a_{[3](0)})\right)\otimes h_{(2)} \, \textbf{r}^{-1}\left(a_{[2](3)}, a_{[3](3)}\right) \, \textbf{r}\left(a_{[2](1)}a_{[3](1)},h_{(1)}\right)\\
&\hspace*{1cm}\textbf{r}\left(a_{[2](2)}a_{[3](2)}, S_{H}(a_{[1](1)}a_{[2](4)})\right) \textbf{r}\left(a_{[3](4)}, S_{H}(a_{[3](5)})\right).
\end{align*}
Now let $f : H\to k$ be the linear map defined by $f(x) = \textbf{r}\big(x_{(1)}, S_{H}(x_{(2)})\big)$ and recall from \cite[Proposition 2.v, p.334]{KSBook97} that, for $h, k\in H$,
$\textbf{r}^{-1}(h, k) = \textbf{r}(S_{H}(h), k)$ and  $\textbf{r}(h, k) = \textbf{r}(S_{H}(h), S_{H}(k))$. We have the partial expression
\begin{align*}
\textbf{r}^{-1}\left(a_{[2](3)}, a_{[3](3)}\right) &\textbf{r}\left(a_{[3](4)}, S_{H}(a_{[3](5)})\right) = \textbf{r}\left(S_{H}(a_{[2](3)}), a_{[3](3)}\right) f\big(a_{[3](4)}\big)\\
&=\textbf{r}\left(S_{H}(a_{[2](3)}), a_{[3](3)} f\big(a_{[3](4)}\big)\right)\\
&= \textbf{r}\left(S_{H}(a_{[2](3)}), S^{2}_{H}(a_{[3](4)} )f\big(a_{[3](3)}\big)\right) \, (\text{by \cite[Proposition 3, p.334]{KSBook97}})\\
&= \textbf{r}\left(a_{[2](3)}, S_{H}(a_{[3](4)})\right) f\big(a_{[3](3)}\big)\\
&= \textbf{r}\left(a_{[2](3)}, S_{H}(a_{[3](5)})\right) \textbf{r}\big(a_{[3](3)}, S_{H}(a_{[3](4)})\big)\\
&= \textbf{r}\left(a_{[2](3)}a_{[3](3)} , S_{H}(a_{[3](4)})\right).
\end{align*}
We finally obtain
\begin{align*}
&F \circ G (a \otimes b \otimes h) = a_{[1](0)} \otimes bS_{A}\left(a_{[2](0)} S_{A}(a_{[3](0)})\right)\otimes h_{(2)} \, \textbf{r}\left(a_{[2](1)}a_{[3](1)},h_{(1)}\right) \\
&\hspace*{1cm}\textbf{r}\left(a_{[2](2)}a_{[3](2)}, S_{H}(a_{[1](1)}a_{[2](4)})\right) \textbf{r}\left(a_{[2](3)}a_{[3](3)} , S_{H}(a_{[3](4)})\right)\\
&= a_{[1](0)} \otimes bS_{A}\left(a_{[2](0)} S_{A}(a_{[3](0)})\right)\otimes h_{(2)} \, \textbf{r}\left(a_{[2](1)}a_{[3](1)},S_{H}(a_{[1](1)}a_{[2](2)}a_{[3](2)})h_{(1)}\right) \\
&= a_{[1](0)} \otimes bS_{A}\left[\left(a_{[2]} S_{A}(a_{[3]})\right)_{(0)}\right]\otimes h_{(2)}\\
& \hspace*{1cm}  \textbf{r}\left[\left(a_{[2]}S_{A}(a_{[3]})\right)_{(1)}, S_{H}\left((a_{[2]}S_{A}(a_{[3]})_{(2)}\right) S_{H}(a_{[1](1)})h_{(1)}\right] \hspace*{1cm} (\text{by \eqref{morantipode}})\\
&= a_{[1](0)} \otimes bS_{A}\left(\varepsilon_{A}(a_{[2]})\right) \otimes h_{(2)} \, \textbf{r}\left[1, S_{H}(a_{[1](1)})h_{(1)}\right]\\
&= a \otimes b \otimes h.
\end{align*}
Similarly, we also have 
\begin{align*}
&G \circ F (a \otimes b \otimes h) = G\left(a_{[1](0)} \otimes bS_{A}(a_{[2](0)}) \otimes h_{(2)}\right) \, \textbf{r}\left(a_{[2](1)}, S_{H}(a_{[1](1)}a_{[2](2)})h_{(1)}\right)\\
&= a_{[1](0)[1](0)} \otimes bS_{A}(a_{[2](0)})S^{2}_{A}(a_{[1](0)[2](0)}) \otimes h_{(3)} \, \textbf{r}\left(a_{[2](1)}, S_{H}(a_{[1](1)}a_{[2](2)})h_{(1)}\right)\\
&\hspace*{1cm} \textbf{r}\left(a_{[1](0)[2](1)}, S_{H}(a_{[1](0)[1](1)}a_{[1](0)[2](2)})h_{(2)}\right)\\
&= a_{[1](0)} \otimes bS_{A}(a_{[3](0)})S^{2}_{A}(a_{[2](0)}) \otimes h_{(3)} \, \textbf{r}\left(a_{[2](1)}, S_{H}(a_{[1](1)}a_{[2](2)})h_{(2)}\right)\\
&\hspace*{1cm} \textbf{r}\left(a_{[3](1)}, S_{H}(a_{[1](2)}a_{[2](3)}a_{[3](2)})h_{(1)}\right) \hspace*{2cm}(\text{by using $\eqref{morcoproduct1}$ successively})\\
&= a_{[1](0)} \otimes bS_{A}\left[S_{A}(a_{[2](0)})a_{[3](0)}\right] \otimes h_{(3)} \, \textbf{r}^{-1}(a_{[2](1)}, a_{[3](1)})\, \textbf{r}\left(a_{[2](2)}, S_{H}(a_{[1](1)}a_{[2](3)})h_{(2)}\right)\\
&\hspace*{1cm} \textbf{r}\left(a_{[3](2)}, S_{H}(a_{[1](2)}a_{[2](4)}a_{[3](3)})h_{(1)}\right) \hspace*{2cm}(\text{ $S_{A}$ is an antimorphism in $\MM^{H}$})\\
&= a \otimes b \otimes h
\end{align*}
by applying the same reasoning as in the calculations of $F \circ G$, which completes the proof. 
\epf

The proof of Theorem \ref{Thm:Hoch} will consist in transporting the right $A^e$-module structure on $H^{*}(A, \,_{A}A \otimes A_{A})$ to $\ext^{*}_{A}(_{\varepsilon}k, \,_{A}A) \otimes A$ using several successive isomorphisms:
\begin{align*}
H^{*}(A, \,_{A}A \otimes A_{A}) &= \ext^{*}_{\,_{A}\MM_{A}}(A, \,_{A}A \otimes A_{A})&\\ &\simeq \ext^{*}_{\,_{A}\MM_{A}^{H}}(A, \,_{A}A \otimes A_{A} \odot H)  &\text{(by Proposition \ref{prop:pdHopfmod})}\\&\simeq \ext^{*}_{\,_{A}\MM^{H}}(_{\varepsilon}k, \stackon[-8pt]{$\,_{A}A \otimes A_{A} \odot H$}{\vstretch{1.5}{\hstretch{3.1}{\widetilde{\phantom{\;\;\;\;\;\;\;\;}}}}})  &\text{(by Remark \ref{rem:ext})}\\
&\simeq \ext^{*}_{\,_{A}\MM^{H}}(_{\varepsilon}k, \,_{A}A \otimes A \boxdot H) \quad &\text{(by Theorem \ref{Thm:Iso})}\\
&\simeq \ext^{*}_{A}(_{\varepsilon}k, \,_{A}A \otimes A)  &\text{(by Proposition \ref{prop:pdHopfmod})}\\
&\simeq \ext^{*}_{A}(_{\varepsilon}k, \,_{A}A) \otimes A &(\text{$_{\varepsilon}k$ is of type FP}). 
\end{align*}

We now proceed with the transportation of the various $A^e$-module structures, step by step. We fix an object  $P$ be in $_A\mathcal M^H$.

\bl \label{Lem: HomAMAH}
There is a right $A^e$-module structure on $\Hom_{\,_A\MM^{H}_A}(P \boxtimes A, \,_AA \otimes A_A \odot H)$ defined by
\begin{align}
f \cdot (a' \otimes b') (x \otimes a) = \sum_{i}a^{i}a' \otimes b'b^{i} \otimes h^{i}
\end{align}
where $f(x \otimes a) = \sum_{i}a^{i} \otimes b^{i} \otimes h^{i}$, and such that the natural isomorphism
\[\Hom_{\,_A\MM^{H}_A}(P \boxtimes A, \,_AA \otimes A_A \odot H) \simeq \Hom_{\,_A\MM_A}(P \boxtimes A, \,_AA \otimes A_A)\]
is $A^e$-linear.
\el
\bpf
The isomorphism is 
\begin{align*}
\Hom_{\,_A\MM^{H}_A}(P \boxtimes A, \,_AA \otimes A_A \odot H) &\longrightarrow \Hom_{\,_A\MM_A}(P \boxtimes A, \,_AA \otimes A_A)\\
f \,&\longmapsto \quad \tilde{f} = (\id \otimes \varepsilon_H) \circ f
\end{align*}
and the verification is immediate.
\epf

\bl\label{Lem: HomAMH1}
There is a right $A^e$-module structure on $\Hom_{\,_A\MM^{H}}(P, \stackon[-8pt]{$\,_{A}A \otimes A_{A} \odot H$}{\vstretch{1.5}{\hstretch{3.1}{\widetilde{\phantom{\;\;\;\;\;\;\;\;}}}}})$, defined by 
\begin{align*}
f \cdot (a' \otimes b')(x) = \sum_i a^{i}a' \otimes b'b^{i} \otimes h^{i} 
\end{align*}
where $f (x) = \sum_i a^{i} \otimes b^{i} \otimes h^{i}$, and such that the natural isomorphism
\[\Hom_{\,_A\MM^{H}}(P, \stackon[-8pt]{$\,_{A}A \otimes A_{A} \odot H$}{\vstretch{1.5}{\hstretch{3.1}{\widetilde{\phantom{\;\;\;\;\;\;\;\;}}}}}) \simeq \Hom_{\,_A\MM_A^H}(P \boxtimes A, \,_AA \otimes A_A \odot H)\]
is $A^e$-linear.
\el

\bpf
The isomorphism (from Proposition \ref{prop:adjointmodulebimodule})  is given by
\begin{align*}
\Hom_{\,_A\MM^{H}}(P, \stackon[-8pt]{$\,_{A}A \otimes A_{A} \odot H$}{\vstretch{1.5}{\hstretch{3.1}{\widetilde{\phantom{\;\;\;\;\;\;\;\;}}}}}) &\longrightarrow \Hom_{\,_A\MM_A^H}(P \boxtimes A, \,_AA \otimes A_A \odot H)\\
f \quad &\longmapsto \quad \tilde{f},  \ x \otimes a \mapsto f(x).a
\end{align*}
and again the verification is immediate.
\epf

\bl\label{Lem:HomAMH2} There is a right $A^e$-module structure on
$\Hom_{\,_A\MM^{H}}(P, \,_AA \otimes A \boxdot H)$ defined by
\begin{align*}
f \cdot (a' \otimes b')(x) = \sum_{i}a^{i}_{(0)}a'_{(0)[1]} \otimes b'b^{i}S^{2}_{A}\big(a'_{(0)[2](0)}\big) \otimes h^{i}_{(2)}\, \normalfont \textbf{r}\left[a'_{(0)[2](1)}, S_{H}\big(a^{i}_{(1)}a'_{(1)}\big)h^{i}_{(1)}\right]
\end{align*}
where $f(x) = \sum_i a^{i} \otimes b^{i} \otimes h^{i}$, and such that the isomorphism induced by the isomorphism of Proposition \ref{Thm:Iso}
\[\Hom_{\,_A\MM^{H}}(P, \,_AA \otimes A \boxdot H) \simeq \Hom_{\,_A\MM^H}(P, \, \stackon[-8pt]{$\,_{A}A \otimes A_{A} \odot H$}{\vstretch{1.5}{\hstretch{3.1}{\widetilde{\phantom{\;\;\;\;\;\;\;\;}}}}})\]
is $A^e$-linear.
\el
\bpf
The above isomorphism coming from Proposition  \ref{Thm:Iso} is
\begin{align*}
\Hom_{\,_A\MM^{H}}(P, \,_AA \otimes A \boxdot H) &\longrightarrow \Hom_{\,_A\MM^H}(P, \, \stackon[-8pt]{$\,_{A}A \otimes A_{A} \odot H$}{\vstretch{1.5}{\hstretch{3.1}{\widetilde{\phantom{\;\;\;\;\;\;\;\;}}}}})\\
f \hspace*{.5cm}&\longmapsto \quad F \circ f\\
G \circ g \hspace*{.5cm}&\longmapsfrom \quad g
\end{align*}
The transported $A^e$-module structure on $\Hom_{\,_A\MM^{H}}(P, \,_AA \otimes A \boxdot H)$ is defined by 
\[f \cdot (a' \otimes b')(x) = G\big((F\circ f) \cdot (a' \otimes b') (x)\big)\]
For $f(x) = \sum_i a^{i} \otimes b^{i} \otimes h^{i}$, we have
\[F(f(x)) = \sum_{i}a^{i}_{[1](0)} \otimes b^iS_{A}(a^{i}_{[2](0)}) \otimes h^{i}_{(2)} \, \normalfont\textbf{r}\left(a^{i}_{[2](1)}, S_{H}(a^{i}_{[1](1)}a^{i}_{[2](2)})h^{i}_{(1)}\right) \]
and hence
\[(F\circ f) \cdot (a' \otimes b')(x) = \sum_{i}a^{i}_{[1](0)}a' \otimes b'b^iS_{A}(a^{i}_{[2](0)}) \otimes h^{i}_{(2)} \, \normalfont\textbf{r}\left(a^{i}_{[2](1)}, S_{H}(a^{i}_{[1](1)}a^{i}_{[2](2)})h^{i}_{(1)}\right)\]
We thus have
\begin{align*}
&f \cdot (a' \otimes b')(x) =  G\big((F\circ f) \cdot (a' \otimes b')(x)\big)\\
&= \sum_{i}(a^{i}_{[1](0)}a')_{[1](0)} \otimes b'b^{i}S_{A}(a^{i}_{[2](0)})S^{2}_{A}\big[(a^{i}_{[1](0)}a')_{[2](0)}\big] \otimes h^{i}_{(3)}\\
&\textbf{r}\left[a^{i}_{[2](1)}, S_{H}(a^{i}_{[1](1)}a^{i}_{[2](2)})h^{i}_{(1)}\right]\textbf{r}\left[(a^{i}_{[1](0)}a')_{[2](1)}, S_{H}((a^{i}_{[1](0)}a')_{[1](1)}(a^{i}_{[1](0)}a')_{[2](2)})h^{i}_{(2)}\right]
\end{align*}
Since $\Delta_{A}$ is an algebra morphism in $\MM^{H}$, we have the partial expression
\[(a^{i}_{[1](0)}a')_{[1]} \otimes (a^{i}_{[1](0)}a')_{[2]} = a^{i}_{[1](0){[1]}}a'_{[1](0)} \otimes a^{i}_{[1](0)[2](0)}a'_{[2]}\, \textbf{r}\left[a^{i}_{[1](0)[2](1)}, a'_{[1](1)}\right]\]
and hence
\begin{align*}
&f \cdot (a' \otimes b')(x) = \sum_{i}(a^{i}_{[1](0){[1]}}a'_{[1](0)})_{(0)} \otimes b'b^{i}S_{A}(a^{i}_{[2](0)})S^{2}_{A}\big[(a^{i}_{[1](0)[2](0)}a'_{[2]})_{(0)}\big] \otimes h^{i}_{(3)}\\ &\hspace*{1cm} \textbf{r}\left[a^{i}_{[1](0)[2](1)}, a'_{[1](1)}\right]\textbf{r}\left[a^{i}_{[2](1)}, S_{H}\big(a^{i}_{[1](1)}a^{i}_{[2](2)}\big)h^{i}_{(1)}\right]\\
&\hspace*{1cm} \textbf{r}\left[(a^{i}_{[1](0)[2](0)}a'_{[2]})_{(1)}, S_{H}\big((a^{i}_{[1](0){[1]}}a'_{[1](0)})_{(1)}(a^{i}_{[1](0)[2](0)}a'_{[2]})_{(2)}\big)h^{i}_{(2)}\right]\\
&= \sum_{i}a^{i}_{[1](0)[1](0)}a'_{[1](0)} \otimes b'b^{i}S_{A}(a^{i}_{[2](0)})S^{2}_{A}\big(a^{i}_{[1](0)[2](0)}a'_{[2](0)}\big) \otimes h^{i}_{(3)}\\ &\hspace*{1cm} \textbf{r}\left[a^{i}_{[1](0)[2](3)}, a'_{[1](2)}\right]\textbf{r}\left[a^{i}_{[2](1)}, S_{H}(a^{i}_{[1](1)}a^{i}_{[2](2)})h^{i}_{(1)}\right]\\
&\hspace*{1cm} \textbf{r}\left[a^{i}_{[1](0)[2](1)}a'_{[2](1)}, S_{H}\big(a^{i}_{[1](0)[1](1)}a'_{[1](1)}a^{i}_{[1](0)[2](2)}a'_{[2](2)}\big)h^{i}_{(2)}\right].
\end{align*}
Using now the properties of $\textbf{r}$, we get
\begin{align*}
f \cdot (a' \otimes b')(x) &= \sum_{i}a^{i}_{[1](0)[1](0)}a'_{[1](0)} \otimes b'b^{i}S_{A}(a^{i}_{[2](0)})S^{2}_{A}\big(a^{i}_{[1](0)[2](0)}a'_{[2](0)}\big )\otimes h^{i}_{(3)}\\ &\hspace*{1cm} \textbf{r}\left[a^{i}_{[1](0)[2](2)}, a'_{[1](1)}\right]\textbf{r}\left[a^{i}_{[2](1)}, S_{H}(a^{i}_{[1](1)}a^{i}_{[2](2)})h^{i}_{(1)}\right]\\
&\hspace*{1cm} \textbf{r}\left[a^{i}_{[1](0)[2](1)}a'_{[2](1)}, S_{H}\big(a^{i}_{[1](0)[1](1)}a^{i}_{[1](0)[2](3)}a'_{[1](2)}a'_{[2](2)}\big)h^{i}_{(2)}\right].
\end{align*}
Applying the $H$-colinearity of $\Delta_A$ $\eqref{morcoproduct1}$ successively for $a_{[1](0)}$ and  $a_{[1]}$ then gives
\begin{align*}
f \cdot (a' \otimes b')(x)&= \sum_{i}a^{i}_{[1](0)}a'_{[1](0)} \otimes b'b^{i}S_{A}(a^{i}_{[3](0)})S^{2}_{A}\big(a^{i}_{[2](0)}a'_{[2](0)}\big) \otimes h^{i}_{(3)}\\ &\hspace*{1cm} \textbf{r}\left[a^{i}_{[2](2)}, a'_{[1](1)}\right]\textbf{r}\left[a^{i}_{[3](1)}, S_{H}\big(a^{i}_{[1](2)}a^{i}_{[2](4)}a^{i}_{[3](2)}\big)h^{i}_{(1)}\right]\\
&\hspace*{1cm} \textbf{r}\left[a^{i}_{[2](1)}a'_{[2](1)}, S_{H}\big(a^{i}_{[1](1)}a^{i}_{[2](3)}a'_{[1](2)}a'_{[2](2)}\big)h^{i}_{(2)}\right].
\end{align*}
Using the fact that $S_A$ is an anti-morphism and then successive applications of the properties of $\textbf{r}$, we get
\begin{align*}
f \cdot &(a' \otimes b')(x)= \sum_{i}a^{i}_{[1](0)}a'_{[1](0)} \otimes b'b^{i}S_{A}\left[S_{A}\big(a^{i}_{[2](0)}a'_{[2](0)}\big)a^{i}_{[3](0)}\right] \otimes h^{i}_{(3)} \\ &\textbf{r}^{-1}\left[a^{i}_{[2](1)}a'_{[2](1)}, a^{i}_{[3](1)}\right] \textbf{r}\left[a^{i}_{[2](3)}, a'_{[1](1)}\right]\textbf{r}\left[a^{i}_{[3](2)}, S_{H}\big(a^{i}_{[1](2)}a^{i}_{[2](5)}a^{i}_{[3](3)}\big)h^{i}_{(1)}\right]\\
&\textbf{r}\left[a^{i}_{[2](2)}a'_{[2](2)}, S_{H}\big(a^{i}_{[1](1)}a^{i}_{[2](4)}a'_{[1](2)}a'_{[2](3)}\big)h^{i}_{(2)}\right]\\
&= \sum_{i}a^{i}_{[1](0)}a'_{[1](0)} \otimes b'b^{i}S_{A}\left[S_{A}\big(a^{i}_{[2](0)}a'_{[2](0)}\big)a^{i}_{[3](0)}\right] \otimes h^{i}_{(3)} \, \textbf{r}^{-1}\left[a^{i}_{[2](1)}a'_{[2](1)}, a^{i}_{[3](1)}\right]\\ &\hspace*{1cm} \textbf{r}\left[a^{i}_{[2](4)}, a'_{[1](1)}\right]\textbf{r}\left[a^{i}_{[3](2)}, h^{i}_{(1)}\right]\textbf{r}\left[a^{i}_{[3](3)}, S_{H}\big(a^{i}_{[1](2)}a^{i}_{[2](6)}a^{i}_{[3](4)}\big)\right]\\
&\hspace*{1cm} \textbf{r}\left[a^{i}_{[2](2)}a'_{[2](2)}, h^{i}_{(2)}\right]\textbf{r}\left[a^{i}_{[2](3)}a'_{[2](3)}, S_{H}\big(a^{i}_{[1](1)}a^{i}_{[2](5)}a'_{[1](2)}a'_{[2](4)}\big)\right]\\
&= \sum_{i}a^{i}_{[1](0)}a'_{[1](0)} \otimes b'b^{i}S_{A}\left[S_{A}\big(a^{i}_{[2](0)}a'_{[2](0)}\big)a^{i}_{[3](0)}\right] \otimes h^{i}_{(2)} \, \textbf{r}^{-1}\left[a^{i}_{[2](3)}a'_{[2](3)}, a^{i}_{[3](3)}\right]\\ 
&\hspace*{1cm} \textbf{r}\left[a^{i}_{[2](5)}, a'_{[1](1)}\right]\textbf{r}\left[a^{i}_{[2](2)}a'_{[2](2)}a^{i}_{[3](2)}, S_{H}\big(a^{i}_{[1](1)}a^{i}_{[2](6)}\big)\right] \textbf{r}\left[ a^{i}_{[3](4)}, S_{H}(a^{i}_{[3](5)})\right]\\
&\hspace*{1cm}\textbf{r} \big[ a^{i}_{[2](4)}a'_{[2](4)}, S_{H}\big(a'_{[1](2)}a'_{[2](5)}\big)\big] \textbf{r}\left[a^{i}_{[2](1)}a'_{[2](1)}a^{i}_{[3](1)}, h^{i}_{(1)}\right].\hspace*{1cm}  
\end{align*}
Considering again the linear form $f : H\to k$ defined by $f(x)= \textbf{r}(x_{(1)},S(x_{(2)})$, we obtain, 
by \cite[Proposition 2.v and Proposition 3, p.334]{KSBook97}, the partial expression
\begin{align*}
\textbf{r}^{-1}\left[a^{i}_{[2](3)}a'_{[2](3)}, a^{i}_{[3](3)}\right]&\textbf{r}\left[a^{i}_{[3](4)}, S_{H}(a^{i}_{[3](5)})\right] = \textbf{r}\left[S_{H}\big(a^{i}_{[2](3)}a'_{[2](3)}\big), a^{i}_{[3](3)}f(a^{i}_{[3](4)})\right]\nonumber\\
&= \textbf{r}\left[S_{H}\big(a^{i}_{[2](3)}a'_{[2](3)}\big), S^{2}_{H}(a^{i}_{[3](4)})f(a^{i}_{[3](3)})\right]\nonumber\\
&= \textbf{r}\left[a^{i}_{[2](3)}a'_{[2](3)}, S_{H}(a^{i}_{[3](4)})f(a^{i}_{[3](3)})\right] \nonumber\\
&= \textbf{r}\left[a^{i}_{[2](3)}a'_{[2](3)}, S_{H}(a^{i}_{[3](5)})\right]\textbf{r}\left[a^{i}_{[3](3)}, S_{H}(a^{i}_{[3](4)})\right] \nonumber\\
&= \textbf{r}\left[a^{i}_{[2](3)}a'_{[2](3)}a^{i}_{[3](3)}, S_{H}(a^{i}_{[3](4)})\right].
\end{align*}
Hence we have
\begin{align*}
&f \cdot (a' \otimes b')(x) = \sum_{i}a^{i}_{[1](0)}a'_{[1](0)} \otimes b'b^{i}S_{A}\left[S_{A}\big(a^{i}_{[2](0)}a'_{[2](0)}\big)a^{i}_{[3](0)}\right] \otimes h^{i}_{(2)}\\ 
&\hspace*{1cm} \textbf{r}\left[a^{i}_{[2](2)}a'_{[2](2)}a^{i}_{[3](2)}, S_{H}\big(a^{i}_{[1](1)}a^{i}_{[2](6)}\big)\right]\textbf{r}\left[a^{i}_{[2](3)}a'_{[2](3)}a^{i}_{[3](3)}, S_{H}(a^{i}_{[3](4)})\right]\\
&\hspace*{1cm}\textbf{r} \big[ a^{i}_{[2](4)}a'_{[2](4)}, S_{H}\big(a'_{[1](2)}a'_{[2](5)}\big)\big] \textbf{r}\left[a^{i}_{[2](1)}a'_{[2](1)}a^{i}_{[3](1)}, h^{i}_{(1)}\right]\textbf{r}\left[a^{i}_{[2](5)}, a'_{[1](1)}\right]\\
&= \sum_{i}a^{i}_{[1](0)}a'_{[1](0)} \otimes b'b^{i}S_{A}\left[S_{A}\big(a^{i}_{[2](0)}a'_{[2](0)}\big)a^{i}_{[3](0)}\right] \otimes h^{i}_{(2)}\\ 
&\hspace*{1cm} \textbf{r}\left[a^{i}_{[2](1)}a'_{[2](1)}a^{i}_{[3](1)}, S_{H}\big(a^{i}_{[1](1)}a^{i}_{[2](4)}a^{i}_{[3](2)}\big)h^{i}_{(1)}\right]\\
&\hspace*{1cm}\textbf{r} \big[ a^{i}_{[2](2)}a'_{[2](2)}, S_{H}\big(a'_{[1](2)}a'_{[2](3)}\big)\big] \textbf{r}\left[a^{i}_{[2](3)}, a'_{[1](1)}\right]
\end{align*}
\begin{align*}
&= \sum_{i}a^{i}_{[1](0)}a'_{[1](0)} \otimes b'b^{i}S_{A}\left[S_{A}\big(a'_{[2](0)}\big)S_{A}\big(a^{i}_{[2](0)}\big)a^{i}_{[3](0)}\right] \otimes h^{i}_{(2)} \textbf{r}\big[a^{i}_{[2](1)}, a'_{[2](1)}\big]\\ 
&\hspace*{1cm} \textbf{r}\left[a^{i}_{[2](2)}a'_{[2](2)}a^{i}_{[3](1)}, S_{H}\big(a^{i}_{[1](1)}a^{i}_{[2](5)}a^{i}_{[3](2)}\big)h^{i}_{(1)}\right]\\
&\hspace*{1cm}\textbf{r} \big[ a^{i}_{[2](3)}a'_{[2](3)}, S_{H}\big(a'_{[1](2)}a'_{[2](4)}\big)\big] \textbf{r}\left[a^{i}_{[2](4)}, a'_{[1](1)}\right].
\end{align*}
Using once again the properties of $\textbf{r}$, we then have
\begin{align*}
f \cdot &(a' \otimes b')(x) = \sum_{i}a^{i}_{[1](0)}a'_{[1](0)} \otimes b'b^{i}S_{A}\left[S_{A}\big(a'_{[2](0)}\big)S_{A}\big(a^{i}_{[2](0)}\big)a^{i}_{[3](0)}\right] \otimes h^{i}_{(2)}\\ 
&\textbf{r}\big[a^{i}_{[2](2)}, a'_{[2](2)}\big] \textbf{r}\left[a'_{[2](1)}a^{i}_{[2](1)}a^{i}_{[3](1)}, S_{H}\big(a^{i}_{[1](1)}a^{i}_{[2](5)}a^{i}_{[3](2)}\big)h^{i}_{(1)}\right] \, (\text{permute $a^{i}$ and $a'$})\\
&\textbf{r} \big[ a^{i}_{[2](3)}a'_{[2](3)}, S_{H}\big(a'_{[1](2)}a'_{[2](4)}\big)\big] \textbf{r}\left[a^{i}_{[2](4)}, a'_{[1](1)}\right].
\end{align*}
The partial expression
\begin{align*}
&\textbf{r}\big[a^{i}_{[2](2)}, a'_{[2](2)}\big]\textbf{r} \big[ a^{i}_{[2](3)}a'_{[2](3)}, S_{H}\big(a'_{[1](2)}a'_{[2](4)}\big)\big] \textbf{r}\left[a^{i}_{[2](4)}, a'_{[1](1)}\right]\\
&= \textbf{r}\big[a^{i}_{[2](3)}, a'_{[2](3)}\big]\textbf{r} \big[a'_{[2](2)}a^{i}_{[2](2)}, S_{H}\big(a'_{[1](2)}a'_{[2](4)}\big)\big] \textbf{r}\left[a^{i}_{[2](4)}, a'_{[1](1)}\right]\\
&= \textbf{r}\big[a^{i}_{[2](3)}, a'_{[1](1)}a'_{[2](3)}\big]\textbf{r} \big[a^{i}_{[2](2)}, S_{H}\big(a'_{[1](2)}a'_{[2](4)}\big)\big]\textbf{r} \big[a'_{[2](2)}, S_{H}\big(a'_{[1](3)}a'_{[2](5)}\big)\big]\\
&= \varepsilon_{H}\big(a^{i}_{[2](2)}\big) \textbf{r} \big[a'_{[2](2)}, S_{H}\big(a'_{[1](1)}a'_{[2](3)}\big)\big]
\end{align*}
enables us to obtain, using again the colinearity of $S_A$ and \eqref{morcoproduct1},
\begin{align*}
&f \cdot (a' \otimes b')(x) = \sum_{i}a^{i}_{[1](0)}a'_{[1](0)} \otimes b'b^{i}S_{A}\left[S_{A}\big(a'_{[2](0)}\big)\big(S_{A}(a^{i}_{[2]})a^{i}_{[3]}\big)_{(0)}\right] \otimes h^{i}_{(2)}\\ &\hspace*{0.5cm}\textbf{r}\left[a'_{[2](1)}\big(S_{A}(a^{i}_{[2]})a^{i}_{[3]}\big)_{(1)}, \big(S_{A}(a^{i}_{[2]})a^{i}_{[3]}\big)_{(2)}S_{H}\big(a^{i}_{[1](1)}\big)h^{i}_{(1)}\right]\, \textbf{r}\left[a'_{[2](2)}, S_{H}(a'_{[1](1)}a'_{[2](3)})\right]\\
&\hspace*{1cm}(\text{Since $S_A$ is $H$-colinear})\\
&=  \sum_{i}a^{i}_{[1](0)}a'_{[1](0)} \otimes b'b^{i}S^{2}_{A}\big(a'_{[2](0)}\big) \varepsilon_A(a^{i}_{[2]}) \otimes h^{i}_{(2)}\,\textbf{r}\left[a'_{[2](1)}, S_{H}\big(a^{i}_{[1](1)}a'_{[1](1)}a'_{[2](2)}\big)h^{i}_{(1)}\right]\\
&= \sum_{i}a^{i}_{(0)}a'_{(0)[1]} \otimes b'b^{i}S^{2}_{A}\big(a'_{(0)[2](0)}\big) \otimes h^{i}_{(2)}\,\textbf{r}\left[a'_{(0)[2](1)}, S_{H}\big(a^{i}_{(1)}a'_{(1)}\big)h^{i}_{(1)}\right]\quad 
\end{align*}
and this completes the proof.
\epf

\bl \label{Lem:AM} Assume that $P$ is finitely generated and projective as an $A$-module. Then there is a right $A^e$-module structure on $\Hom_{\,_A\MM}(P, A) \otimes A$ such that 
\begin{align*}
(f \otimes a') \cdot (a \otimes b) = (f \cdot a_{[1]})_{(0)} \otimes ba'S^{2}_{A}(a_{[2](0)}) \, \normalfont\textbf{r}\big[a_{[2](1)}, S_{H}\big(a_{[2](2)}\big)S_{H}\big((f \cdot a_{[1]})_{(1)} \big)\big] 
\end{align*}
where the right $A$-action and $H$-coaction on $\Hom_{\,_A\MM}(P, A)$ are induced respectively by right multiplication in $A$ and by Lemma \ref{Lem:Cohom}. This $A^e$-module structure is such that
the composition of the canonical isomorphisms
\[
 \Hom_{\,_A\MM}(P, A) \otimes A  \to \Hom_{\,_A\MM}(P, \,_AA \otimes A) \to \Hom_{\,_A\MM^{H}}(P, \,_AA \otimes A \boxdot H)
\]
is $A^e$-linear, where the right term has the $A^e$-module structure given in Lemma \ref{Lem:HomAMH2}.
\el

\bpf The map on the left above is an isomorphism because $P$ is finitely generated projective, the map on the right is the adjunction isomorphism, and the composition, that we denote $T$, is then defined by
\[T(f\otimes a)(x) = f(x_{(0)}) \otimes a \otimes x_{(1)}.\]
It is enough to check that $T((f \otimes a') \cdot (a \otimes b)) = T(f \otimes a') \cdot (a \otimes b)$, with $(f \otimes a') \cdot (a \otimes b)$ defined as above. We have
\begin{align*}
&T\left((f \otimes a') \cdot (a \otimes b)\right)(x)\\
&= T \left[(f \cdot a_{[1]})_{(0)} \otimes ba'S^{2}_{A}(a_{[2](0)}) \right] (x) \, \textbf{r}\big[a_{[2](1)}, S_{H}\big(a_{[2](2)}\big)S_{H}\big((f \cdot a_{[1]})_{(1)} \big)\big]                                                                                                                                                                                                                                                                                                                                                                                                                                                                                                                                                                                                                                                                                                                                                                                                                                                                                                                                                                                                                                                                                                                                                                                                                                                                                                                                                                                                                                                                                                                                                                                                                                                                                                                                                                              \\
&= (f \cdot a_{[1]})_{(0)} (x_{(0)}) \otimes ba'S^{2}_{A}(a_{[2](0)})\otimes x_{(1)} \, \textbf{r}\big[a_{[2](1)}, S_{H}\big(a_{[2](2)}\big)S_{H}\big((f \cdot a_{[1]})_{(1)} \big)\big]\\
&= (f \cdot a_{[1]})(x_{(0)})_{(0)} \otimes ba'S^{2}_{A}(a_{[2](0)})\otimes x_{(2)}\\
&\quad \textbf{r}\big[a_{[2](1)}, S_{H}\big(a_{[2](2)}\big)S_{H}\big(S^{-1}_{H}(x_{(1)})(f \cdot a_{[1]})(x_{(0)})_{(1)}\big)\big] \quad (\text{here, we use Lemma \ref{Lem:Cohom}})\\
&= \big(f(x_{(0)})a_{[1]}\big)_{(0)} \otimes ba'S^{2}_{A}(a_{[2](0)}) \otimes x_{(2)}\textbf{r}\big[a_{[2](1)}, S_{H}\big(a_{[2](2)}\big)S_{H}\big((f(x_{(0)}) a_{[1]})_{(1)}\big)x_{(1)}\big]\\
&= f(x_{(0)})_{(0)}a_{[1](0)}\otimes ba'S^{2}_{A}(a_{[2](0)}) \otimes x_{(2)} \textbf{r}\big[a_{[2](1)}, S_{H}\big(a_{[1](1)}a_{[2](2)}\big)S_{H}\big(f(x_{(0)})_{(1)} \big)x_{(1)}\big]. 
\end{align*}
On the other hand we have
\begin{align*}
&T(f \otimes a') \cdot (a \otimes b) (x) \\
&=  f(x_{(0)})_{(0)}a_{(0)[1]} \otimes ba'S_{A}^{2}(a_{(0)[2](0)}) \otimes x_{(2)} \, \textbf{r}
\big[a_{(0)[2](1)}, S_{H}(f(x_{(0)})_{(1)}a_{(1)})x_{(1)}\big]\\
&= f(x_{(0)})_{(0)}a_{[1](0)} \otimes ba'S_{A}^{2}(a_{[2](0)}) \otimes x_{(2)} \textbf{r}
\big[a_{[2](1)}, S_{H}(f(x_{(0)})_{(1)}a_{[1](1)}a_{[2](2)})x_{(1)}\big] \, \text{(by $\eqref{morcoproduct1}$)}
\end{align*}
and the two expressions coincide, which proves our lemma. 
\epf

We can now prove Theorem \ref{Thm:Hoch}.
Let $P_{*} \longrightarrow \,_\varepsilon k$ be a resolution of $_\varepsilon k$ by finite relative projectives  in $\,_{A}\MM^H$. Then the $P_i$ are finitely generated projective $A$-modules,
and $P_* \boxtimes A \longrightarrow A$ is also a resolution of $A$ in $\,_{A}\MM^H_A$ by relative projective objects (by Proposition \ref{prop:free2free}) and hence by finitely generated projective $A$-bimodules. It follows that $\ext^{*}_{\,_{A}\MM_{A}}(A, \,_{A}A \otimes A_{A})$ is the homology of the complex $\Hom_{\,_A\MM_A}(P_{*} \boxtimes A, \,_AA \otimes A_A)$ (with the natural right $A^e$-module structure), while $\ext^{*}_{A}(_{\varepsilon}k, \,_{A}A) \otimes A$ is the homology of the complex 
${\rm Hom}_{_A\mathcal M}(P_*, A)\otimes A$, with the $A^e$-module structure obtained by applications of Lemma \ref{Lem: HomAMAH}, \ref{Lem: HomAMH1}, \ref{Lem:HomAMH2} and \ref{Lem:AM}, and that these homologies are isomorphic as $A^e$-modules.

\subsection{Bosonization and another approach to  Theorem \ref{thm:tcy}} \label{subsec:bozo} 
In this subsection we propose another approach to the proof of Theorem \ref{thm:tcy}, making use of results of Kr\"ahmer \cite{Kra12}. The only small but important nuance with this approach is that we have not been able to get relevant information on the group-like $g\in H$ in Theorem \ref{thm:tcy}, which is important in view of applications. 

To connect Theorem \ref{thm:tcy} and the results in \cite{Kra12}, we need the bosonization construction \cite{Rad85,Maj94}, that we recall now. Let $H$ be a coquasitriangular Hopf algebra and let $A$ be a Hopf algebra in $\mathcal M^H$. We retain the previous Sweedler notation:
\[\Delta_A(a) = a_{[1]} \otimes a_{[2]}, \quad \Delta_H(x) = x_{(1)}\otimes x_{(2)}, \quad  \alpha(a) = a_{(0)} \otimes a_{(1)}.\]
The \textsl{bosonization} $H\#A$ is then the ordinary Hopf algebra that has $H\otimes A$ as underlying vector space, has the unit and counit of the ordinary tensor product of algebras and coalgebras, and comultiplication, product and antipode given by 
\[ x\#a \cdot y\# b = \textbf{r}(a_{(1)},y_{(2)}) xy_{(1)} \# a_{(0)}b, \quad \Delta(x\#a)= (x_{(1)} \# a_{[1](0)}) \otimes (x_{(2)}a_{[1](1)} \# a_{[2]}) \]
\begin{align*} S(x\#a) &= (1\#S_A(a_{(0)}))\cdot (S_H(xa_{(1)})\#1) \\
& = \textbf{r}\left(a_{(1)}, S_H(x_{(1)}a_{(2)})\right) S_H(x_{(2)}a_{(3)})\# S_A(a_{(0)})
\end{align*}
In particular, this gives
\[  S(1\#a) = \textbf{r}\left(a_{(1)}, S_H(a_{(2)})\right) S_H(a_{(3)})\# S_A(a_{(0)})\]
and 
\[ S^2(x\#a) = \textbf{r}\left(a_{(1)}, S_H(a_{(2)})\right) S_H^2(x)\# S_A^2(a_{(0)})\]
The algebra embedding $A \to H\#A$, $a\to 1\#a$, realizes $A$ as a right coideal subalgebra $A\subset H\#A$, since 
\[\Delta(1\#a)= (1 \# a_{[1](0)}) \otimes (a_{[1](1)} \# a_{[2]}).\]
The Hopf algebra $H\#A$ has bijective antipode since $S_A$ is bijective, $H\#A$ is free as a right $A$-module hence is faithfully flat, while the assumption that  $_\varepsilon k$ is of type FP  in $_A\mathcal M^H$ ensures by Theorem \ref{thm:smoothness} that $A$ is smooth, therefore we can use the results in \cite{Kra12}. In particular \cite[Theorem 6]{Kra12} and the reasoning inside the proof of \cite[Theorem 1]{Kra12} (p. 249) give  $H^{i}(A, \,_{A}A \otimes A_{A}) = \{0\}$ for $i \not = n$. It then  remains to study the $A$-bimodule structure of $H^{n}(A, \,_{A}A \otimes A_{A})$.

Let $\psi : A\to k$ be the algebra map corresponding to the $A$-module structure on the one-dimensional space $\ext_A^n(_\varepsilon k, A)$. By \cite[Corollary 2]{Kra12}, the $A$-module $\ext_A^n(_\varepsilon k, A)$ is an object in a certain category of "twisted'' Hopf modules $_A\mathcal M_{A,S^2}^{H \# A}$. Inspecting the condition that defines the objects of  $_A\mathcal M_{A,S^2}^{H \# A}$ \cite[p. 246]{Kra12}, we get  $\psi(a_{(0)}) a_{(1)} = \psi(a)1$ for any $a\in A$ (while there is no condition on the group-like $g \in H$ corresponding to the $H$-comodule structure). Following \cite[Lemma 7]{Kra12}, we define the algebra map
\begin{align*}
 \sigma : A & \longrightarrow H\#A \\
a & \longmapsto \psi(a_{[1](0)}) S^2(a_{[1](1)} \# a_{[2]})
\end{align*}
We have, using the equality $\psi(a_{(0)}) a_{(1)} = \psi(a)1$, 
\begin{align*}\sigma(a) & = \psi(a_{[1](0)})\textbf{r}\left(a_{[2](1)}, S_H(a_{[2](2)})\right) S^2_H(a_{[1](1)})\# S^2_A(a_{[2](0)})\\
& = \psi(a_{[1]})\textbf{r}\left(a_{[2](1)}, S_H(a_{[2](2)})\right) 1\# S^2_A(a_{[2](0)}) 
\end{align*}
so that $\sigma$ defines an automorphism of $A$ (as predicted by \cite[Corollary 4]{Kra12}). 
Consider now the $A$-bimodule $(H\#A)_\sigma$. By \cite[Theorem 7]{Kra12}, the $A$-bimodule $H^{n}(A, \,_{A}A \otimes A_{A})$ is isomorphic to the sub-A-bimodule
\[X = \{ g\# a, a \in A\} \subset (H\#A)_\sigma.\]
It is an immediate verification that $X$ is isomorphic to $_\alpha A _\sigma$, where $\alpha$ is the automorphism of $A$ defined by $\alpha(a) = \textbf{r}(a_{(1)},g)a_{(0)}$, and hence to $A_{\alpha^{-1}\sigma}$. Since $\alpha^{-1}(a) =\textbf{r}(a_{(1)},g^{-1})a_{(0)}$, it is a direct verification to check that $\mu = \alpha^{-1} \sigma$ has the announced form.

\section{Example : two-parameter braided quantum \texorpdfstring{${\rm SL}_2$}{SL2}}\label{sec:twop}

In this section we apply our various results to the coordinate algebra on  the two-parameter braided quantum group SL$_{2}$.

\subsection{Two-parameter braided quantum \texorpdfstring{${\rm SL}_2$}{SL2}}

\bd \label{def:OO}
	Let $p, q \in k^{*}$. The algebra $\OO$ is the algebra presented by generators $a, b, c, d$ subject to the relations
\[ba = qab, ca = pac, db = qbd, dc = pcd, bc=cb\]
\[ad - p^{-1}bc = 1 = da - qbc.\]
\ed

For $p=q$ the algebra $\OO$ is the classical coordinate algebra on the quantum group ${\rm SL}_2$, and has a well-known ordinary Hopf algebra structure. Generalizing this for $p\not=q$, we construct a braided Hopf algebra structure on $\OO$.
For this, the following first piece of structure on $\OO$ is an immediate verification.

\bp\label{alge}
The algebra $\OO$ has a $k\Z$-comodule algebra structure whose coaction is defined by the algebra map
\begin{align*}
\delta: \OO &\longrightarrow \,\OO \otimes k\Z\\
a, \ b, \ c, \ d &\longmapsto a \otimes 1, \ b\otimes z^{-1}, \ c\otimes z, \ d\otimes 1
\end{align*}
where $z$ is a fixed generator of the infinite cyclic group $\mathbb Z$.
\ep

From now on, we denote $A = \OO$, and we work  in the abelian $k$-linear braided category  $\mathcal M^{k\mathbb Z,\xi}$, with $\xi=p^{-1}q$, with its braiding denoted by $c$, see Example \ref{ex:gradedbraided}.

\bp
There exist algebra morphisms
\begin{align*}
\Delta : A &\longrightarrow A \otimes_{c} A\\
 \begin{pmatrix}
a &b\\
c &d
\end{pmatrix} & \longmapsto
 \begin{pmatrix}
a \otimes a + b \otimes c &
a \otimes b + b \otimes d \\
c \otimes a + d \otimes c &
c \otimes b + d \otimes d
\end{pmatrix}
\end{align*}
and
\begin{align*}
\varepsilon: \hspace*{0.3cm}A &\longrightarrow \hspace*{0.3cm}k, &S: \hspace*{0.5cm}A &\longrightarrow \hspace*{0.5cm}A^{{\rm op},c}\\
\begin{pmatrix}
a &b\\
c &d
\end{pmatrix}
&\mapsto
\begin{pmatrix}
1 & 0 \\
0 & 1
\end{pmatrix}
&\begin{pmatrix}
a &b\\
c &d
\end{pmatrix}
&\longmapsto \begin{pmatrix}
d & -qb \\
-p^{-1}c & a
\end{pmatrix}
\end{align*}
that endow $A$ with a Hopf algebra structure in the braided category $\m^{k\Z,\xi}$.
\ep

\bpf
It is immediate to check that $\varepsilon$ is a well-defined algebra map,  and is a morphism in  $\m^{k\Z, \xi}$. Consider now the algebra map
\begin{align*}
\Delta_{0} : k\langle a, b, c, d \rangle &\longrightarrow A \otimes_{c} A\\
\begin{pmatrix}
a &b\\
c &d
\end{pmatrix} & \longmapsto
 \begin{pmatrix}
a \otimes a + b \otimes c &
a \otimes b + b \otimes d \\
c \otimes a + d \otimes c &
c \otimes b + d \otimes d
\end{pmatrix}
\end{align*}
where $A \otimes_{c} A$ is the braided tensor product of the algebra $A$ with itself, see Subsection \ref{sub:braidedHopf}.
In view of the structure of the braiding of $\m^{k\Z,\xi}$, we have, in $A \otimes_{c} A$, for arbitrary elements $x, y \in A$ and  $z, t \in \{a, b, c, d\}$ with $(z, t) \notin \{(b, b), (b, c), (c, b), (c, c)\}$,
\begin{equation*}
    \begin{cases}
      (x \otimes b).(b \otimes y) = p^{-1}qxb \otimes by\\
      (x \otimes b).(c \otimes y) = pq^{-1}xc \otimes by\\
      (x \otimes c).(b \otimes y) = pq^{-1}xb \otimes cy\\
      (x \otimes c).(c \otimes y) = p^{-1}qxc \otimes cy\\
      (x \otimes z).(t \otimes y) = xt \otimes zy
    \end{cases}.
\end{equation*}
We now have, in $A \otimes_{c} A$,
\begin{align*}
\Delta_{0}(ba) &= (a \otimes b + b \otimes d).(a \otimes a + b \otimes c)\\
&=(a \otimes b).(a \otimes a) + (a \otimes b).(b \otimes c)+ (b \otimes d). (a \otimes a) + (b \otimes d). (b \otimes c)\\
 &= a^{2} \otimes ba + p^{-1}qab \otimes bc + ba \otimes da + b^{2} \otimes dc\\
&= qa^{2} \otimes ab + p^{-1}qab \otimes bc + ba \otimes da + b^{2} \otimes dc.
\end{align*}
On the other hand, 
\begin{align*}
\Delta_{0}(ab) &= (a \otimes a + b \otimes c).(a \otimes b + b \otimes d)\\
&=(a \otimes a).(a \otimes b) + (b \otimes c).(a \otimes b)+ (a \otimes a).(b \otimes d)+ (b \otimes c).(b \otimes d)\\
&= a^{2} \otimes ab + ba \otimes cb + ab \otimes ad + pq^{-1}b^{2} \otimes cd\\
&= a^{2} \otimes ab + ba \otimes bc + ab \otimes ad + q^{-1}b^{2} \otimes dc\\
&= a^{2} \otimes ab + ba \otimes bc + q^{-1}ba \otimes (da - qbc + p^{-1}bc) + q^{-1}b^{2} \otimes dc\\
&= a^{2} \otimes ab + q^{-1}ba \otimes da + p^{-1}ab \otimes bc + q^{-1}b^{2} \otimes dc
\end{align*}
and hence $\Delta_{0}(ba) = q\Delta_{0}(ab)$. Next, we also have
\begin{align*}
&\Delta_{0}(ad -p^{-1}bc) = (a \otimes a + b\otimes c).(c \otimes b + d\otimes d) - p^{-1}(a \otimes b + b \otimes d).(c \otimes a + d \otimes c)\\
&= (a \otimes a). (c \otimes b) + (a \otimes a).(d \otimes d) + (b \otimes c).(c \otimes b) + (b \otimes c).(d \otimes d)\\
& \hspace*{0.5cm}- p^{-1}(a \otimes b).(c \otimes a) - p^{-1}(a \otimes b). (d \otimes c) - p^{-1}(b \otimes d). (c \otimes a) - p^{-1}(b \otimes d). (d \otimes c)\\
&= ad \otimes ad + p^{-1}qbc \otimes cb + bd \otimes cd - ad \otimes (ad - 1) - p^{-1} bc \otimes(1 + qbc) - bd \otimes cd\\
&= ad \otimes 1 - p^{-1}bc \otimes 1 = 1 \otimes 1 = \Delta_{0}(1).
\end{align*}
Similar calculations, which we leave to the reader, show that \[\Delta_{0}(ca) = p\Delta_{0}(ac), \,\Delta_{0}(db) = q\Delta_{0}(bd), \,\Delta_{0}(dc) = p\Delta_{0}(cd), \,\Delta_{0}(bc) = \Delta_{0}(cb)\] and $$\Delta_{0}(da - qbc) = \Delta_{0}(1).$$ Therefore we obtain  the announced morphism of algebras $\Delta: A \to A\otimes_{c} A$, which is easily seen to be a morphism in  $\m^{k\Z, \xi}$. 

In $A^{{\rm op}, c}$, we have for $x, y \in \{a, b, c, d\}$ with $(x, y) \notin \{(b, b), (b, c), (c, b), (c, c)\}$,
\[x \cdot y =yx\]
while
\[b\cdot b = p^{-1}q b^2, \ b\cdot c = pq^{-1} bc, \ c\cdot b = pq^{-1}bc, \ c\cdot c = p^{-1}qc^2.\]
From this, it is a direct verification to define the algebra map $S$ as in the statement, and to check that $S$ is  a morphism in  $\m^{k\Z, \xi}$. It is then immediate that $\Delta, \varepsilon, S$ satisfy the Hopf algebra axioms on the generators of $A$, and hence on the whole of $A$ since these are algebra maps.  
\epf

We call the braided Hopf algebra $\OO$ \textsl{the coordinate algebra on the two-parameter braided quantum group ${\rm SL}_2$}.

The following are straightforward generalizations of classical resuls in the case $p=q$.

\bp\label{prop:diamond2}
The set $\{a^{i}b^{j}c^{k} \mid i, j, k \in \N\} \cup \{b^{j}c^{k}d^{l} \mid j, k \in \N, l \in \N^{*}\}$ is a vector space basis of $\OO$.
\ep

\bpf The result is obtained using the diamond lemma,  as in \cite[4.1.5]{KSBook97}.
\epf

\bp \label{domain}
The  algebra $A = \OO$ and its quotients $A /(b)$, $A /(c)$ and $A /(b, c)$ are domains.
\ep

\bpf
It is well-known that $A /(b)$, $A /(c)$ and $A /(b, c)\simeq k \mathbb Z$ are domains. For $A$, we can proceed exactly as in \cite[I.1]{BG02}. We consider first the algebra $A_{p,q}$ presented by generators 
 $a, b, c, d$ and relations
\[ba = qab, \,ca = pac, \,bc = cb, \,
db = qbd, \,dc = pcd, \,ad - da = (p^{-1} - q)bc\]
and remark that $A_{p,q}$ is an iterated Ore extension, hence is a domain.
Put  $D_{p, q} = ad - p^{-1}bc = da - qbc$. We have $D_{p, q} \in Z(A_{p,q})$ and  
$\OO = A_{p,q} /(D_{p, q} - 1)$. Consider then the localization $A_{p,q}[D^{-1}_{p,q}]$ with respect to the central regular element $D_{p, q}$. We then have an algebra isomorphism
\begin{align*}
f: A_{p,q}[D_{p, q}^{-1  }] &\longrightarrow \OO \otimes k[z, z^{-1}]\\
a,b,c,d &\longmapsto a\otimes z, b\otimes z, c\otimes 1, d\otimes 1
\end{align*}
and since $A_{p,q}[D^{-1}_{p,q}]$ is a domain, so is $A=\OO$.
\epf

\subsection{Relation with previous literature}\label{sub:related}
To the best of our knowledge,  Definition \ref{def:OO} seems to be the first formal occurrence of the braided Hopf algebra $\OO$ under this form in the literature. There are however some related known objects, that we briefly mention and discuss in this subsection.  

 First, assume that $k= \mathbb C$ and, for $q\in \mathbb C^*$, consider $C({\rm SU}_q(2))$, the algebra of continuous functions on the braided compact quantum group ${\rm SU}_q(2)$ defined by Kasprzak, Meyer, Roy and Woronowicz in \cite{KMRW16}: $C({\rm SU}_q(2))$ is the universal $C^*$-algebra generated by elements $\alpha$, $\gamma$ satisfying the relations
\[\alpha^*\alpha + \gamma^*\gamma = 1, \ \alpha \alpha^*+ |q|^2 \gamma \gamma^* =1, \
\gamma\gamma^*=\gamma^*\gamma, \ \alpha\gamma = \overline{q}\gamma \alpha, \ \alpha \gamma^* = q \gamma^*\alpha.
\]
For $p=\overline{q}$, it is an immediate verification to check that 
$\mathscr{O}_{\overline{q} , q}({\rm SL}_2(\mathbb C))$ 
has a $*$-algebra structure given by
\[a^*=d, \ b^*=-\overline{q}c, \ c^*=-q^{-1}b, \ d^*=a \]
and that there exists a $*$-algebra map with dense image
\begin{align*}
\mathscr{O}_{\overline{q}^{-1} , q^{-1}}({\rm SL}_2(\mathbb C))  &\longrightarrow C({\rm SU}_q(2)) \\
\begin{pmatrix}
 a & b\\
c & d
\end{pmatrix}
&\longmapsto
\begin{pmatrix}
 \alpha &-q\gamma^*\\
\gamma & \alpha^*
\end{pmatrix}.
\end{align*}
Hence $\mathscr{O}_{\overline{q}^{-1} , q^{-1}}({\rm SL}_2(\mathbb C)) $ might be called the coordinate algebra on the braided compact quantum group ${\rm SU}_q(2)$, and denoted by $\mathscr{O}({\rm SU}_q(2))$.

It is shown in \cite{HN24} that $\mathscr{O}({\rm SU}_q(2))$ can be constructed from the ordinary Hopf $*$-algebra $\mathscr{O}({\rm SU}_{q'}(2))$ for some $q'\in \mathbb R$ via Majid's transmutation operation \cite{Maj93}. Similarly it is also possible to construct $\OO$ from the ordinary Hopf algebra $\mathscr{O}_{q'}({\rm SL}_2(k))$ via transmutation. We will not give the details here, and instead briefly explain how $\OO$ naturally occurs in the setting of 
the more familiar Takeuchi's two-parameter quantum ${\rm GL}_2$  \cite{Tak90}.

Recall \cite{Tak90} that for $p,q\in k^*$, the algebra $\mathscr{O}_{q , p}({\rm GL}_2(k)) $ is presented by generators $a,b,c,d, \delta^{-1}$ and relations
\[ba = qab, \ ca = pac, \ db = pbd, \ dc = qcd, \ pbc=qcb\]
\[da-ad = (p-q^{-1})bc, \ (ad - q^{-1}bc)\delta^{-1} = 1 = \delta^{-1}(ad - q^{-1}bc).\]
To connect $\OO$ and $\mathscr{O}_{q , p}({\rm GL}_2(k)) $,  we use the bosonization construction recalled in Subsection \ref{subsec:bozo}.

\bp
The bosonization $k\mathbb Z \# \OO$ is isomorphic to $\mathscr{O}_{p , q}({\rm GL}_2(k))$.
\ep 

\bpf
It is a straightforward verification to check that there exists an algebra map
\begin{align*}
 f : \mathscr{O}_{p , q}({\rm GL}_2(k)) & \longrightarrow k\mathbb Z \# \OO \\
\begin{pmatrix}
 a & b \\ c & d
\end{pmatrix}, \ \delta^{-1}
& \longmapsto 
\begin{pmatrix}
 z\# a & z \# b \\ 1 \# c & 1\# d
\end{pmatrix}, \ z^{-1}\#1
\end{align*}
which is an isomorphism, and a coalgebra map as well.
\epf

It follows from this result that $\OO$ can be recovered as the subalgebra of coinvariants associated with a Hopf algebra projection on  $\mathscr{O}_{p , q}({\rm GL}_2(k))$, see \cite{Rad85}.

\subsection{Homological properties of \texorpdfstring{$\OO$}{OO}}

In this subsection we compute the cohomological dimension of $A=\OO$, prove that $A$ is smooth and twisted Calabi-Yau. We begin by constructing an suitable resolution for the trivial module, generalizing that found by  Hadfield and Kr\"ahmer \cite{HK05} in the $p=q$ case.

\bp \label{prop:reso}
The following is a resolution of $_\varepsilon k $ by free left $A$-modules:
\begin{center}
\begin{tikzcd}
(P_{\ast}): \hspace*{0.5cm}0 \ar[r] &A \ar[r, "\phi_{3}"] &A^{3} \ar[r, "\phi_{2}"] &A^{3} \ar[r, "\phi_{1}"] &A \ar[r, "\varepsilon"] &k \ar[r] &0
\end{tikzcd}
\end{center}
where $\phi_{1}(x, y, z) = x(a - 1) + yb + zc$, $\phi_{3}(x) = x(c, -b,  pqa -1)$ and
\begin{equation*}
\phi_{2}(x, y, z) = (x, y, z)
\begin{pmatrix}
b & 1 - qa & 0 \\
c & 0 & 1 - pa \\
0 & c & -b
\end{pmatrix}.
\end{equation*}
\ep

\bpf
The maps $\phi_{1}, \phi_{2}, \phi_{3}$ are clearly $A$-linear, and that
$(P_{*})$ is a complex follows from the matrix computations
\[
\begin{pmatrix}
b & 1 - qa & 0 \\
c & 0 & 1 - pa \\
0 & c & -b
\end{pmatrix} 
\begin{pmatrix}
 a-1\\
b\\
c
\end{pmatrix}
= \begin{pmatrix}
   0 \\ 0 \\ 0
  \end{pmatrix},
\quad
 (c , -b , pqa-1)
\begin{pmatrix}
b & 1 - qa & 0 \\
c & 0 & 1 - pa \\
0 & c & -b
\end{pmatrix} =
(0, 0, 0).
\]

The injectivity of $\phi_{3}$ follows from the fact that $A$ is a domain. Moreover $\varepsilon$ is surjective and 
it is a standard verification that ${\rm Ker}(\varepsilon)$ is generated as a left $A$-module by $a-1$, $b$ and $c$, so ${\rm Ker}(\varepsilon) = \text{Im}(\phi_{1})$.

Let $X = (x, y, z) \in \text{Ker}(\phi_{1})$, we have $x(a - 1) + yb + zc = 0$
and hence $x(a - 1) = 0$ in the domain $A/(b, c)$, so that $x = 0$ in $A/(b, c)$. By using the relations that define $A$, we see that $bA = A b$ and $cA = A c$, hence we can write $x = \alpha b + \beta c$ for some $\alpha, \beta \in A$. Then we have 
\begin{align*} 
(x, y, z) - \phi_{2}(\alpha, \beta, 0) &= (x, y, z) - (\alpha b + \beta c, \alpha(1 - qa), \beta(1 - pa))\\
&= (0, y - \alpha(1 - qa), z - \beta(1 - pa)).
\end{align*}
Thus, to show that $X \in \text{Im}(\phi_{2})$, we can assume that $X = (0, y, z)$ for some $y, z \in A$. Then we have $yb + zc = 0$ which gives $yb = 0$ in the domain $A/(c)$. Hence, $y = \gamma c$ and $z = - \gamma b$ for some $\gamma \in A$. It follows that
$X = (0, \gamma c, -\gamma b) = \phi_{2}(0, 0, \gamma)$, and therefore ${\rm Ker}(\phi_{1}) = {\rm Im}(\phi_{2})$.

Let $X = (x, y, z) \in \text{Ker}(\phi_{2})$. Then 
\begin{equation}
    \begin{cases}
      xb + yc = 0\\
      x(1 - qa) + zc = 0\\
      y(1 - pa) -zb = 0
    \end{cases}\,
\end{equation}
and $x(1 - qa) = 0$ in the domain $A /(c)$, hence $x = 0$ in $A /(c)$,  which implies $x = x'c$ for $x' \in A$. Since $xb + yc = 0$ and $A$ is a domain, we obtain $y = -x'b$. We have now
  \[  \begin{cases}
      x'c(1 - qa) + zc = 0\\
      -x'b(1 - pa) -zb = 0
    \end{cases}\,\]
from which it follows that $z = x'(pqa - 1)$ since $A$ is a domain.  We get $X = x'(c, - b, pqa -1) = \phi_{3}(x')$.  Therefore, Ker($\phi_{2}$) $=$ Im($\phi_{3}$), and we conclude that the sequence $(P_{*})$ is exact.
\epf

We now use the previous resolution to compute some $\ext$ spaces. 
For $t \in k^*$, it is straightforward to check that there exists an algebra map
	\begin{align*}
		\varepsilon_{t} : A &\longrightarrow k\\
		\begin{pmatrix}
			a & b  \\
			c & d
		\end{pmatrix}
		&\longmapsto 
		\begin{pmatrix}
			t & 0  \\
			0 & t^{-1}
		\end{pmatrix}
	\end{align*}
with $\varepsilon_1=\varepsilon$.

\bp\label{prop:extcomp}
For $p,q\in k^*$, put $t=(pq)^{-1}$. The vector spaces $\ext^*_A(_\varepsilon k, {_{\varepsilon_t}k})$ are described by the following table.
\begin{center}
\begin{tabular}{|c|c|c|c|c|}
\hline
& $pq\not=1$, $p \not=1$, $q\not=1$ & $pq\not=1$, $1\in \{p,q\}$ & $pq=1$, $p\not=1$, $q\not=1$ & $p=q=1$ \\ 
\hline
$\ext^0_A(_\varepsilon k, {_{\varepsilon_t}k})$ & $\{0\}$ & $\{0\}$ & $k$ & $k$ \\
\hline
$\ext^1_A(_\varepsilon k, {_{\varepsilon_t}k})$ & $\{0\}$ & $k$ & $k$ & $k^3$\\
\hline
$\ext^2_A(_\varepsilon k, {_{\varepsilon_t}k})$ & $k$     & $k^2$ & $k$ & $k^3$ \\
\hline
$\ext^3_A(_\varepsilon k, {_{\varepsilon_t}k})$ & $k$ & $k$ & $k$ & $k$ \\
\hline
\end{tabular}
 \end{center}
Moreover, for any algebra map $\psi : A \to k$ with $\psi\not=\varepsilon_t$, we have $\ext^3_A(_\varepsilon k, {_\psi  k})=\{0\}$. 
\ep 

\bpf
Applying $\Hom_A(-, {_\psi k})$ to the resolution in Proposition \ref{prop:reso}, we see, after some standard identifications, that  $\ext^*_A(_\varepsilon k, {_\psi  k})$  is the cohomology of the complex
\[
 0 \to k \overset{f_1}\longrightarrow k^3 \overset{f_2}\longrightarrow k^3 \overset{f_3}\longrightarrow k \to 0
\]
where $f_1(x)=((\psi(a)-1)x, \psi(b)x,\psi(c)x)$, $f_3(x,y,z) = x\psi(c)-y\psi(b) + (pq\psi(a)-1)z$, and 
\begin{equation*}
f_{2}(x, y, z) = (x, y, z)
\begin{pmatrix}
\psi(b) & \psi(c) & 0 \\
1-q\psi(a) & 0 & \psi(c) \\
0 & 1-p\psi(a) & -\psi(b)
\end{pmatrix}.
\end{equation*}
We thus see that if $\psi(b)\not=0$ or $\psi(c)\not=0$ or $\psi(a)\not=(pq)^{-1}$, we have $\ext^3_A(_\varepsilon k, {_\psi  k})=\{0\}$. Otherwise $\psi=\varepsilon_t$, and the announced results are immediate.
\epf

\bc\label{cor:cd}
We have $\cd(\OO)=3$ for any $p,q\in k^*$.
\ec

\bpf
We have $\pd_A(_\varepsilon k)\leq 3$ by Proposition \ref{prop:reso} and $\pd_A(_\varepsilon k)\geq 3$ by Proposition \ref{prop:extcomp}, hence $\pd_A(_\varepsilon k)= 3$. We conclude by Theorem \ref{thm:equaldimen}.
\epf

We now want to prove that $A=\OO$ is smooth using Theorem \ref{thm:smoothness}. For this we interpret the resolution in Proposition \ref{prop:reso} as a resolution in $_A\mathcal M^{k\mathbb Z}$.

\bp\label{prop:resoequi}
Let $V$, $W$ be the $3$-dimensional $k\mathbb Z$-comodules with  respective bases  $(e_{1}, e_{2}, e_{3})$ and $(e'_{1}, e'_{2}, e'_{3})$, and coactions defined by 
\begin{align*}
\delta_{V}: V &\longrightarrow V \otimes k\Z &\delta_{W}: W &\longrightarrow W \otimes k\Z\\
e_{1},e_2, e_3 &\longmapsto e_{1} \otimes 1, e_2\otimes z^{-1}, e_3\otimes z &e'_{1},e'_2, e'_3 &\longmapsto e'_{1}\otimes z^{-1}, e'_2\otimes z, e'_3\otimes 1.
\end{align*}
Then we have a resolution of $_\varepsilon k$ by free $A$-modules in $\MM^{k\Z}$
\[0 \to A \overset{\phi_3}\longrightarrow A\otimes W \overset{\phi_2}\longrightarrow A \otimes V \overset{\phi_1}\longrightarrow A \overset{\varepsilon}\to k \to0\]
where 
\begin{align*}
\phi_{1}&(x \otimes e_{1} + y \otimes e_{2} + z \otimes e_{3}) = x(a - 1) + yb + zc \\
\phi_{3}&(x) = xc \otimes e'_{1} - xb \otimes e'_{2} + x(pqa - 1) \otimes e'_{3}\\
\phi_{2}&(x \otimes e'_{1}) = xb \otimes e_{1} + x(1 - qa) \otimes e_{2}, \quad
\phi_{2}(x \otimes e'_{2}) = xc \otimes e_{1} + x(1 - pa) \otimes e_{3}\\
\phi_{2}&(x \otimes e'_{3}) = xc \otimes e_{2} - xb \otimes e_{3}.
\end{align*}
In particular $_\varepsilon k$ is of type FP in $_A\MM^{k\Z}$.
\ep

\bpf
After the obvious identifications between $A\otimes V$ and $A^3$ and between $A\otimes W$ and $A^3$, the above sequence is the same as the one in Proposition \ref{prop:reso}, and hence is exact. It is also an immediate verification that the above maps are $k\mathbb Z$-colinear.
\epf

\sloppy
Combining Corollary \ref{cor:cd}, Theorem \ref{thm:smoothness} and Proposition  \ref{prop:resoequi}, we  obtain that $\OO$ is smooth, with $\cd(\OO)=3$.

We can also use Proposition \ref{prop:resoequi} to compute some $\ext$ spaces in $_A\mathcal M^{k\mathbb Z}$.

\bp \label{prop:extequi}
For $p,q\in k^*$, put $t=(pq)^{-1}$.  We have
\[\ext^i_{_A\mathcal M^{k\mathbb Z}}(_\varepsilon k, {_{\varepsilon_t}k}) \simeq
\begin{cases}
	k \ \text{if} \ i \in \{2,3\}, \ \text{or} \ i \in \{0,1\} \ \text{and} \ pq=1\\
	0 \ \text{otherwise}.
\end{cases},
\]
\ep

\bpf
Let $\psi : A \to k$ be an algebra map that is also a map of $k\mathbb Z$-comodules.
Applying $\Hom_{_A\mathcal M^{k\mathbb Z}}(-, {_\psi k})$ to the resolution in Proposition \ref{prop:resoequi} we see, after some standard identifications, that  $\ext^*_{_A\mathcal M^{k\mathbb Z}}(_\varepsilon k, {_\psi  k})$  is the cohomology of the complex
\[
 0 \to k \overset{f_1}\longrightarrow k \overset{f_2}\longrightarrow k \overset{f_3}\longrightarrow k \to 0
\]
where $f_1(x)=(\psi(a)-1)x$, $f_2(x)=0$ and $f_3(x) =  (pq\psi(a)-1)x$. The announced result is then a direct verification.
\epf

Our next aim is to prove that $\OO$ is a twisted Calabi-Yau algebra.

\bp \label{prop:ext(k,A)}
We have $\e^{n}_{A}(_{\varepsilon}k, A) = 0$ if $n\neq 3$, and  $\e_{A}^{3}(_{\varepsilon}k, A) \simeq k_{\varepsilon_{(pq)^{-1}}}$ as right $A$-modules.
\ep

\bpf
Applying the functor $\text{Hom}_{A}(-, A)$ to the resolution $(P_{*})$ in Proposition \ref{prop:reso} and using some standard isomorphisms, we obtain the complex of right $A$-modules $\text{Hom}_{A}(P_{*}, A)$: 
\begin{center}
\begin{tikzcd}
0 \ar[r] &A \ar[r, "\phi^{*}_{1}"] &A^{3} \ar[r, "\phi^{*}_{2}"] &A^{3} \ar[r, "\phi^{*}_{3}"] &A \ar[r] &0
\end{tikzcd}
\end{center}
where 
$\phi^{*}_{1}(x) = \big(a - 1, b, c\big)x; \, \phi^{*}_{3}(x, y, z) = cx - by + (pqa - 1)z$ and \begin{equation*}
\phi^{*}_{2}(x, y, z) =
\left[\begin{pmatrix}
b & 1-qa & 0 \\
c & 0 & 1 -pa \\
0 & c & -b
\end{pmatrix}\begin{pmatrix}
x\\
y\\
z
\end{pmatrix}\right]^{t}.
\end{equation*}
We have $\e_{A}^{n}(_{\varepsilon}k, A)\simeq \text{Ker}(\phi^{*}_{n + 1}) / \text{Im}(\phi^{*}_{n})$ and
verifications are straightforward using the same arguments as in Proposition \ref{prop:reso}, especially the fact that $A, A /(b), A/(c)$ and $A /(b, c)$ are domains. We leave the verifications to the reader, except for the degree $3$ where we want to obtain explicitely the $A$-module structure. 

Let $x \in A$, then $x$ is a linear combination of elements of the forms $a^{i}b^{j}c^{k}$ and $b^{j}c^{k}d^{\ell}$. In $A / \text{Im}(\phi^{*}_{3})$, we have $b = c = 0$, $a = (pq)^{-1}1$ and $d = (pq)1$. Hence (the class of) $1$ generates $A / \text{Im}(\phi^{*}_{3})$, and we have to check that
 $1 \notin \text{Im}(\phi^{*}_{3})$. If $1 \in \text{Im}(\phi^{*}_{3})$, there exists $(x, y, z) \in A^{3}$ such that $1 = cx - by +(pqa - 1)z$, and we have $1 = (pqa - 1)z$ in the Laurent polynomial ring $A /(b,c) = k[a, a^{-1}]$, which is impossible. Thus $1 \notin \text{Im}(\phi^{*}_{3})$ and hence, $\e_{A}^{3}(_{\varepsilon}k, A) \simeq k$ as vector spaces, and it is clear from the previous relations that
$\e_{A}^{3}(_{\varepsilon}k, A)\simeq k_{\varepsilon_{(pq)^{-1}}}$ as right $A$-modules.
\epf

\begin{theorem}\label{thm:OOtcy}
 The algebra $\OO$ is twisted Calabi-Yau of dimension $3$, with Nakayama automorphism defined by
\begin{align*}
\mu : \quad \OO \quad &\longrightarrow \OO \\
\begin{pmatrix}
a & b\\
c & d
\end{pmatrix} &\longmapsto
\begin{pmatrix}
(pq)^{-1}a &  b\\
c & (pq) d
\end{pmatrix}.
\end{align*}
\end{theorem}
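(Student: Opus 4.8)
The plan is to apply Theorem \ref{thm:tcy} to $A=\OO$, viewed as a Hopf algebra in the braided category $\MM^{k\Z,\xi}$ of Example \ref{ex:gradedbraided} with $\xi=p^{-1}q$, attached to the cosemisimple coquasitriangular Hopf algebra $H=k\Z$ whose universal $r$-form is $\textbf{r}(z^m,z^n)=\xi^{mn}$, and then to read off the Nakayama automorphism from the resulting formula with $n=3$. First I would check the three hypotheses. The antipode $S_A$ is bijective, as a direct computation shows that $S_A^2$ acts on the generators by $a\mapsto a$, $d\mapsto d$, $b\mapsto q^2b$, $c\mapsto p^{-2}c$, which respects the defining relations and hence is an automorphism of $A$. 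That $_\varepsilon k$ is of type FP in $_A\MM^{k\Z}$ is exactly Proposition \ref{prop:resoequi}, while the vanishing $\e_A^i(_\varepsilon k,A)=\{0\}$ for $i\neq 3$ together with $\dim_k\e_A^3(_\varepsilon k,A)=1$ is Proposition \ref{prop:ext(k,A)}. Theorem \ref{thm:tcy} then yields at once that $\OO$ is twisted Calabi-Yau of dimension $3$, in accordance with $\cd(\OO)=3$ from Corollary \ref{cor:cd}.

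The next step is to identify the two data entering the formula for $\mu$. By Proposition \ref{prop:ext(k,A)} the right $A$-module $\e_A^3(_\varepsilon k,A)$ is $k_{\varepsilon_{(pq)^{-1}}}$, so the algebra map $\psi$ is $\varepsilon_{(pq)^{-1}}$, that is $\psi(a)=(pq)^{-1}$, $\psi(d)=pq$ and $\psi(b)=\psi(c)=0$. For the group-like $g\in k\Z$ corresponding to the $k\Z$-comodule structure of Lemma \ref{Lem:CoExt}, I would invoke Lemma \ref{lem:gp-trivial}: the resolution of Proposition \ref{prop:resoequi} is of type FP, its terms are finite relative projective, its last nonzero term is $P_3=A$, and $\e_A^3(_\varepsilon k,A)$ is one-dimensional, so $g=1$.

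With $g=1$ the formula of Theorem \ref{thm:tcy} reduces to $\mu(x)=\psi(x_{[1]})\,\textbf{r}\bigl(x_{[2](1)},S_H(x_{[2](2)})\bigr)\,S_A^2(x_{[2](0)})$. Since each generator is $k\Z$-homogeneous, the coaction on each $x_{[2]}$ is group-like valued: if $x_{[2]}$ has degree $m$ then $x_{[2](0)}=x_{[2]}$ and $x_{[2](1)}=x_{[2](2)}=z^m$, so $\textbf{r}(z^m,S_H(z^m))=\textbf{r}(z^m,z^{-m})=\xi^{-m^2}$. Using $\deg a=\deg d=0$, $\deg b=-1$, $\deg c=1$, the values of $S_A^2$ above, and evaluating on $\Delta(a)=a\otimes a+b\otimes c$, $\Delta(b)=a\otimes b+b\otimes d$, $\Delta(c)=c\otimes a+d\otimes c$, $\Delta(d)=c\otimes b+d\otimes d$ (only the summands not annihilated by $\psi$ survive), I obtain $\mu(a)=(pq)^{-1}a$, $\mu(b)=(pq)^{-1}(pq^{-1})q^2\,b=b$, $\mu(c)=(pq)(pq^{-1})p^{-2}\,c=c$ and $\mu(d)=pq\,d$, which is precisely the stated automorphism.

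Each of these computations is short, so the only genuinely delicate point is the determination $g=1$. This is exactly what distinguishes the present self-contained argument from the bosonization approach of Subsection \ref{subsec:bozo}, in which the group-like could not be controlled; the resolution of Proposition \ref{prop:resoequi}, having its last nonzero term equal to the free module $A$ itself, is what makes Lemma \ref{lem:gp-trivial} applicable and pins down $g$.
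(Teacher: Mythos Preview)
Your proof is correct and follows essentially the same route as the paper's own argument: invoke Theorem \ref{thm:tcy} via Propositions \ref{prop:resoequi} and \ref{prop:ext(k,A)}, use Lemma \ref{lem:gp-trivial} to get $g=1$, and then evaluate the resulting formula for $\mu$ on the generators. Your write-up is in fact slightly more detailed than the paper's, as you explicitly verify the bijectivity of $S_A$ and spell out the generator-by-generator computation of $\mu$.
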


\begin{proof}
Proposition \ref{prop:resoequi} and Proposition \ref{prop:ext(k,A)} enable us to use Theorem \ref{thm:tcy} to conclude that $\OO$ is twisted Calabi-Yau of dimension $3$. Moreover the form of the resolution in Proposition \ref{prop:resoequi} gives, by Lemma \ref{lem:gp-trivial}, that the group-like $g$ occurring in Theorem \ref{thm:tcy} is trivial. Hence the Nakayama automorphism is given by
 \[\mu(x) = S_{A}^{2}(x_{[2](0)}) \varepsilon_{(pq)^{-1}}(x_{[1]})\, \textbf{r}\big[x_{[2](1)}, S_{k\Z}(x_{[2](2)})\big]\]
and the computation of its values on the generators is immediate. 
\end{proof}

We finish the paper by recording some Hochschild cohomology computations for $\OO$ when the bimodule of coefficients is one dimensional. This has some interest in connection with the probabilistic questions studied in \cite{FGT15}. We begin with a general observation.

\bp
Let $A$ be a Hopf algebra in the braided category $\MM^{H}$ of comodules over a coquasitriangular  Hopf algebra $H$. 
Let $M$ be a left $A$-module, and endow $M$ with the trivial right $A$-module structure.
Then we have
\[H^*(A,M_\varepsilon) \simeq \e_A^*({_\varepsilon k}, M).\]
\ep

\bpf
Start with an $A$-bimodule $M$ and recall the isomorphisms
\[H^*(A, M) \simeq \ext^{*}_{\,_{A}\MM^{H}}(_{\varepsilon}k, \stackon[-8pt]{$M \odot H$}{\vstretch{1.5}{\hstretch{1.8}{\widetilde{\phantom{\;\;\;\;\;\;\;\;}}}}}) \]
from Remark \ref{rem:ext}. The left $A$-module structure on $\widetilde{M\odot H}$ is given by
\[
 a\cdot (x\otimes h) = \normalfont \textbf{r}\left(a_{[2](2)}, h_{(2)}\right) a_{[1](0)}.x.S_A(a_{[2](0)}) \otimes a_{[1](1)}h_{(1)}a_{[2](1)}
\]
and if we assume that the right $A$-module structure on $M$ is trivial this gives
\[
 a\cdot(x\otimes h)= a_{[1](0)}.x \otimes a_{[1](1)}h.
\]
Hence we have $\widetilde{M_\varepsilon \odot H} = M\boxdot H$. We conclude by Proposition \ref{prop:pdHopfmod}. 
\epf

It follows that the Hochschild cohomology spaces $H^*(\OO, {_\alpha k_\varepsilon})$, for any algebra map $\alpha : \OO \to k$, can now be computed using Proposition \ref{prop:reso}. In particular, still in connection with \cite{FGT15}, we notice that 
\[H^2(\OO, {_\varepsilon k_\varepsilon})\simeq 
\begin{cases}
 \{0\} \ &\text{if $pq\not=1$, $p\not=1$, $q\not=1$,} \\
k \ & \text{if $pq=1$, $p\not=1$, $q\not=1$, or $1\in\{p,q\}$ and $pq\not=1$,} \\
k^3 \ & \text{if $p=1=q$.}
\end{cases} 
\]

\bibliographystyle{plain}
\bibliography{biblio.bib}

\end{document}